\documentclass[12pt]{iopart}
\usepackage[colorlinks=true,allcolors=blue]{hyperref}
\usepackage{amsthm}
\usepackage{graphicx}
\usepackage{iopams}
\usepackage{xr}
\externaldocument{supplemantary_material}
\usepackage{pdfpages}

\usepackage{etoolbox}
\patchcmd{\numparts}{\addtocounter{equation}{1}}{\refstepcounter{equation}}{}{}

\newcommand{\fbar}{\bar{f}}
\newcommand{\Zbar}{\bar{Z}}

\newcommand{\alphabar}{\bar{\alpha}}
\newcommand{\dd}[1]{\,\mathrm{d} #1}
\newcommand{\DD}{\mathrm{D}}
\newcommand{\defined}{:=}
\newcommand{\revdefined}{=:}
\newcommand{\Cr}{\Lambda}
\newcommand{\ccr}{\lambda}
\newcommand{\ccrt}{\ccr^*}
\newcommand{\thetat}{\theta^*}
\newcommand{\m}{\vec{m}}
\newcommand{\n}{\vec{n}}
\newcommand{\id}{\mathrm{id}}
\renewcommand{\t}{\mathrm{closed}}
\newcommand{\ev}{\lambda}

\newcommand{\D}{\mathbb{D}}
\renewcommand{\S}{\mathbb{S}^1}
\newcommand{\T}{\mathbb{T}}
\newcommand{\OT}{\T^N_{\mathrm{ordered}}}
\newcommand{\contF}{\mathcal{M}}
\newcommand{\MG}{\mathcal{G}}
\newcommand{\OCr}{V}
\newcommand{\Log}{\mathrm{Log\;}}
\newcommand{\rank}{\mathrm{rank\;}}
\renewcommand{\Re}{\mathrm{Re\;}}
\renewcommand{\Im}{\mathrm{Im\;}}

\newcommand{\abs}[1]{\left| #1 \right|}
\newcommand{\norm}[1]{\left|\left| #1 \right|\right|}
\newcommand{\mean}[1]{\left\langle #1 \right\rangle}
\renewcommand{\vec}[1]{\boldsymbol{#1}}
\newcommand{\cond}{\;;\,}
\newcommand{\set}[2]{\left\{#1\cond#2\right\}}
\newcommand{\M}[2]{G_{#1,#2}}
\newcommand{\level}[1]{\mathcal{L}_{#1}(\vec{\Cr})}
\newcommand{\diff}[2]{\frac{\mathrm{d}#1}{\mathrm{d}#2}}

\newcommand{\ie}{i.\,e.}
\newcommand{\eg}{e.\,g.}

\newtheorem{theorem}{Theorem}[section]
\newtheorem*{theoremInformal}{Theorem}

\newtheorem{lemma}[theorem]{Lemma}
\newtheorem{proposition}[theorem]{Proposition}
\newtheorem{definition}[theorem]{Definition}
\newtheorem*{remarks}{Remarks}
\newtheorem*{remark}{Remark}

\begin{document}
\title{Continua and persistence of periodic orbits in ensembles of oscillators}

\author{R Ronge$^1$, M A Zaks$^1$,	T Pereira$^{2,3}$}

\address{$^1$ Institut für Physik, Humboldt-Universit\"at zu Berlin, 12489 Berlin, Germany}
\address{$^2$ Institute of Mathematical and Computer Sciences, University of S\~ao Paulo, Brazil}
\address{$^3$ Department of Mathematics, Imperial College London, London SW7 2AZ, United Kingdom} 

\ead{robert.ronge@physik.hu-berlin.de}
	
\begin{abstract}
	Certain systems of coupled identical oscillators like the Kuramoto-Sakaguchi or the active rotator model possess the remarkable property of being Watanabe-Strogatz integrable.
	We prove that such systems, which couple via a global order parameter, feature a normally attracting invariant manifold that is foliated by periodic orbits.
	%These systems are coupled via a global order parameter and exhibit infinitely many periodic solutions. We prove that the union over the corresponding periodic orbits forms an attracting invariant manifold.
	This allows us to study the asymptotic dynamics of general ensembles of identical oscillators by applying averaging theory. For the active rotator model, perturbations result in only finitely many persisting orbits, one of them giving rise to splay state dynamics. This sheds some light on the persistence and typical behavior of splay states previously observed.
\end{abstract}

\noindent{\it Keywords\/}: Watanabe-Strogatz integrability, phase oscillators, active rotator, normal hyperbolicity

\submitto{\NL}

\maketitle

\section{Introduction}

Studies of synchronization phenomena in ensembles of self-sustained oscillators or excitable elements have found numerous applications, from physics \cite{Cawthorne_1999,Pikovsky_Rosenblum_Kurths_2001} over engineering \cite{Strogatz_2005,Motter_2013} and neurobiology \cite{Singer_1993,Fries_2015} to social science \cite{Hegselmann_2002,Pluchino_2006}. For systems of \emph{identical} elements, an important class of phase models is of the form 
\begin{eqnarray}\label{eq:GeneralSystem}
	\dot{\phi}_j = f(Z)\rme^{\rmi\phi_j} + g(Z) + \fbar(Z)\rme^{-\rmi\phi_j}
\end{eqnarray}
where $\phi_j\in\S\defined\mathbb{R}/2\pi\mathbb{Z}$ with $j=1,\dots,N$ and
\begin{eqnarray*}
	Z = \frac{1}{N} \sum_{j=1}^N \rme^{\rmi\phi_j}
\end{eqnarray*}
is the Kuramoto order parameter \cite{Kuramoto_1975}. Hence every element $\phi_j$ couples to the rest of the ensemble via $Z$. The Kuramoto-Sakaguchi model for identical phase oscillators \cite{Sakaguchi_1986} as well as the active rotator model by Shinomoto and Kuramoto \cite{Shinomoto_1986} belong to this class. 

Systems of the form \eref{eq:GeneralSystem} have found particular interest since the seminal work \cite{Watanabe_Strogatz_1994} by Watanabe and Strogatz, showing that these systems possess $N-3$ conserved quantities due to a Möbius group symmetry \cite{Marvel_Mirollo_Strogatz_2009}. Thanks to this symmetry, the dynamics of \eref{eq:GeneralSystem} is determined by just three coupled differential equations in the Möbius group parameters for any $N\geq3$, a phenomenon known as Watanabe-Strogatz (WS) integrability. 

WS-integrability has lead to the understanding of multiple phenomena in ensembles of identically driven oscillators such as Chimera states \cite{Eldering_2021}, classification of attractors \cite{Engelbrecht_Mirollo_2014}, and asymptotic stability of periodic two-cluster states \cite{Gong_2019_2,Ronge_Zaks_2021}, to name a few.
WS-integrable systems can also feature splay states, which are of special interest because of their specific spatio-temporal symmetry \cite{Aronson_1991,Mirollo_1994} and are potential candidates for attractors for systems of type \eref{eq:GeneralSystem}. %Remarkably, WS-integrable systems can give rise to so-called splay states, which are characterized by a specific spatio-temporal symmetry \cite{Aronson_1991,Mirollo_1994} and are potential candidates for attractors of the system \eref{eq:GeneralSystem} which can lead to attractor crowding \cite{Wiesenfeld_1989}.

Equation \eref{eq:GeneralSystem} serves as a phase-reduced model for ensembles of general oscillatory or excitable elements to first-order Fourier mode \cite{Acebron_2005,Stankovski_2017}. Its Möbius group symmetry and subsequent partial integrability is the result of neglecting higher modes, which would otherwise break this symmetry. It remains an open problem as to how the WS-framework can be applied to general settings.

In this work, we focus on models of the type
%\begin{numparts}\label{eq:GeneralSystemPerturbed}
\begin{equation}\label{eq:GeneralSystemPerturbed}
\eqalign{
	\dot{\phi}_j = f(Z)\rme^{\rmi\phi_j} + g(Z) + \fbar(Z)\rme^{-\rmi\phi_j} + \epsilon h(\phi_j) \\ %\label{eq:GeneralSystemPerturbeda} \\
	h(\phi_j) = \sum_{n=2}^{\infty} a_n \sin n \phi_j + b_n\cos n\phi_j %\label{eq:GeneralSystemPerturbedb}
}
\end{equation}
%\end{numparts}
where the symmetry of \eref{eq:GeneralSystem} is broken by introducing higher harmonics in the \emph{on-site} dynamics. Note that the Fourier expansion of $h$ starts with the second modes since any zeroth or first mode perturbations can be absorbed by $f$ and $g$. 

Our motivation comes from the classic model of coupled identical active rotators
\begin{equation}\label{eq:ActiveRotatorsClassic}
	\dot{\phi}_j = \omega - \sin\phi_j+ \frac{\kappa}{N}\sum_{k=1}^N\sin(\phi_k-\phi_j)
\end{equation}
with $\abs{\omega}<1$ by Shinomoto and Kuramoto \cite{Shinomoto_1986} where the common fields in \eref{eq:GeneralSystem} are
\begin{eqnarray*}
	g(Z) = \omega \\
	f(Z) = -\frac{\rmi}{2}\left(1+\kappa\Zbar\right).
\end{eqnarray*}
In particular, this model is closely related to systems of coupled theta-neurons \cite{Laing_2018,Bick_2020}. Here, when we speak of an \emph{active rotator}, we mean an element that is \emph{excitable} \cite{Izhikevich_2007} and not already an oscillator by itself. Hence the sole interest in the case $\abs{\omega}<1$. It was argued in \cite{Zaks_Tomov_2016} that for $\kappa<-\sqrt{1-\omega^2}$, the system \eref{eq:ActiveRotatorsClassic} gives rise to a continuum of periodic orbits, \ie, when the coupling becomes sufficiently \emph{repulsive}. Families of periodic orbits are in fact a common occurrence in WS-theory as already noted in \cite{Watanabe_Strogatz_1994}. 

The classic active rotator model is a first order phase reduced description for general systems of coupled \emph{class I excitable elements}, characterized by being close to a saddle-node bifurcation on an invariant circle \cite{Izhikevich_2007}. The question arises what happens to the continuous family of periodic orbits from \cite{Zaks_Tomov_2016} if we break the symmetry of \eref{eq:ActiveRotatorsClassic} by, \eg, including higher-order Fourier modes in its on-site dynamics. We therefore investigate how the degenerate dynamics of \eref{eq:ActiveRotatorsClassic} change if we consider what we call \emph{generalized} active rotators \cite{Ronge_Zaks_2021,Ronge_Zaks_2021_2} instead, for which the equations of motion read
\begin{equation}\label{eq:ActiveRotatorPerturbed}
\eqalign{	
	\dot{\phi}_j = \omega - \sin\phi_j+ \epsilon h(\phi_j) + \frac{\kappa}{N}\sum_{k=1}^N\sin(\phi_k-\phi_j) \cr
	h(\phi_j) = \sum_{n=2}^{\infty} a_n \sin n \phi_j  + b_n\cos n\phi_j
}
\end{equation}
and which are of type \eref{eq:GeneralSystemPerturbed}. Below, we outline our main results.

\section{Informal statements of main results}

We first investigate the geometry of the continuum in \sref{sec:WSCase} which results in theorem~\ref{thm:ManifoldContinuum} and proposition~\ref{prop:SplayState}. Below, we give an informal statement for the special case of the active rotator model \eref{eq:ActiveRotatorsClassic}, discussed in \sref{sec:ARWS}.

\begin{theoremInformal}[Informal statement for active rotators]
	Consider a system of classic active rotators \eref{eq:ActiveRotatorsClassic} with $\abs{\omega}<1$. For $\kappa<\kappa_0=-\sqrt{1-\omega^2}$ and sufficiently large $N$, the system possesses a continuous family of periodic orbits where the union over these orbits forms a \emph{normally attracting invariant manifold} (NAIM). One of the orbits of this continuum features splay state dynamics.
\end{theoremInformal}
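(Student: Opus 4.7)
The plan is to exploit the Watanabe-Strogatz integrability of \eref{eq:ActiveRotatorsClassic} in order to reduce the $N$-dimensional geometric question to a three-dimensional dynamical one. By the Marvel-Mirollo-Strogatz reduction \cite{Marvel_Mirollo_Strogatz_2009}, the flow factors as the action of a time-dependent element of the Möbius group $G$ (dimension 3) on a fixed base configuration $\vec{\psi}\in\T^N$, with $\vec{\psi}$ modulo $G$ providing $N-3$ conserved WS constants. Thus $\T^N$ is generically foliated by 3-dimensional group orbits, and the reduced ODE on $G$ is the same on every leaf.

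The analytic core is to show that for $\kappa<\kappa_0=-\sqrt{1-\omega^2}$ the reduced dynamics on $G$ possesses a hyperbolic, asymptotically stable limit cycle $\gamma$ of period $T$. In Möbius coordinates (e.g.~$\alpha\in\D$, $\Psi\in\S$) the reduced system is a three-dimensional ODE whose bifurcation structure is essentially the one studied in \cite{Zaks_Tomov_2016}: at $\kappa_0$ the effective excitable dynamics crosses into an oscillatory regime, giving birth to the stable branch of periodic solutions. The ``sufficiently large $N$'' hypothesis presumably enters at this step to preserve that bifurcation picture at finite $N$.

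Given $\gamma$, I would define $\contF\subset\T^N$ as the union, over all admissible base configurations $\vec{\psi}$, of the image of $\gamma$ under the corresponding Möbius group action. On each 3D group orbit this image is a 1-dimensional invariant curve on which the flow is conjugate to $\gamma$ and is therefore a genuine periodic orbit of the full system. Varying $\vec{\psi}$ over the $(N-3)$-dimensional quotient foliates $\contF$ by such orbits, producing an $(N-2)$-dimensional invariant manifold. Normal hyperbolicity then comes essentially for free: the tangent bundle of $\contF$ splits into the flow direction plus the $N-3$ conserved directions (both carrying vanishing Lyapunov exponent), while the two normal directions lie transverse to $\gamma$ inside a single group orbit and inherit the strictly negative Floquet exponents of $\gamma$. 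The spectral gap ``zero versus strictly negative'' is exactly what Fenichel theory requires.

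The splay orbit is identified as the periodic orbit in $\contF$ arising from the symmetric base configuration $\psi_j=2\pi(j-1)/N$. This configuration is invariant under the combined discrete symmetry ``cyclic index permutation together with rigid rotation by $2\pi/N$'', and since the reduced Möbius flow acts identically on every coordinate, this symmetry is preserved by the lift, enforcing the traveling-wave form $\phi_j(t)=\theta(t-(j-1)T/N)$ characteristic of a splay state. The hardest step is the reduced-system analysis: proving hyperbolic attractivity of $\gamma$—the source of the Floquet gap underlying the NAIM property—requires an explicit bifurcation and Floquet calculation on $G$, whereas the lifting, dimension-counting, and symmetry arguments are structural and essentially automatic once $\gamma$ is in hand.
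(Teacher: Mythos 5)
Your high-level skeleton---WS reduction, find a stable limit cycle in the reduced dynamics, lift across leaves, use a trivial tangential spectrum against contracting Floquet exponents---matches the structure the paper uses. But there is one concrete error in the proposal that hides the real technical content, and it matters: you write that ``the reduced ODE on $G$ is the same on every leaf.'' This is false for \eref{eq:ActiveRotatorsClassic}. The WS-reduced equations \eref{eq:WSGeneralEqs} involve $f(Z)$ and $g(Z)$, and the order parameter $Z$ depends jointly on the Möbius parameters $(\alpha,\psi)$ \emph{and} on the base configuration (equivalently, on the cross-ratios $\vec{\ccr}$), as made explicit in \eref{eq:MoebiusMeanField}. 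The $(\alpha,\psi)$ dynamics therefore differ from leaf to leaf, and the system does \emph{not} cleanly decouple into ``three-dimensional Möbius flow $\times$ frozen conserved quantities.'' Because of this coupling you cannot simply analyze one reduced 3D system, find a limit cycle, and lift it; and normal hyperbolicity does not ``come essentially for free,'' since the splitting $T\contF\oplus\mathcal{N}$ is not automatically invariant under the linearized flow.

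The paper's actual route is to \emph{truncate}: replace $Z$ by $\alpha$ to obtain a genuinely closed 3D system \eref{eq:truncatedWS}. This truncated system does decouple, its $\alpha$-equation is analyzed explicitly (the cubic \eref{eq:cubic_equation}, Lemma~\ref{lm:cubic_equation}, eigenvalues \eref{eq:alpha_eigenvalues}), and for $\kappa<\kappa_0$ it has a stable fixed point $\alpha_0$, hence a stable limit cycle $\{\alpha_0\}\times\S$ on each leaf, whose union is manifestly a NAIM. The error $Z-\alpha$ is then shown to be $\mathcal{O}(r^{N-1})$ with derivative $\mathcal{O}(Nr^{N-2})$ on and near the uniform-distribution leaf $\vec{\ccrt}$ (Lemmas~\ref{lm:MeanFieldApproximationtheta} and~\ref{lm:VectorBounds}), so for large $N$ the truncated terms are $C^1$-small and persistence of hyperbolic periodic orbits and of the NAIM applies. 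This is precisely where ``sufficiently large $N$'' enters---not, as you guessed, to ``preserve the bifurcation picture at finite $N$,'' but to make the truncation error small enough for the perturbation theorems. Your Floquet/bifurcation step is thus correctly identified as the analytic core for the truncated system, but the proposal is missing the second analytic core: the quantitative control of $Z-\alpha$ in $C^1$ that justifies passing from the truncated system back to the true one. The splay-state identification via the symmetric base configuration and the cyclic-shift symmetry is essentially the paper's Proposition~\ref{prop:SplayState}, and that part of your argument is sound.
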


The orbits of the continuum can be parameterized by $N-3$ functionally independent cross-ratios $\vec{\ccr}$ which are preserved under the flow of \eref{eq:GeneralSystem}, see \cite{Marvel_Mirollo_Strogatz_2009}. The splay state corresponds thereby to special value $\vec{\ccr}=\vec{\ccrt}$, defined below. On introducing symmetry-breaking perturbations in the on-site dynamics in \eref{eq:GeneralSystemPerturbed}, the periodic orbits of the family generally vanish while the NAIM persists. Averaging the resulting perturbation term in the now time-dependent cross-ratios over an orbit of the unperturbed system results in an averaged system for which hyperbolic fixed points correspond to hyperbolic periodic orbits of the perturbed system. This leads to our second main result theorem~\ref{prop:Averaging} for which we again give an informal version below.

\begin{theoremInformal}
	The averaged dynamics of the cross-ratios possess a fixed point at $\vec{\ccrt}$, corresponding to the splay state in the unperturbed system.
\end{theoremInformal}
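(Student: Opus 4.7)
The plan is to exploit the permutation symmetry of both the unperturbed system \eref{eq:GeneralSystem} and the perturbation $\epsilon \sum_{j} h(\phi_{j})$, together with the fact that the splay state is singled out by an additional cyclic subsymmetry. The full perturbed system \eref{eq:GeneralSystemPerturbed} is equivariant under the natural action of the symmetric group $S_N$ on $\T^N$ by permutation of indices, since $Z$ is a symmetric function of the phases and $h$ is applied identically to each oscillator. Consequently, the unperturbed WS-flow commutes with $S_N$, so $S_N$ descends to a well-defined action on the cross-ratio space, and this action is inherited by any averaged dynamics constructed from the perturbed system.

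Next, I would identify the discrete symmetry of the splay state. Along the splay orbit one has $\phi_{j}(t) = \phi_{1}(t) + 2\pi(j-1)/N$, so the cyclic permutation $\sigma\colon \phi_{j} \mapsto \phi_{j+1\bmod N}$ coincides with the time shift by $T/N$, where $T$ is the splay period. Since the cross-ratios are preserved by the WS-flow, this means $\sigma$ fixes the cross-ratio value $\vec{\ccrt}$, so $\vec{\ccrt}$ is a fixed point of the induced $\mathbb{Z}_{N} \subset S_{N}$ action on cross-ratio space. Because the averaging operator takes an $S_N$-equivariant vector field (the perturbation) and integrates it along $\mathbb{Z}_N$-invariant orbits, the resulting averaged vector field is $\mathbb{Z}_N$-equivariant. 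Its value at $\vec{\ccrt}$ must therefore lie in the $\mathbb{Z}_{N}$-invariant subspace of the tangent space at $\vec{\ccrt}$. The proof then concludes by showing this invariant subspace is trivial, forcing the averaged vector field to vanish there.

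The main obstacle I anticipate is this last step: showing that the $\mathbb{Z}_{N}$ action on the $(N-3)$-dimensional tangent space at $\vec{\ccrt}$ has no non-trivial fixed vectors. This requires a concrete description of how a chosen fundamental set of cross-ratios transforms under $\sigma$ at the splay configuration, together with a short representation-theoretic calculation verifying that the trivial representation does not appear in the resulting $\mathbb{Z}_{N}$-representation. Should that computation become unwieldy, a direct alternative is to use that on the splay orbit the oscillators carry the same perturbation $h$ up to constant phase shifts of $2\pi/N$, and to verify by explicit integration that the time average of each component of $\dot{\vec{\ccr}}$ vanishes term by term, the mean-zero property of $h$ providing the cancellations needed after summation over $j$.
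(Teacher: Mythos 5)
Your route is sound and genuinely different from the paper's. The paper works in the shifted cross-ratio coordinates $\Cr_{k,k+1,k+2,k+3}$, uses the splay's spatio-temporal symmetry to show $(F_h)_{k+1}(\vec{\ccrt})=(F_h)_k(\vec{\ccrt})$, and then evaluates $\frac{1}{N}\sum_{k=1}^N(F_h)_k$ directly, showing the integrand vanishes pointwise on the splay orbit through explicit pairwise cancellations of the derivatives $\DD_{\theta_k}\Cr_{p,q,r,s}$ on $\level{\vec{\ccrt}}$. You instead deduce $A\,\vec{F}_h(\vec{\ccrt})=\vec{F}_h(\vec{\ccrt})$ from equivariance, where $A$ is the linearization of the induced cyclic action on cross-ratio space, and reduce the claim to showing $A$ has no nonzero fixed vector. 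The step you flag as the obstacle is indeed where the work lies, but it goes through cleanly: conjugate $\sigma$ by the rotation $\theta_j\mapsto\theta_j+2\pi/N$ (which is an element of $\MG$, hence invisible in cross-ratio space) so that the composite fixes $\vec{\thetat}$ and acts on $T_{\vec{\thetat}}\T^N\cong\mathbb{R}^N$ as the cyclic permutation matrix, i.e.\ the regular representation of $\mathbb{Z}_N$. The kernel of $\DD\vec{\Cr}|_{\vec{\thetat}}$ is the tangent to the Möbius orbit, spanned by $(1,\dots,1)$, $(\cos\thetat_j)_j$, $(\sin\thetat_j)_j$; as a $\mathbb{Z}_N$-representation this is exactly $\chi_0\oplus\chi_1\oplus\chi_{-1}$. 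The quotient $(N-3)$-dimensional cross-ratio tangent space therefore carries $\bigoplus_{j=2}^{N-2}\chi_j$, which contains no trivial summand for any $N\geq4$, so $A$ has no eigenvalue $1$ and $\vec{F}_h(\vec{\ccrt})=\vec{0}$. Two small cautions: $S_N$ does not act on $\OT$ since it destroys the cyclic ordering, only $\mathbb{Z}_N$ does, so phrase the equivariance in terms of $\mathbb{Z}_N$ from the start; and your relation $\phi_j(t)=\phi_1(t)+2\pi(j-1)/N$ is not the splay condition in general---the correct statement is $\phi_j(t)=\phi_1\big(t+(j-1)T/N\big)$, which is what identifies $\sigma$ with a time shift. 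Your argument is more conceptual and explains \emph{why} the paper's cancellation happens; the paper's explicit computation also yields the $N=4$ antisymmetry $F_h(\ccr)=-F_h(1-\ccr)$ for free, which is useful in the application section.
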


Whether $\vec{\ccrt}$ is hyperbolic depends on the specific setup of the model \eref{eq:GeneralSystemPerturbed}. However, we conjecture that splay state orbits are generically hyperbolic.

In \sref{sec:ARWS}, we apply our results to the active rotator models where we first
determine the critical coupling strength $\kappa_0$ of \eref{eq:ActiveRotatorsClassic} below which the NAIM-forming continuum arises and apply the averaging principle to the dynamics on the NAIM and conduct numerical experiments where we find that $\vec{\ccrt}$ is hyperbolic for the averaged dynamics so that the splay state is robust in \eref{eq:ActiveRotatorPerturbed}.

\paragraph{Strategy:} We outline the main steps to prove our main theorems~\ref{thm:ManifoldContinuum} and \ref{prop:Averaging}.:

\textbf{Step 1:} Since we are dealing with identically driven units, we can 
assume the elements to be in (strict) cyclic order which is preserved by the flow of \eref{eq:GeneralSystem}. In lemma~\ref{lm:WSChart}, we show that there exists a diffeomorphism from the space $\OT$ of phases $\vec{\phi}\in\T^N$ in strict cyclic order to the space of Möbius group parameters and the $N-3$ conserved cross-ratios of \eref{eq:GeneralSystem} which allows to write down the dynamics of $\vec{\phi}(t)$ in terms of these new coordinates.

\textbf{Step 2:} We show that there exists an open set of cross-ratios, for which the equations of motion for the system \eref{eq:GeneralSystem} in WS-variables can be approximated in $C^1$-norm by truncating higher-order terms in $N$. We then proceed in theorem~\ref{thm:ManifoldContinuum} by showing that there exists a family of periodic orbits for this truncated system, whose union possesses the desired NAIM structure by virtue of the persistence theorems for hyperbolic orbits \cite{Shilnikov2001}. 

\textbf{Step 3:} The dynamics on the NAIM describe the asymptotic dynamics of the perturbed system \eref{eq:GeneralSystemPerturbed}. For any smooth $h$, applying the method of averaging to the slowly-varying cross-ratio components of the vector field on the NAIM, we define an averaged perturbation function $\vec{F}_h$ which governs the averaged dynamics in $\vec{\ccr}$. Hyperbolic fixed points of this system determine persistent periodic orbits for the perturbed system \eref{eq:GeneralSystemPerturbed}. 

\textbf{Step 4:} Using cyclic permutation properties of the cross-ratios, we show in theorem~\ref{prop:Averaging} for $N=4$ that $\vec{F}_h$ vanishes for smooth choices of $h$ for the cross-ratios $\vec{\ccrt}$ that correspond to the splay state.  Afterwards, we show that this statement holds true for general $N\geq4$ by summing over the group action of cyclic permutations.

\section{Preliminaries}

\subsection{Normally attracting invariant manifolds}
Our definition of a normally attracting invariant manifold (NAIM) is according to \cite{Eldering_2013}. %and describes what \cite{HirschPughShub2006} refers to as ``eventual absolute $r$-normal hyperbolicity''.
\begin{definition}[Normally attracting invariant manifold]
	\label{def:NAIM}
	Let $r\geq1$ and $\dot{\vec{x}}=\vec{f}(\vec{x})$ with $\vec{x}\in\mathbb{R}^n$ and $\vec{f}\in C^r$ be a dynamical system with flow $\Phi:\mathbb{R}\times\mathbb{R}^n\to\mathbb{R}^n$. A given $C^r$-submanifold $\contF\subset\mathbb{R}^n$ is then called an attracting invariant manifold of this system if it fulfills the following three criteria:
	\begin{enumerate}
		\item $\contF$ is invariant under the flow, \ie, $\Phi^t(\contF)=\contF$ $\forall t\in\mathbb{R}$.
		\item There exists a continuous splitting
		\begin{equation*}
			T_{\contF}\mathbb{R}^n=T\contF\oplus\mathcal{N}
		\end{equation*}
		of the tangent bundle $T\mathbb{R}^n$, restricted to $\contF$, into the tangent bundle $T\contF$ and a normal bundle $\mathcal{N}$ with continuous projections $\pi_{\contF}$ and $\pi_{\mathcal{N}}$. This splitting is invariant under the linearized flow $\DD\Phi^t=\DD\Phi^t_{\contF}\oplus \DD\Phi^t_{\mathcal{N}}$.
		\item There exist real numbers $a$ and $b$ with $a<r\,b\leq0$, and $C>0$ such that the following exponential growth conditions hold on $T_{\contF}\mathbb{R}^n$:
		\begin{eqnarray*}
			\forall t\leq0, (\vec{x},\vec{\nu})\in T\contF:& \norm{\DD\Phi^t_{\contF}(\vec{x})\cdot\vec{\nu}}&\leq C\,\rme^{bt}\norm{\vec{\nu}} \\
			\forall t\geq0, (\vec{x},\vec{\nu})\in \mathcal{N}:& \norm{\DD\Phi^t_{\mathcal{N}}(\vec{x})\cdot\vec{\nu}}&\leq C\,\rme^{at}\norm{\vec{\nu}}.
		\end{eqnarray*}
	\end{enumerate}
\end{definition}
%The third condition states that at any point $\vec{x}\in\contF$, the linearized flow must be contracting sufficiently stronger in normal direction to $M$ than in any tangent direction.

A key feature of NAIMs is that they persist under small perturbations of $\vec{f}$, as is stated in the following theorem, cf. Theorem. 4.1 \cite{HirschPughShub2006}.
\begin{theorem}[Persistence of NAIMs]\label{thm:persistence}
	Let $\contF\subset\mathbb{R}^n$ be a compact attracting invariant manifold of the system $\dot{\vec{x}}=\vec{f}(\vec{x})$. Then, there exists an $\epsilon>0$ such that for any vector field $\vec{\tilde{f}}$ with $\norm{\vec{\tilde{f}}-\vec{f}}_{C^1}\leq\epsilon$, there exists a unique invariant $C^r$-manifold $\tilde{\contF}$ for $\vec{\tilde{f}}$ that is diffeomorphic to $\contF$, normally attracting, and $\mathcal{O}\left(\norm{\vec{\tilde{f}}-\vec{f}}_{C^1}\right)$-close to $\contF$.
\end{theorem}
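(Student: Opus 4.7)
The plan is to follow the classical Hadamard graph-transform approach, treating $\tilde\contF$ as the graph of a section of the normal bundle $\mathcal{N}$ over $\contF$ that is a fixed point of a contraction induced by the perturbed flow. First I would use the compactness of $\contF$ together with the continuous splitting $T_{\contF}\mathbb{R}^n = T\contF \oplus \mathcal{N}$ to construct a smooth tubular neighborhood $U$ of $\contF$, identified via the exponential map of an auxiliary Riemannian metric with a disk bundle $\set{(x,\nu)\in\mathcal{N}}{\norm{\nu}<\delta}$. Any $C^r$-manifold lying in $U$ and transverse to the fibers is then the graph of a section $\sigma:\contF\to\mathcal{N}$, and the unperturbed manifold corresponds to $\sigma\equiv 0$.

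Next I would fix a time $T>0$ large enough that, for the time-$T$ map of $\vec{f}$, the normal part of the linearized flow contracts by at least $C\,\rme^{aT}$ while the induced action on the base expands by at most $C\,\rme^{-bT}$. On the closed ball $B_\delta$ of bounded Lipschitz sections with $\norm{\sigma}_{C^0}\leq\delta$, define the graph-transform operator $\mathcal{T}_{\vec{f}}^T$ by pushing forward the graph of $\sigma$ under $\Phi^T$ and projecting back along the fibers of $\mathcal{N}$; this is well-defined by choosing $\delta$ small thanks to compactness. The normal hyperbolicity condition $a<rb\leq 0$ gives a uniform spectral gap, so $\mathcal{T}_{\vec{f}}^T$ is a $C^0$-contraction on $B_\delta$ with fixed point $0$.

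Now for a perturbation $\vec{\tilde{f}}$ with $\norm{\vec{\tilde{f}}-\vec{f}}_{C^1}\leq\epsilon$, the corresponding operator $\mathcal{T}_{\vec{\tilde{f}}}^T$ differs from $\mathcal{T}_{\vec{f}}^T$ by an amount of order $\epsilon$ in $C^1$. For $\epsilon$ sufficiently small, $\mathcal{T}_{\vec{\tilde{f}}}^T$ still maps $B_\delta$ into itself and remains a contraction, so the Banach fixed-point theorem yields a unique $\sigma^*\in B_\delta$ with $\mathcal{T}_{\vec{\tilde{f}}}^T\sigma^*=\sigma^*$. Its graph is the desired invariant $\tilde{\contF}$; since $\sigma^*$ is the limit of iterates starting from $0$ under an $\epsilon$-perturbed contraction, one gets $\norm{\sigma^*}_{C^1}=\mathcal{O}(\epsilon)$, hence the required closeness. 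Full-flow invariance (not only for time $T$) follows since $\Phi^{t}\tilde{\contF}$ is also a fixed point of $\mathcal{T}_{\vec{\tilde{f}}}^T$ for every $t$, and uniqueness within $B_\delta$ forces $\Phi^{t}\tilde{\contF}=\tilde{\contF}$.

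The main obstacle is promoting $\sigma^*$ from merely Lipschitz (or $C^0$) to $C^r$ and establishing the corresponding $C^r$ closeness: this is precisely where the rate assumption $a<rb$ is essential. The standard route is the fiber-contraction theorem of Hirsch, Pugh and Shub, applied inductively to the lifted graph transforms on the $k$-jet bundles over $\contF$ for $k=1,\dots,r$; at each level the contraction constant is roughly $\rme^{(a-kb)T}$, which is strictly negative by the spectral gap, and the base dynamics already converges from the previous step. The compactness of $\contF$ is used throughout to obtain uniform estimates on the tube radius $\delta$, on the Lipschitz constants of $\Phi^T$, and on the spectral rates, and these uniform bounds are what make the contraction and its regularity upgrade work simultaneously on all of $\contF$.
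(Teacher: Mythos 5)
The paper does not prove this statement; it cites it directly as Theorem~4.1 of Hirsch, Pugh and Shub, and your proposal is a correct outline of exactly that classical argument: Hadamard graph transform on sections of the normal bundle inside a compact tubular neighborhood, contraction via the spectral gap $a<rb\leq 0$, persistence of the contraction under a $C^1$-small change of vector field, passage from time-$T$ invariance to flow invariance by uniqueness, and a fiber-contraction bootstrap on jet bundles to promote the Lipschitz fixed point to $C^r$ with the $\mathcal{O}(\epsilon)$ closeness. Since the paper supplies no proof of its own, there is nothing to contrast beyond noting that your sketch faithfully reproduces the strategy of the cited reference.
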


We close this section with the following remarks.
\begin{remarks}~
	\begin{enumerate}
		\item If not stated otherwise, we employ the Euclidean norm $\norm{\cdot}$ and its induced norm.
		\item The persistence theorem~\ref{thm:persistence} for NAIMs also holds for manifolds with boundary as long as they are \emph{overflowing invariant}. In our case, the NAIMs have a boundary which is generally not overflowing invariant. This problem can be overcome by modifying $\vec{f}$ in a neighborhood of the boundary accordingly.
		\item We use Landau's big-$\mathcal{O}$ notation for order functions, \ie, $f(\epsilon)=\mathcal{O}(g(\epsilon))$ if there exist constants $K\geq0$ and $\epsilon_0\geq0$ such that $0\leq \abs{f(\epsilon)}\leq K\abs{g(\epsilon)}$ for all $0\leq\epsilon\leq\epsilon_0$.
	\end{enumerate}
\end{remarks}

\subsection{The averaging method}
To investigate the asymptotic dynamics of \eref{eq:GeneralSystemPerturbed}, we employ averaging theory. This approach relies on Theorem 7.9 in \cite{Chicone_2006} on averaging over periodic orbits, stated below.
\begin{theorem}\label{thm:AveragedPeriodicOrbits}
	Consider the system
	\begin{equation}\label{eq:AveragingTheoremFull}
	\eqalign{
		\dot{\vec{x}} = \epsilon\;\!\vec{F}(\vec{x},\psi) + \epsilon^2 \vec{F}_2(\vec{x},\psi,\epsilon) \cr
		\dot{\psi} =\Omega(\vec{x}) + \epsilon\,G(\vec{x},\psi,\epsilon)
	}
	\end{equation}
	where $\vec{x}\in\mathbb{R}^n$ and $\psi\in\S$ and assume that the function $\Omega(\vec{x})$ is bounded away from zero, \ie, $\Omega(\vec{x})>c>0$ for some number $c$ and all $\vec{x}\in\mathbb{R}^n$. If the averaged system
	\begin{equation*}
		\dot{\vec{y}} = \epsilon\;\!\vec{\hat{F}}(\vec{y})
	\end{equation*}
	with
	\begin{equation*}
		\vec{\hat{F}}(\vec{y}) \defined \frac{1}{2\pi} \int_0^{2\pi} \vec{F}(\vec{y},\psi) \dd{\psi}
	\end{equation*}
	possesses a hyperbolic fixed point $\vec{y}_0\in \mathbb{R}^n$ and $\epsilon>0$ is sufficiently small, \eref{eq:AveragingTheoremFull} possesses a periodic solution $t\mapsto\left(\vec{x}(t),\psi(t)\right)$ of the same stability type as $\vec{y}_0$.
\end{theorem}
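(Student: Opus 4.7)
The plan is to recast \eref{eq:AveragingTheoremFull} into the standard averaging normal form and then construct the periodic orbit as a fixed point of a Poincaré return map via the implicit function theorem. Since $\Omega(\vec{x})>c>0$ and $\epsilon|G|$ is small, for sufficiently small $\epsilon$ the phase $\psi$ is strictly monotone, so it can serve as the new time variable. Dividing the slow equation by $\dot{\psi}$ produces
\begin{equation*}
	\diff{\vec{x}}{\psi} = \epsilon\,\frac{\vec{F}(\vec{x},\psi)}{\Omega(\vec{x})} + \mathcal{O}(\epsilon^2),
\end{equation*}
a non-autonomous system that is $2\pi$-periodic in $\psi$ and of standard averaging form with small parameter $\epsilon$.

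Next I would apply the classical near-identity averaging transformation $\vec{x} = \vec{y} + \epsilon\,\vec{u}(\vec{y},\psi)$, where $\vec{u}$ is the unique zero-mean, $2\pi$-periodic-in-$\psi$ solution of $\partial_\psi \vec{u} = \bigl[\vec{F}(\vec{y},\psi) - \vec{\hat{F}}(\vec{y})\bigr]/\Omega(\vec{y})$; existence and smoothness follow from the vanishing $\psi$-mean of the right-hand side. In the new coordinate the dynamics reads $\diff{\vec{y}}{\psi} = \epsilon\,\vec{\hat{F}}(\vec{y})/\Omega(\vec{y}) + \mathcal{O}(\epsilon^2)$, an $\mathcal{O}(\epsilon^2)$-perturbation of an autonomous averaged flow. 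The scalar factor $1/\Omega(\vec{y})$ does not move fixed points and only rescales eigenvalues by the positive number $1/\Omega(\vec{y}_0)$, so hyperbolicity and stability type at $\vec{y}_0$ are preserved.

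Now I would introduce the Poincaré return map $P_\epsilon(\vec{y})\defined\vec{y}(2\pi;\vec{y},\epsilon)$ at the section $\{\psi=0\}$. Direct integration yields $P_\epsilon(\vec{y}) = \vec{y} + 2\pi\epsilon\,\vec{\hat{F}}(\vec{y})/\Omega(\vec{y}) + \mathcal{O}(\epsilon^2)$, and a fixed point of $P_\epsilon$ corresponds precisely to a $2\pi$-periodic orbit in $\psi$, hence to a periodic orbit of \eref{eq:AveragingTheoremFull}. Since $P_0=\id$, the defining equation $P_\epsilon(\vec{y})-\vec{y}=0$ is degenerate at $\epsilon=0$, so I would remove the degeneracy by setting $Q(\vec{y},\epsilon)\defined(P_\epsilon(\vec{y})-\vec{y})/\epsilon$ for $\epsilon\neq 0$, extended continuously at $\epsilon=0$ by $Q(\vec{y},0)\defined 2\pi\,\vec{\hat{F}}(\vec{y})/\Omega(\vec{y})$. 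The map $Q$ is $C^1$ near $(\vec{y}_0,0)$, satisfies $Q(\vec{y}_0,0)=\vec{0}$, and its $\vec{y}$-derivative there is $2\pi\,\DD\vec{\hat{F}}(\vec{y}_0)/\Omega(\vec{y}_0)$, which is invertible by hyperbolicity of $\vec{y}_0$. The implicit function theorem then delivers a unique branch $\vec{y}_\epsilon$ of fixed points for all sufficiently small $\epsilon>0$.

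Finally, for the stability type I would linearize: $\DD P_\epsilon(\vec{y}_\epsilon) = \id + 2\pi\epsilon\,\DD\vec{\hat{F}}(\vec{y}_0)/\Omega(\vec{y}_0) + \mathcal{O}(\epsilon^2)$, whose Floquet multipliers are $1+2\pi\epsilon\,\mu_j/\Omega(\vec{y}_0)+\mathcal{O}(\epsilon^2)$ with $\mu_j$ the eigenvalues of $\DD\vec{\hat{F}}(\vec{y}_0)$. Since $\Omega(\vec{y}_0)>0$, for small $\epsilon>0$ these multipliers lie inside (respectively outside) the unit circle exactly when $\Re\mu_j<0$ (respectively $>0$), so the resulting periodic orbit inherits the stability type of $\vec{y}_0$. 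The only delicate point is verifying that the quotient $Q$ is genuinely $C^1$ up to $\epsilon=0$; this requires uniform control of the $\mathcal{O}(\epsilon^2)$ remainders, which in turn uses smoothness of $\vec{F}_2$, $G$, and the near-identity transformation on a compact neighborhood of $\vec{y}_0$. Everything else is routine.
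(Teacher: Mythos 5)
Your argument is correct, and it is essentially the standard proof of this result. Note that the paper itself does not prove this theorem at all — it is quoted verbatim (as Theorem~7.9 of Chicone, \cite{Chicone_2006}) and used as a black box. So there is no internal proof to compare against; what you have reconstructed is, in effect, Chicone's own route: use $\Omega>c>0$ to reparameterize time by the angle $\psi$, bringing the slow equation to the standard $2\pi$-periodic averaging form $\mathrm{d}\vec{x}/\mathrm{d}\psi=\epsilon\,\vec{F}/\Omega+\mathcal{O}(\epsilon^2)$; apply the near-identity change $\vec{x}=\vec{y}+\epsilon\vec{u}(\vec{y},\psi)$ with $\partial_\psi\vec{u}=(\vec{F}-\hat{\vec{F}})/\Omega$ (well-defined because the integrand has zero $\psi$-mean); then obtain the orbit via the implicit function theorem for the blown-up Poincaré map $Q=(P_\epsilon-\id)/\epsilon$. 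Two small points worth making explicit, both of which you implicitly handle correctly: (i) at a zero $\vec{y}_0$ of $\hat{\vec{F}}$ the Jacobian of $\hat{\vec{F}}/\Omega$ is exactly $\Omega(\vec{y}_0)^{-1}\DD\hat{\vec{F}}(\vec{y}_0)$ because the term carrying $\DD\Omega$ is multiplied by $\hat{\vec{F}}(\vec{y}_0)=0$, so hyperbolicity and the sign pattern of the spectrum are preserved under the division by $\Omega$; (ii) the fixed point of the $\vec{y}$-return map yields a genuine periodic orbit of the original $\vec{x}$-system because $\vec{u}$ is $2\pi$-periodic in $\psi$, so the near-identity map agrees on the sections $\psi=0$ and $\psi=2\pi$. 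Your remark about the $C^1$-regularity of $Q$ up to $\epsilon=0$ is the right thing to flag; it follows from writing the flow as $\vec{y}(\psi)=\vec{y}(0)+\epsilon\vec{R}(\psi,\vec{y}(0),\epsilon)$ with $\vec{R}$ smooth (by smooth dependence of solutions on parameters), giving $Q(\vec{y},\epsilon)=\vec{R}(2\pi,\vec{y},\epsilon)$.
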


\subsection{Watanabe-Strogatz integrability}
Watanabe and Strogatz showed that a class of systems of $N$ identically driven phase variables possess $N-3$ conserved quantities \cite{Watanabe_Strogatz_1994}. Later on, this finding was explained in a broader geometrical context in terms of Möbius transformations  \cite{Marvel_Mirollo_Strogatz_2009}. More specific, WS-integrable systems can be brought to the form 
\begin{equation}\label{eq:WSGeneral}
	\dot{\phi}_j = f\rme^{\rmi\phi_j} + g + \fbar \rme^{-\rmi\phi_j}
\end{equation}
with smooth functions $f:\T^N\to\mathbb{C}$ and $g:\T^N\to\mathbb{R}$ where $\T^N\defined(\S)^N$ denotes the $N$-dimensional torus and a bar denotes the complex conjugate. For short, we write $\vec{\phi}\defined(\phi_1,\dots,\phi_N)\in\T^N$.

The dynamics of the $\phi_j(t)$ in \eref{eq:WSGeneral} can be written as
\begin{eqnarray}\label{eq:MoebiusAction}
	\rme^{\rmi\phi_j(t)} = \M{\alpha(t)}{\psi(t)}\left(\rme^{\rmi\theta_j}\right),\quad j=1,\dots,N
\end{eqnarray}
where
\begin{equation*}
	\M{\alpha}{\psi}\left(z\right) \defined \frac{\alpha+ \rme^{\rmi\psi}z}{1+\alphabar\rme^{\rmi\psi}z}
\end{equation*}
for $z\in\mathbb{C}$ with parameters $\psi\in\S$ and $\alpha\in\D\defined\set{z\in\mathbb{C}}{\abs{z}<1}$. $G_{\alpha,\psi}$ is an element of the Möbius group
\begin{equation*}
	\mathcal{G}\defined\set{\M{\alpha}{\psi}:\D\to\D}{(\alpha,\psi)\in\D\times\S}.
\end{equation*}
where $\D\defined\set{z\in\mathbb{C}}{\abs{z}<1}$ denotes the complex unit disk. Generally, the Möbius group is the group of biholomorphic automorphisms of the augmented complex plane $\hat{\mathbb{C}}\defined\mathbb{C}\cup\{\infty\}$ \cite{Ahlfors_1953}. In the context of WS-theory, one considers the subgroup $\mathcal{G}$ of biholomorphic automorphisms of $\D$. By analytic continuation, these transformations map the boundary $\partial\D$ bijectively onto itself. In accordance to the literature on WS-theory we call $\MG$ \emph{the} Möbius group.

\Eref{eq:MoebiusAction} means that $\MG$ acts diagonally on $\T^N$. Setting in accordance to the literature for any $\vec{\theta}=(\theta_1,\dots,\theta_N)\in\T^N$
\begin{equation*}
	\rme^{\rmi\vec{\theta}}\defined \left( \rme^{\rmi\theta_1},\dots,\rme^{\rmi\theta_N}\right)
\end{equation*}
we write in a slight abuse of notation
\begin{equation}\label{eq:DiagonalAction}
	G_{\alpha,\psi}\left(\rme^{\rmi\vec{\theta}}\right)\defined\left(G_{\alpha,\psi}\left(\rme^{\rmi\theta_1}\right),\dots,G_{\alpha,\psi}\left(\rme^{\rmi\theta_N}\right)\right).
\end{equation}
The time evolution of the group parameters $\alpha$ and $\psi$ is then given by
\begin{equation}\label{eq:WSGeneralEqs}
	\eqalign{
	\dot{\alpha} = \rmi\left(f\alpha^2+g\alpha+\fbar\right) \\
	\dot{\psi} = f\alpha+ g + \fbar\alphabar
	}
\end{equation}
where $f$ and $g$ now depend implicitly on $\alpha$, $\psi$, and $\vec{\theta}$ by virtue of \eref{eq:MoebiusAction}.

\section{NAIMs for WS-integrable systems}\label{sec:WSCase}

The aim of this section is to establish a way in which the dynamics of a systems of identical phase variables $\vec{\phi}$ can be written in terms of the Möbius group parameters and the cross-ratios. The main result of this section is thus lemma~\ref{lm:WSChart} on the existence of a diffeomorphism between the set of all $\vec{\phi}\in\T^N$ in cyclic order and the space $\D\times\S\times \OCr$ where $\OCr$ is the $(N-3)$-dimensional space of conserved quantities.

\subsection{Watanabe-Strogatz variables}

%Since for WS-integrable systems, all units are identically driven by the common fields $f$ and $g$, they cannot overtake each other and the order of phases on $\S$ is preserved under the flow of \eref{eq:GeneralSystem}. 
Since all units are identically driven, the order of phases on $\S$ is preserved under the flow of \eref{eq:GeneralSystem}. We restrict our attention without loss of generality to the case where the $\phi_j$ are in (strict) cyclic order. This gives rise to the following definition.
\begin{definition}
	For fixed $N$, the space $\OT\subset\T^N$ is defined as
	\begin{equation*}%\label{eq:DefU}
	\OT \defined \set{ \vec{\theta}\in\T^N}{\theta_1 <\dots <\theta_N<\theta_1+ 2\pi}.
	\end{equation*}
\end{definition}

The (diagonal) group action \eref{eq:DiagonalAction} of the Möbius group $\MG$ induces the following equivalence relation on $\OT$.

\begin{definition}
	Any two points $\vec{\vartheta},\vec{\theta}\in\OT$ are equivalent if and only if there exists a $\M{\alpha}{\psi}\in\MG$ such that $\rme^{\rmi\vec{\vartheta}} = \M{\alpha}{\psi}\left(\rme^{\rmi\vec{\theta}}\right)$ in which case we write $\vec{\vartheta}\sim\vec{\theta}$.
\end{definition}
That this is an equivalence relation is established below.
\begin{proposition}\label{prop:EquivalenceRelation}
	The relation $\sim$ is an equivalence relation on $\OT$.
\end{proposition}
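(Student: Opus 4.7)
The plan is to verify the three axioms of an equivalence relation directly, exploiting the fact that $\MG$ is a group under composition (a fact already recorded in the preliminaries since $\MG$ is the group of biholomorphic automorphisms of $\D$, restricted to $\partial\D=\S$). The diagonal action \eref{eq:DiagonalAction} of $\MG$ on $\OT$ then translates the group properties (identity, inverses, closure under composition) into reflexivity, symmetry, and transitivity of $\sim$.

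For reflexivity, I would observe that $G_{0,0}$, obtained by setting $\alpha=0$ and $\psi=0$, reduces to the identity map $z\mapsto z$ on $\D$, so that $\rme^{\rmi\vec{\theta}}=G_{0,0}(\rme^{\rmi\vec{\theta}})$ trivially for every $\vec{\theta}\in\OT$. For symmetry, starting from $\rme^{\rmi\vec{\vartheta}}=G_{\alpha,\psi}(\rme^{\rmi\vec{\theta}})$, I would solve the Möbius equation $w=(\alpha+\rme^{\rmi\psi}z)/(1+\alphabar\rme^{\rmi\psi}z)$ for $z$, obtaining $z=\rme^{-\rmi\psi}(w-\alpha)/(1-\alphabar w)$, which a short rearrangement identifies with $G_{-\alpha \rme^{-\rmi\psi},\,-\psi}(w)$. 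Since $|-\alpha\rme^{-\rmi\psi}|=|\alpha|<1$, this inverse lies in $\MG$, yielding $\vec{\theta}\sim\vec{\vartheta}$.

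For transitivity, given $\rme^{\rmi\vec{\vartheta}}=G_{\alpha_1,\psi_1}(\rme^{\rmi\vec{\theta}})$ and $\rme^{\rmi\vec{\theta}}=G_{\alpha_2,\psi_2}(\rme^{\rmi\vec{\rho}})$, I would substitute and reduce the composition $G_{\alpha_1,\psi_1}\circ G_{\alpha_2,\psi_2}$ to canonical form, reading off $\alpha_3\in\D$ and $\psi_3\in\S$. Concretely, dividing the numerator and denominator of the composed rational map by $1+\bar{\alpha}_1\alpha_2\rme^{\rmi\psi_1}$ produces a Möbius transformation of the required form $G_{\alpha_3,\psi_3}$, establishing that $\vec{\vartheta}\sim\vec{\rho}$. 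This composition calculation is the only real piece of bookkeeping in the proof.

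The main (and only) technical obstacle is the algebraic check that compositions and inverses stay within the two-parameter family $\{G_{\alpha,\psi}\}$, \ie, that these operations do not take one outside $\MG$. This is essentially the standard verification that $\MG$ is a group, and it is already implicit in the paper's identification of $\MG$ with the biholomorphic automorphism group of $\D$; once this is invoked, the three properties of $\sim$ follow immediately, and one should additionally remark that orientation-preserving Möbius automorphisms of $\D$ preserve cyclic order on $\partial \D=\S$, so the relation is consistently defined on $\OT$.
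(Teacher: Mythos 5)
Your proof is correct and takes essentially the same approach as the paper: both derive reflexivity, symmetry, and transitivity directly from the group structure of $\MG$ (identity element, inverses, closure under composition). The paper simply cites these group properties from \cite{Ahlfors_1953} rather than spelling out the explicit formula for the inverse or the composition bookkeeping, but the argument is the same.
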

\begin{proof}
	This follows immediately from the group properties of $\MG$ \cite{Ahlfors_1953}: Consider any three points $\vec{\vartheta}\sim\vec{\theta}\sim\vec{\phi}\in\OT$. Then, (i) the identity map $\M{0}{0}\in\MG$ guarantees that $\vec{\theta}\sim\vec{\theta}$, (ii) there exists a $\M{\alpha}{\psi}\in\MG$ such that $\rme^{\rmi\vec{\vartheta}} = \M{\alpha}{\psi}\left(\rme^{\rmi\vec{\theta}}\right)$. The inverse $\M{\alpha}{\psi}^{-1}\in\MG$ fulfills $\rme^{\rmi\vec{\theta}} = \M{\alpha}{\psi}^{-1}\left(\rme^{\rmi\vec{\vartheta}}\right)$ and $\vec{\theta}\sim\vec{\vartheta}$. (iii) there exists a $\M{\beta}{\chi}\in\MG$ with $\rme^{\rmi\vec{\theta}}=\M{\beta}{\chi}\left(\rme^{\rmi\vec{\phi}}\right)$ and $\rme^{\rmi\vec{\vartheta}} = \M{\alpha}{\psi}\circ\M{\beta}{\chi}\left(\rme^{\rmi\vec{\phi}}\right)$ where $\M{\alpha}{\psi}\circ\M{\beta}{\chi}\in\MG$ so that $\vec{\vartheta}\sim\vec{\phi}$.
\end{proof}

The resulting partition of $\OT$ in equivalence classes is invariant under the flow of \eref{eq:WSGeneral} according to \eref{eq:MoebiusAction}. We write
\begin{equation*}
	[\vec{\theta}]\defined\set{ \vec{\vartheta}\in\OT}{\exists \M{\alpha}{\psi}\in\MG : \rme^{\rmi\vec{\vartheta}} = \M{\alpha}{\psi}\left(\rme^{\rmi\vec{\theta}}\right)}
\end{equation*}
for its equivalence classes. 

The real-valued so-called \emph{cross-ratios}
\begin{equation*}%\label{eq:GeneralCrossRatios}
	\Cr_{p,q,r,s}(\vec{\theta}) \defined \frac {\big(\rme^{\rmi\theta_p}-\rme^{\rmi\theta_s}\big)\big( \rme^{\rmi\theta_q}-\rme^{\rmi\theta_r} \big)} {\big( \rme^{\rmi\theta_q}-\rme^{\rmi\theta_s} \big)\big(\rme^{\rmi\theta_p}-\rme^{\rmi\theta_r}\big)}
\end{equation*}
with $p,q,r,s=1,\dots,N$ are invariant under arbitrary Möbius transformations \cite{Ahlfors_1953} and are thus in particular conserved under the flow of \eref{eq:WSGeneral}. It was shown in \cite{Marvel_Mirollo_Strogatz_2009}, that all cross-ratios can be written in terms of $N-3$ functionally independent ones. To choose such a set, we make the following definition.%We make the following definition to choose such a set of independent cross-ratios.

\begin{definition}
	Let the set $V\subset\mathbb{R}^{N-3}$ be defined as
	\begin{equation*}%\label{eq:DefV}
		\OCr \defined \set{ \vec{\ccr}\in(0,1)^{N-3}}{1>\ccr_1>\dots>\ccr_{N-3}>0}.
	\end{equation*}
	The cross-ratio function $\vec{\Cr}:\OT\to\OCr$ is defined by
	\begin{equation}\label{eq:CrossRatios}
	\eqalign{
		\vec{\Cr}(\vec{\theta})&\defined\left( \Cr_1(\vec{\theta}),\dots,\Cr_{N-3}(\vec{\theta}) \right) \\
		\Cr_k(\vec{\theta})&\defined\Cr_{1,2,3,k+3}(\vec{\theta})= \frac{\big(\rme^{\rmi\theta_1}-\rme^{\rmi\theta_{k+3}}\big)\big( \rme^{\rmi\theta_2}-\rme^{\rmi\theta_3} \big)} {\big( \rme^{\rmi\theta_2}-\rme^{\rmi\theta_{k+3}} \big)\big(\rme^{\rmi\theta_1}-\rme^{\rmi\theta_3}\big)}
	}
	\end{equation}
	with $k=1,\dots,N-3$.
\end{definition}
The $N-3$ components $\Cr_k$ are functionally independent. We note some important properties for the spaces $\OT$ and $\OCr$ as well as the cross-ratio function $\vec{\Cr}$ below.
\begin{lemma}
	The following two assertions hold:
	\begin{enumerate}
		\item The set $\OT$ is invariant under the action of $\MG$.
		\item The cross-ratio \eref{eq:CrossRatios} defines a smooth function $\vec{\Cr}:\OT\to\OCr$.
	\end{enumerate}
\end{lemma}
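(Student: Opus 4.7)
The plan is to treat the two assertions separately. For (i), every $\M{\alpha}{\psi}\in\MG$ is given by the explicit formula in the preamble, and a short direct computation shows $|\M{\alpha}{\psi}(\rme^{\rmi\theta})|=1$, so every element of $\MG$ restricts to a self-map of $\S$. To conclude that cyclic order is preserved, I would write $\M{\alpha}{\psi}(\rme^{\rmi\theta})=\rme^{\rmi\phi(\theta)}$ and show that the angular derivative is strictly positive by computing
\begin{equation*}
	\phi'(\theta)=\frac{1-|\alpha|^2}{|1+\alphabar\,\rme^{\rmi(\psi+\theta)}|^2}>0,
\end{equation*}
where the simplification on $\S$ uses the identity $(1+\alphabar w)(\alpha+w)=w\,|1+\alphabar w|^2$ valid for $|w|=1$. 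The restriction of $\M{\alpha}{\psi}$ to $\S$ is therefore an orientation-preserving diffeomorphism, hence preserves strict cyclic order of $N$-tuples of distinct points, and $\OT$ is stable under the diagonal $\MG$-action.

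The smoothness half of (ii) is immediate: on $\OT$ the phases are pairwise distinct modulo $2\pi$, so the denominators in \eref{eq:CrossRatios} never vanish, and each $\Cr_k$ is a rational function of the smooth inputs $\rme^{\rmi\theta_j}$. The substance of (ii) is therefore the claim $\vec\Cr(\OT)\subset\OCr$. For this I would use the invariance of the cross-ratio under \emph{all} Möbius transformations of $\hat{\mathbb{C}}$, not merely those in $\MG$: let $M$ be the unique Möbius transformation of $\hat{\mathbb{C}}$ sending $\rme^{\rmi\theta_1}\mapsto 0$, $\rme^{\rmi\theta_2}\mapsto 1$, $\rme^{\rmi\theta_3}\mapsto\infty$. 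Möbius transformations send generalized circles to generalized circles and are orientation-preserving on the Riemann sphere, so $M(\S)=\mathbb{R}\cup\{\infty\}$ and the counterclockwise cyclic order $\rme^{\rmi\theta_1},\dots,\rme^{\rmi\theta_N}$ transfers to the cyclic order $0,1,\infty,t_1,\dots,t_{N-3}$ on $\mathbb{R}\cup\{\infty\}$, where $t_k\defined M(\rme^{\rmi\theta_{k+3}})$. Tracing the induced orientation forces $-\infty<t_1<\dots<t_{N-3}<0$. Taking the limit $z_3\to\infty$ in the Möbius-invariant cross-ratio then gives
\begin{equation*}
	\Cr_k(\vec\theta)=\frac{-t_k}{1-t_k},
\end{equation*}
which for $t_k<0$ lies in $(0,1)$ and is a strictly decreasing function of $t_k$; this yields $1>\Cr_1>\dots>\Cr_{N-3}>0$, i.e.\ $\vec\Cr(\vec\theta)\in\OCr$.

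The main obstacle is not computational but conceptual: both assertions rest on the fact that Möbius transformations preserve cyclic order on the generalized circles they map to one another, a consequence of their being orientation-preserving self-diffeomorphisms of the Riemann sphere. I would cite this from \cite{Ahlfors_1953} rather than reprove it; once it is granted, both halves of the lemma follow from the short calculations above.
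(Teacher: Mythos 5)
Your proposal is correct, and for the substantive half of assertion (ii) it takes a genuinely different route from the paper. On assertion (i) you and the paper use the same idea---elements of $\MG$ restrict to orientation-preserving homeomorphisms of $\S$ and hence preserve strict cyclic order---except that the paper simply cites orientation preservation from Ahlfors, whereas you make the step self-contained by computing the angular derivative; your simplification $(1+\alphabar w)(\alpha+w)=w\abs{1+\alphabar w}^2$ for $\abs{w}=1$ and the resulting formula $\phi'(\theta)=(1-\abs{\alpha}^2)/\abs{1+\alphabar\,\rme^{\rmi(\psi+\theta)}}^2>0$ both check out. On assertion (ii), both proofs dispose of smoothness by noting the denominators cannot vanish on $\OT$; the real content is $\vec{\Cr}(\OT)\subset\OCr$. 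The paper argues entirely in the angle variables: it computes $\DD_{\theta_{k+3}}\Cr_k$ in closed trigonometric form, shows it is negative on $\OT$, and combines this monotonicity with the boundary limits $\theta_{k+3}\downarrow\theta_3$ and $\theta_{k+3}\uparrow\theta_1+2\pi$ to obtain both the range $(0,1)$ and the ordering $\Cr_k>\Cr_{k+1}$. You instead normalize by the Möbius map sending $\rme^{\rmi\theta_1},\rme^{\rmi\theta_2},\rme^{\rmi\theta_3}$ to $0,1,\infty$, transport the cyclic order to $\mathbb{R}\cup\{\infty\}$ to get $-\infty<t_1<\dots<t_{N-3}<0$, and read everything off from $\Cr_k=-t_k/(1-t_k)$; I verified that this identity agrees with the paper's definition \eref{eq:CrossRatios}, and the ordering conclusion is insensitive to whether the restriction of $M$ to $\S$ preserves or reverses cyclic order, so the argument is sound. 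Your approach buys conceptual clarity---it exhibits $\OCr$ as exactly the set of cross-ratios of cyclically ordered concyclic points, which is the structural reason the lemma is true---at the cost of invoking general Möbius geometry on $\hat{\mathbb{C}}$; the paper's approach is more elementary and yields the explicit derivative formula, which is in the same computational style as the neighbouring lemma~\ref{lm:PointOfReference}.
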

\begin{proof}
	The first assertion follows from the fact that all Möbius transformations $\M{\alpha}{\psi}\in\MG$ are orientation preserving \cite{Ahlfors_1953}. Let $\vec{\theta}\in\OT$ and $\rme^{\rmi\vec{\vartheta}}=\M{\alpha}{\psi}\left(\rme^{\rmi\vec{\theta}}\right)$. By preservation of orientation, we have $\vec{\vartheta}\in\OT$.
	
	The second assertion follows from the fact that all derivatives of $\Cr_k$ exist because all $\theta_j$ are distinct and that
	\begin{eqnarray*}
		\DD_{\theta_{k+3}}\Cr_k(\vec{\theta}) = \frac{1}{2} \frac{\sin\frac{\theta_1-\theta_2}{2}\sin\frac{\theta_2-\theta_3}{2}}{\sin\frac{\theta_1-\theta_3}{2}\sin^2\frac{\theta_2-\theta_{k+3}}{2}}<0, \qquad& k=1,\dots,N-3
	\end{eqnarray*}
	so that each $\Cr_k$ is strictly monotonically decreasing in $\theta_{k+3}$ on $\OT$. In particular, we find $\Cr_{k}(\vec{\theta})>\Cr_{k+1}(\vec{\theta})$ on $\OT$. Additionally, we find that  $\lim_{\theta_{k+3}\downarrow\theta_3}\Cr_k(\vec{\theta})=1$ and $\lim_{\theta_{k+3}\uparrow\theta_1+2\pi}\Cr_k(\vec{\theta})=0$ so that $\vec{\Cr}$ indeed maps $\OT$ to $\OCr$.
\end{proof}

As a next step, we define a coordinate system on $\OT$ in terms of $\alpha$, $\psi$ and the cross-ratios $\vec{\ccr}\in \OCr$. We start with the following lemma.

\begin{lemma}\label{lm:CREquivalence}
	For any two $\vec{\vartheta},\vec{\theta}\in\OT$, we have
	\begin{equation*}
		\vec{\vartheta}\sim\vec{\theta} \Leftrightarrow \vec{\Cr}(\vec{\vartheta}) = \vec{\Cr}(\vec{\theta})
	\end{equation*}
	and in particular, we have
	\begin{equation*}
		[\vec{\theta}] = \set{\vec{\vartheta}\in \OT}{\vec{\Cr}(\vec{\vartheta})=\vec{\Cr}(\vec{\theta})},
	\end{equation*}
	\ie, the equivalence classes $[\vec{\theta}]$ are exactly the level sets of the cross-ratio function $\vec{\Cr}$.
\end{lemma}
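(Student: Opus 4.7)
The forward implication is immediate: if $\vec{\vartheta}\sim\vec{\theta}$, then $\rme^{\rmi\vec{\vartheta}} = \M{\alpha}{\psi}(\rme^{\rmi\vec{\theta}})$ for some $\M{\alpha}{\psi}\in\MG$, and since every Möbius transformation of $\hat{\mathbb{C}}$ preserves the cross-ratio of any four points \cite{Ahlfors_1953}, applying this coordinatewise to $(\rme^{\rmi\theta_1},\rme^{\rmi\theta_2},\rme^{\rmi\theta_3},\rme^{\rmi\theta_{k+3}})$ yields $\Cr_k(\vec{\vartheta})=\Cr_k(\vec{\theta})$ for every $k$. So only the converse, together with the characterization of $[\vec{\theta}]$, requires actual work.

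For the converse, the plan is to exploit the classical fact that a Möbius transformation of $\hat{\mathbb{C}}$ is uniquely determined by its action on any three distinct points. First I would use this to construct the unique Möbius transformation $G$ of $\hat{\mathbb{C}}$ that maps $\rme^{\rmi\theta_j}\mapsto \rme^{\rmi\vartheta_j}$ for $j=1,2,3$. The crucial step is then to show $G\in\MG$, i.e., that $G$ restricts to an automorphism of $\D$. Since $\vec{\theta},\vec{\vartheta}\in\OT$, both triples $(\rme^{\rmi\theta_1},\rme^{\rmi\theta_2},\rme^{\rmi\theta_3})$ and $(\rme^{\rmi\vartheta_1},\rme^{\rmi\vartheta_2},\rme^{\rmi\vartheta_3})$ lie on $\partial\D=\S$ in the same (counterclockwise) cyclic order; $G$ therefore sends the oriented circle $\partial\D$ to itself preserving orientation, which forces $G(\D)=\D$ and hence $G\in\MG$ by \cite{Ahlfors_1953}.

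Next I would extend the match from the first three coordinates to all of them. For each $k=1,\dots,N-3$, Möbius invariance gives
\begin{equation*}
\Cr_{1,2,3,k+3}\bigl(\rme^{\rmi\vec{\theta}}\bigr)=\Cr_{1,2,3,k+3}\bigl(G(\rme^{\rmi\theta_1}),G(\rme^{\rmi\theta_2}),G(\rme^{\rmi\theta_3}),G(\rme^{\rmi\theta_{k+3}})\bigr),
\end{equation*}
which by hypothesis equals $\Cr_{1,2,3,k+3}(\rme^{\rmi\vec{\vartheta}})$. Since $G(\rme^{\rmi\theta_j})=\rme^{\rmi\vartheta_j}$ for $j=1,2,3$, and since the map $z\mapsto \Cr_{p,q,r}(z)$ with three distinct points fixed is itself a Möbius transformation of $\hat{\mathbb{C}}$ and hence injective, this forces $G(\rme^{\rmi\theta_{k+3}})=\rme^{\rmi\vartheta_{k+3}}$ for every $k$. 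Thus $G(\rme^{\rmi\vec{\theta}})=\rme^{\rmi\vec{\vartheta}}$, which shows $\vec{\vartheta}\sim\vec{\theta}$. The set-theoretic identity for $[\vec{\theta}]$ follows by combining both implications.

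The only real obstacle is the claim that $G\in\MG$; the rest is either Möbius-invariance or uniqueness of a fractional linear map, both standard. The cyclic-order hypothesis encoded in $\OT$ is exactly what makes the disk-preserving conclusion automatic: without it one could only conclude that $G$ is an automorphism of $\hat{\mathbb{C}}$ mapping $\partial\D$ to itself, which would leave open the possibility of an orientation-reversing map that swaps $\D$ with its exterior.
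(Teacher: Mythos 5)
Your proof is correct and follows essentially the same route as the paper's: the forward direction is Möbius invariance of cross-ratios, and the converse builds the disk automorphism $G$ sending the first three phases to their counterparts, uses the cyclic-order hypothesis to conclude $G\in\MG$, and extends the match to the remaining coordinates via injectivity of the cross-ratio in its last slot. The only cosmetic difference is that the paper realizes $G$ explicitly as $\mu_{\vec{\vartheta}}^{-1}\circ\mu_{\vec{\theta}}$ (which is precisely the standard proof of the three-point existence-uniqueness fact you invoke as a black box).
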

\begin{proof}
	($\Rightarrow$): Since $\vec{\vartheta}\sim\vec{\theta}$, there exists a $\M{\alpha}{\psi}\in\MG$ with $\rme^{\rmi\vec{\vartheta}}=\M{\alpha}{\psi}\left(\rme^{\rmi\vec{\theta}}\right)$. Then $\vec{\Cr}(\vec{\vartheta})=\vec{\Cr}(\vec{\theta})$ follows the invariance of cross-ratios under arbitrary Möbius transformations.
	
	($\Leftarrow$): Let  $\vec{\Cr}(\vec{\vartheta})=\vec{\Cr}(\vec{\theta})$ for some $\vec{\vartheta},\vec{\theta}\in \OT$. We construct a suitable $\M{\alpha}{\psi}\in\MG$ for which $\M{\alpha}{\psi}\left(\rme^{\rmi\vec{\theta}}\right)=\rme^{\rmi\vec{\vartheta}}$ holds. Consider the two general Möbius transformations $\mu_{\vec{\theta}}:\hat{\mathbb{C}}\to\hat{\mathbb{C}}$ and $\mu_{\vec{\vartheta}}:\hat{\mathbb{C}}\to\hat{\mathbb{C}}$ with
	\begin{eqnarray*}
		\mu_{\vec{\theta}}(z) &\defined \frac{\Big(z-\rme^{\rmi\theta_1}\Big) \Big(\rme^{\rmi\theta_2}-\rme^{\rmi\theta_3}\Big)} {\Big(z-\rme^{\rmi\theta_2}\Big)\Big(\rme^{\rmi\theta_1}-\rme^{\rmi\theta_3}\Big)}, \qquad
		\mu_{\vec{\vartheta}}(z) &\defined \frac{\Big(z-\rme^{\rmi\vartheta_1}\Big)\Big(\rme^{\rmi\vartheta_2}-\rme^{\rmi\vartheta_3}\Big)} {\Big(z-\rme^{\rmi\vartheta_2}\Big)\Big(\rme^{\rmi\vartheta_1}-\rme^{\rmi\vartheta_3}\Big)}.
	\end{eqnarray*}
	These maps fulfill
	\begin{eqnarray*}
		\fl
		\mu_{\vec{\theta}}\left(\rme^{\rmi\theta_1}\right) = \mu_{\vec{\vartheta}}\left(\rme^{\rmi\vartheta_1}\right)=0, \qquad
		\mu_{\vec{\theta}}\left(\rme^{\rmi\theta_2}\right) = \mu_{\vec{\vartheta}}\left(\rme^{\rmi\vartheta_2}\right)=\infty, \qquad
		\mu_{\vec{\theta}}\left(\rme^{\rmi\theta_3}\right) = \mu_{\vec{\vartheta}}\left(\rme^{\rmi\vartheta_3}\right)=1
	\end{eqnarray*}
	so that the function $G=\mu_{\vec{\vartheta}}^{-1}\circ \mu_{\vec{\theta}}$ maps $\rme^{\rmi\theta_k}$ to $\rme^{\rmi\vartheta_k}$ for $k=1,2,3$. In particular, $G$ is a bijective conformal map from $\partial\D$ to $\partial\D$. It is also orientation preserving since  $\rme^{\rmi\theta_1},\rme^{\rmi\theta_2},\rme^{\rmi\theta_3}\in\partial\D$ and $\rme^{\rmi\vartheta_1},\rme^{\rmi\vartheta_2},\rme^{\rmi\vartheta_3}\in\partial\D$ are in the same cyclic order by assumption. $G$ is therefore an element of $\MG$. Our assumption $\Cr_k(\vec{\theta})=\Cr_k(\vec{\vartheta})$ implies that 
	\begin{equation*}
		\mu_{\vec{\theta}}\big(\rme^{\rmi \theta_{k+3}}\big)=\Cr_k(\vec{\theta}) = \Cr_k(\vec{\vartheta})=\mu_{\vec{\vartheta}}\big(\rme^{\rmi \vartheta_{k+3}}\big).
	\end{equation*}
	and $G\left(\rme^{\rmi \theta_k}\right)=\rme^{\rmi \vartheta_k}$ holds also for all $k=4,\dots,N$. We conclude that $G\left(\rme^{\rmi\vec{\theta}}\right) = \rme^{\rmi\vec{\vartheta}}$ which finishes the proof.
\end{proof}

Lemma~\ref{lm:CREquivalence} implies that we can parameterize the partition of $\OT$ in equivalence classes $[\vec{\theta}]\subset \OT$ via the cross-ratios $\vec{\ccr}\in \OCr$ such that we may set
\begin{equation*}
	[\vec{\theta}] \equiv \level{\vec{\ccr}} \defined \set{\vec{\vartheta}\in \OT}{\vec{\Cr}(\vec{\vartheta})=\vec{\ccr}}.
\end{equation*}
where $\vec{\ccr}=\vec{\Cr}(\vec{\theta})$. Each $\vec{\theta}\in \OT$ is an element of the level set $ \level{\vec{\ccr}}$ with $\vec{\ccr}=\vec{\Cr}(\vec{\theta})$. We have to introduce a suitable coordinate system on each set $ \level{\vec{\ccr}}$.

The parameters $\alpha$ and $\psi$ do not yet establish a coordinate system on $\level{\vec{\ccr}}$ but only parameterize how to transform any given point in $\level{\vec{\ccr}}$ to any other point in the same level set rather than uniquely determining where each point is located in $\level{\vec{\ccr}}$. This ambiguity comes from the fact that no a priori ``point-of-reference'' exists in $\level{\vec{\ccr}}$ relative to which all other points can are parameterized. To introduce such a unique point-of-reference $\vec{\Theta}(\vec{\ccr})$ for each $\level{\vec{\ccr}}$, we define $\vec{\Theta}:\OCr\to\OT$ by
%To remove this ambiguity, we therefore need to introduce the notion of such a unique point-of-reference $\vec{\Theta}(\vec{\ccr})$ for each $\level{\vec{\ccr}}$. For this, we set \footnote{Here and in what follows, we use the principal value $\Log z$ for the logarithm $\log z$ but for convenience, let its imaginary part lie in the interval $[-\pi,+\pi)$ instead of $(-\pi,+\pi]$.}
\begin{equation}\label{eq:DefTheta}
\eqalign{
	\vec{\Theta}(\vec{\ccr})&\defined(\Theta_1(\vec{\ccr}),\dots,\Theta_N(\vec{\ccr})) \\
	\Theta_k(\vec{\ccr})&\defined
	\cases{
		-\pi+\frac{2\pi}{N}(k-1) &for $k=1,2,3$\\
		-\rmi\,\Log \frac{\rme^{\frac{2\pi\rmi}{N}}\left(\ccr_{k-3} +\ccr_{k-3}\,\rme^{\frac{2\pi \rmi}{N}}-1\right)}{-\ccr_{k-3}+(1-\ccr_{k-3})\,\rme^{\frac{2\pi \rmi}{N}}} &for $k=4,\dots,N$.
	}
}
\end{equation}
To see that this choice is suitable for our purposes, we need to show that it defines for each level set $ \level{\vec{\ccr}}$ a unique point-of-reference, \ie, that the image of $\vec{\Theta}$ intersects each $ \level{\vec{\ccr}}$ in exactly one point. We can define for each point $\vec{\theta}\in \OT$ its WS-coordinates $\left(\alpha,\psi,\vec{\ccr}\right)$
via the relation
\begin{equation*}%\label{eq:WSChart}
	\rme^{\rmi\vec{\theta}} = \M{\alpha}{\psi}\left(\rme^{\rmi\vec{\Theta}(\vec{\ccr})}\right)
\end{equation*}
where $\vec{\ccr}$ determines in \emph{which} level set $\level{\vec{\ccr}}$ the point $\vec{\theta}$ lies while $\alpha$ and $\psi$ determine \emph{where} $\vec{\theta}$ is located \emph{in} $\level{\vec{\ccr}}$ with respect to $\vec{\Theta}(\vec{\ccr})$. This is the purpose of the following lemma.

\begin{lemma}\label{lm:PointOfReference}
	The map $\vec{\Theta}:\OCr\to\OT$, defined by \eref{eq:DefTheta}, is smooth and a right inverse of the function $\vec{\Cr}$, \ie, $\vec{\Cr}\circ\vec{\Theta}(\vec{\ccr})=\vec{\ccr}$ and $\vec{\Theta}(\vec{\ccr})\in\level{\vec{\ccr}}$.
\end{lemma}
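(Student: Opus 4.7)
The plan is to observe that \eref{eq:DefTheta} is exactly the explicit solution of the cross-ratio equations $\Cr_k(\vec{\Theta}(\vec{\ccr}))=\ccr_k$ once the first three angles are pinned down as the equispaced values $\Theta_j=-\pi+\frac{2\pi}{N}(j-1)$. Writing $\zeta_j=\rme^{\rmi\Theta_j}$ and substituting $\zeta_1=-1$, $\zeta_2=-\rme^{2\pi\rmi/N}$, $\zeta_3=-\rme^{4\pi\rmi/N}$ into $\Cr_{1,2,3,k+3}=\ccr_k$ gives a fractional-linear equation in $\zeta_{k+3}$ which can be solved by direct rearrangement. Checking that the resulting rational expression matches the argument of $\Log$ in \eref{eq:DefTheta} is a short calculation, and it immediately yields $\vec{\Cr}\circ\vec{\Theta}(\vec{\ccr})=\vec{\ccr}$ and $\vec{\Theta}(\vec{\ccr})\in\level{\vec{\ccr}}$ at the algebraic level, provided $\vec{\Theta}(\vec{\ccr})$ is a well-defined element of $\OT$.

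For well-definedness I would verify $|\zeta_{k+3}|=1$, so that $-\rmi\Log(\zeta_{k+3})$ is real. The cleanest argument is geometric, using the Möbius transformation $\mu_{\vec{\Theta}_{1,2,3}}$ from the proof of lemma~\ref{lm:CREquivalence}: it sends three distinct points of $\partial\D$ to $0,\infty,1\in\hat{\mathbb{C}}$, and by the standard fact that Möbius maps send generalized circles to generalized circles, it maps $\partial\D$ bijectively onto $\mathbb{R}\cup\{\infty\}$. The preimage of $\ccr_k\in(0,1)\subset\mathbb{R}$ must therefore lie on $\partial\D$. A direct verification that the numerator and denominator in \eref{eq:DefTheta} have equal modulus provides a backup.

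To place $\vec{\Theta}(\vec{\ccr})$ in $\OT$ I would argue by monotonicity. The restriction of $\mu_{\vec{\Theta}_{1,2,3}}$ to the open arc of $\partial\D$ running counterclockwise from $\zeta_3$ to $\zeta_1=-1$ (and avoiding $\zeta_2$) is a smooth bijection onto $(0,1)$, so its inverse is strictly monotone. Evaluating at $\ccr=1/2$, where the $\Log$-argument collapses to $\rme^{2\pi\rmi/N}$, pins the direction so that $\ccr\mapsto\Theta_{k+3}(\vec{\ccr})$ is strictly decreasing. Combined with the ordering $1>\ccr_1>\dots>\ccr_{N-3}>0$ built into $\OCr$, this gives $\Theta_3<\Theta_4(\vec{\ccr})<\dots<\Theta_N(\vec{\ccr})<\Theta_1+2\pi$, which together with $\Theta_1<\Theta_2<\Theta_3$ yields $\vec{\Theta}(\vec{\ccr})\in\OT$.

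Smoothness is the last step and carries the main technical subtlety. The denominator $-\ccr+(1-\ccr)\rme^{2\pi\rmi/N}$ has imaginary part $(1-\ccr)\sin(2\pi/N)$, which is strictly positive on $\ccr\in(0,1)$ for $N\geq 4$, so the rational expression inside $\Log$ is $C^\infty$ on $\OCr$. The main obstacle is the branch cut of the principal logarithm along $(-\infty,0]$: the monotonicity argument above shows that, as $\ccr$ decreases from $1$ to $0$, the argument of $\Log$ traces the arc of $\partial\D$ from $\zeta_3$ through $+1$ up to $-1$, and meets the branch cut only in the boundary limit $\ccr\to 0$. Hence on the open set $\OCr$ the logarithm is smooth, and $\vec{\Theta}$ is a $C^\infty$ right inverse of $\vec{\Cr}$, as claimed.
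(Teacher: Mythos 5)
Your proposal is correct, and it reaches the conclusions of the lemma by a genuinely different route than the paper. The paper's proof is computational and self-contained: it establishes $\vec{\Cr}\circ\vec{\Theta}=\vec{\ccr}$ by noting \eref{eq:DefTheta} was obtained by explicitly solving the cross-ratio equation, argues well-definedness by showing numerator and denominator of the $\Log$-argument never vanish for real $\ccr$, and proves that $\vec{\Theta}$ maps into $\OT$ by explicitly differentiating,
\[
\DD_{\ccr_{k-3}}\Theta_k(\vec{\ccr})=\frac{-2\sin\frac{2\pi}{N}}{1-2\ccr_{k-3}(1-\ccr_{k-3})\big(1+\cos\frac{2\pi}{N}\big)}<0,
\]
together with the boundary limits as $\ccr_1\uparrow 1$ and $\ccr_{N-3}\downarrow 0$. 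You instead reuse the conformal machinery from lemma~\ref{lm:CREquivalence}: the modulus-one check via the circle-preserving property of Möbius maps (or the equivalent direct identity $|{\rm num}|=|{\rm den}|$), and monotonicity as an automatic consequence of the fact that $\mu_{\vec{\Theta}_{1,2,3}}$ restricted to the arc of $\partial\D$ omitting $\zeta_2$ is a continuous bijection onto $(0,1)$, with the sign of the monotonicity pinned by the elementary evaluation at $\ccr=1/2$. The two approaches have complementary virtues: the paper's derivative gives a quantitative bound on $\DD\Theta_k$ that could be reused elsewhere, while your argument is shorter once the Möbius setup is in place and makes the geometric content transparent. A further point in your favor: you explicitly address the branch cut of $\Log$ along $(-\infty,0]$, observing that the $\Log$-argument traces an arc of $\partial\D$ reaching $-1$ only in the limit $\ccr\to 0$, which lies on the boundary of $\OCr$; the paper's nonvanishing argument alone does not strictly preclude a crossing of the cut, so your treatment of smoothness is the more complete one.
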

\begin{proof}
	$\vec{\Theta}$ is well-defined and smooth since the the numerator in the second line of \eref{eq:DefTheta} vanishes only for $\ccr_k=1/\left(1+\exp(2\pi\rmi/N)\right)\notin\mathbb{R}$ while the denominator vanishes only for $\ccr_k=\exp(2\pi\rmi/N)/\left(-1+\exp(2\pi\rmi/N)\right)\notin\mathbb{R}$. \Eref{eq:DefTheta} solves the equation $\vec{\Cr}(\vec{\Theta})=\vec{\ccr}$ for $\vec{\Theta}$ and so is the right inverse of $\vec{\Cr}$ by construction. Finally, we show that $\vec{\Theta}$ indeed maps $\OCr$ to $\OT$. For this, we show that for every $k>3$, $\Theta_{k}(\vec{\ccr})$ is strictly monotonically decreasing in $\ccr_{k-3}$ and that $\Theta_3(\vec{\ccr})>\Theta_4(\vec{\ccr})$. The assertion follows from the fact that $1>\ccr_1>\dots>\ccr_{N-3}>0$ and the limits
	\begin{eqnarray*}
		\lim_{\ccr_{1}\uparrow1}\Theta_4(\vec{\ccr}) = -\pi+\frac{2\pi}{N}2 \qquad\textrm{and}\qquad
		\lim_{\ccr_{N-3}\downarrow0}\Theta_N(\vec{\ccr}) = \pi.
	\end{eqnarray*}
	But since $N\geq4$, we find
	\begin{equation*}%\label{eq:DerivativeTheta}
	\DD_{\ccr_{k-3}}\Theta_{k}(\vec{\ccr}) = \frac{-2\sin\frac{2\pi}{N}}{1-\underbrace{2\ccr_{k-3}(1-\ccr_{k-3})}_{\in(0,1/2]}\underbrace{\left(1+\cos\frac{2\pi}{N}\right)}_{\in[1,2)}}<0
	\end{equation*}
	so that $\Theta_{k}$ is indeed strictly monotonically decreasing in $\ccr_{k-3}$ for all $k=4,\dots,N$.
\end{proof}

\begin{remark}
	In what follows, particular focus lies on the point
	\begin{equation}\label{eq:DefThetaTilde}
	\eqalign{
		\vec{\thetat}\defined\left(\thetat_1,\dots,\thetat_N\right)\qquad\textrm{with}\qquad
		\thetat_j \defined -\pi+\frac{2\pi}{N}(j-1)
	}
	\end{equation}
	for which we find
	\begin{equation}\label{eq:DefLambdaTilde}
	\eqalign{
		\vec{\ccrt}\defined\left(\ccrt_1,\dots,\ccrt_{N-3}\right)\qquad\textrm{with}\qquad
		\ccrt_k \defined \Cr_k(\vec{\thetat}) = \frac{\sin\frac{\pi(k+2)}{N}}{2\cos\frac{\pi}{N}\sin\frac{\pi(k+1)}{N}}.
	}
	\end{equation}
\end{remark}

We state the final result of this section on the existence of the well-defined WS-coordinate system on $\OT$ in the following proposition.
\begin{proposition}\label{lm:WSChart}
	The map $\m:\D\times\S\times \OCr\to \OT$ with
	\begin{equation*}%\label{eq:defm}
	\m(\alpha,\psi,\vec{\ccr})\defined -\rmi\,\Log \M{\alpha}{\psi}\left(\rme^{\rmi\vec{\Theta}(\vec{\ccr})}\right)=-\rmi\,\Log\frac{\alpha+\rme^{\rmi\psi}\rme^{\rmi\vec{\Theta}(\vec{\ccr})}}{1+\alphabar\,\rme^{\rmi\psi}\rme^{\rmi\vec{\Theta}(\vec{\ccr})}}
	\end{equation*}
	is a smooth diffeomorphism.
\end{proposition}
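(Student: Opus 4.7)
The plan is to verify smoothness, bijectivity, and local invertibility of $\m$ in turn, using lemmas~\ref{lm:CREquivalence} and~\ref{lm:PointOfReference} as the essential inputs.

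For smoothness, the component $\vec{\Theta}$ is smooth by lemma~\ref{lm:PointOfReference}, and $G_{\alpha,\psi}(z)$ is a rational function of $(\alpha,\psi,z)$ without singularities for $z\in\partial\D$ and $\alpha\in\D$. Hence $(\alpha,\psi,\vec{\ccr})\mapsto G_{\alpha,\psi}(\rme^{\rmi\vec{\Theta}(\vec{\ccr})})$ is smooth as a map into $(\partial\D)^N$. Composing componentwise with a local inverse of $\rme^{\rmi\cdot}:\S\to\partial\D$, which is a local diffeomorphism, yields smoothness of $\m$ as a $\T^N$-valued map; any branch ambiguity of $\Log$ is harmless because the codomain is $\T^N=\mathbb{R}^N/(2\pi\mathbb{Z})^N$ rather than $\mathbb{R}^N$.

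For bijectivity, given $\vec{\theta}\in\OT$ we set $\vec{\ccr}\defined\vec{\Cr}(\vec{\theta})$. Lemma~\ref{lm:PointOfReference} places $\vec{\Theta}(\vec{\ccr})\in\level{\vec{\ccr}}$, so lemma~\ref{lm:CREquivalence} supplies $G_{\alpha,\psi}\in\MG$ with $\rme^{\rmi\vec{\theta}}=G_{\alpha,\psi}(\rme^{\rmi\vec{\Theta}(\vec{\ccr})})$, establishing surjectivity. For injectivity, suppose $\m(\alpha_1,\psi_1,\vec{\ccr}_1)=\m(\alpha_2,\psi_2,\vec{\ccr}_2)$. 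Applying the Möbius-invariant function $\vec{\Cr}$ to both sides forces $\vec{\ccr}_1=\vec{\ccr}_2$, and then $G_{\alpha_2,\psi_2}^{-1}\circ G_{\alpha_1,\psi_1}\in\MG$ fixes the three distinct points $\rme^{\rmi\Theta_k(\vec{\ccr}_1)}$, $k=1,2,3$, on $\partial\D$; since any Möbius transformation fixing three boundary points is the identity, we obtain $(\alpha_1,\psi_1)=(\alpha_2,\psi_2)$ by the unique $\D\times\S$ parameterization of $\MG$.

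To promote the smooth bijection to a diffeomorphism, we apply the inverse function theorem: both domain and codomain have dimension $N=2+1+(N-3)$, so it suffices to show that $\DD\m$ is invertible everywhere. Equivalently, one may construct the inverse explicitly by first reading off $\vec{\ccr}=\vec{\Cr}(\vec{\theta})$ and then solving the three-point correspondence $\rme^{\rmi\Theta_k(\vec{\ccr})}\mapsto\rme^{\rmi\theta_k}$, $k=1,2,3$, for the unique $(\alpha,\psi)\in\D\times\S$, which is furnished by a smooth rational formula in the boundary values. The main obstacle I anticipate is the bookkeeping required to confirm that this closed-form solution indeed produces parameters inside $\D\times\S$ and depends smoothly on $\vec{\theta}$; since $\MG$ preserves $\partial\D$ and the cyclic order of the triples on both sides agrees, this follows from the explicit inversion of a Möbius transformation pinned down by three boundary points.
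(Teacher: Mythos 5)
Your smoothness, injectivity, and surjectivity arguments are correct and match the paper's in substance — both rely on lemma~\ref{lm:CREquivalence} and lemma~\ref{lm:PointOfReference}, and your handling of the branch ambiguity of $\Log$ by viewing the codomain as $\T^N$ is a clean observation. Where you diverge is in the final step. The paper proceeds by computing $\DD\m$: it first shows $\DD_{\vec{\ccr}}\m$ has full column rank because $\DD_{\vec{\theta}}\vec{\Cr}$ is a left inverse, then shows $\DD_\alpha\m,\DD_{\alphabar}\m,\DD_\psi\m$ are transversal to that block via the relations $\DD_{\vec{\theta}}\vec{\Cr}\cdot\DD_\alpha\m=0$, and finally establishes independence of those three columns by bounding the determinant of an explicit $3\times3$ submatrix away from zero ($\abs{\det\hat{A}}\geq(\pi/N)^3(1-\abs{\alpha}^2)/(1+\abs{\alpha})^6$), invoking the inverse function theorem at the end. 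Your alternative — build $\m^{-1}$ explicitly as $\vec{\theta}\mapsto(\alpha(\vec{\theta}),\psi(\vec{\theta}),\vec{\Cr}(\vec{\theta}))$, where $(\alpha,\psi)$ are the parameters of the unique $G\in\MG$ with $G\big(\rme^{\rmi\Theta_k}\big)=\rme^{\rmi\theta_k}$ for $k=1,2,3$ — is a genuinely different and equally valid route, and arguably cleaner because it bypasses the determinant computation entirely. To close the ``bookkeeping'' you flag, note that $\Theta_1,\Theta_2,\Theta_3$ are constants, so $G=\mu_{\vec{\vartheta}}^{-1}\circ\mu_{\vec{\theta}}$ depends rationally on $\rme^{\rmi\theta_1},\rme^{\rmi\theta_2},\rme^{\rmi\theta_3}$; then $\alpha=G(0)\in\D$ (since $G\in\MG$ by the cyclic-order argument already used in lemma~\ref{lm:CREquivalence}) and $\rme^{\rmi\psi}=G'(0)/\abs{G'(0)}$ with $G'(0)\neq0$ are both smooth in $\vec{\theta}$, and cross-ratio invariance guarantees $G$ also matches the remaining $N-3$ points. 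Either route establishes the diffeomorphism; yours trades the paper's explicit lower bound (which is not reused elsewhere) for a more conceptual construction.
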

\begin{proof}	
	As a composition of the smooth maps $(\alpha,\psi,\vec{\theta})\mapsto\M{\alpha}{\psi}(\rme^{\rmi\vec{\theta}})$, $\vec{\Theta}$, and the $\Log$-function, $\m$ is smooth. We show that $\m$ is also bijective and $\DD\m$ has full rank.
	
	Injectivity of $\m$: Suppose that for $(\alpha,\psi,\vec{\ccr}),(\alpha',\psi',\vec{\ccr}')\in\D\times\S\times \OCr$ the equality $\m(\alpha,\psi,\vec{\ccr})=\m(\alpha',\psi',\vec{\ccr}')$ holds. We find
	\begin{eqnarray*}
		-\rmi\,\Log\M{\alpha}{\psi}\left(\rme^{\rmi\vec{\Theta}(\vec{\ccr})}\right) =-\rmi\,\Log\M{\alpha'}{\psi'}\left(\rme^{\rmi\vec{\Theta}(\vec{\ccr}')}\right) \\
		\Rightarrow\rme^{\rmi\vec{\Theta}(\vec{\ccr})} = \M{\alpha}{\psi}^{-1}\circ\M{\alpha'}{\psi'}\left(\rme^{\rmi\vec{\Theta}(\vec{\ccr}')}\right).
	\end{eqnarray*}
	By lemma~\ref{lm:CREquivalence}, this implies
	\begin{eqnarray*} 
		\vec{\Theta}(\vec{\ccr})\in[\vec{\Theta}(\vec{\ccr}')]= \level{\vec{\ccr}'}\quad 
		\Rightarrow\quad \vec{\ccr} =\vec{\Cr}\circ\vec{\Theta}(\vec{\ccr})= \vec{\ccr}'.
	\end{eqnarray*}
	Further, $\vec{\Theta}(\vec{\ccr})$ always possesses at least three distinct coordinates (e.g. $\Theta_1$, $\Theta_2$, and $\Theta_3$). But since a Möbius map is uniquely defined by the images of three distinct points, we have $\M{\alpha}{\psi}\left(\rme^{\rmi\vec{\Theta}(\vec{\ccr})}\right)=\M{\alpha'}{\psi'}\left(\rme^{\rmi\vec{\Theta}(\vec{\ccr})}\right) \Leftrightarrow \M{\alpha}{\psi}=\M{\alpha'}{\psi'}\Leftrightarrow(\alpha,\psi)=(\alpha',\psi')$ and $(\alpha,\psi,\vec{\ccr})=(\alpha',\psi',\vec{\ccr}')$.
	
	Surjectivity of $\m$: For any $\vec{\theta}\in \OT$, the cross-ratios  $\vec{\ccr}=\vec{\Cr}(\vec{\theta})\in \OCr$ are well defined. Since $\vec{\Theta}(\vec{\ccr})$ and $\vec{\theta}$ are both elements of $\level{\vec{\ccr}}=[\vec{\Theta}(\vec{\ccr})]=[\vec{\theta}]$, there exists a $\M{\alpha}{\psi}\in\MG$ with $\M{\alpha}{\psi}\left(\rme^{\rmi\vec{\Theta}(\vec{\ccr})}\right) = \rme^{\rmi\vec{\theta}}$. But this implies the existence of an $(\alpha,\psi,\vec{\ccr})\in\D\times\S\times \OCr$ which is mapped by $\m$ to $\vec{\theta}$.
	
	At last, we show that the derivative $\DD \m$ has full rank everywhere. For each $x\in\{\alpha,\alphabar,\psi,\ccr_1,\dots,\ccr_{N-3}\}$, let $\DD_x \m=(\DD_x \m_1,\dots,\DD_x \m_N)^T$ denote the respective column of $\DD \m=\left[\DD_{\alpha}\m,\DD_{\alphabar}\m,\DD_{\psi}\m,\DD_{\ccr_1}\m,\dots,\DD_{\ccr_{N-3}}\m\right]$. 
	From the identity
	\begin{equation*}
		\vec{\Cr}(\m(\alpha,\psi,\vec{\ccr})) = \vec{\ccr}
	\end{equation*}
	we note first that $\DD_{\vec{\theta}}\vec{\Cr}\cdot\DD_{\vec{\ccr}}\m=\id_{N-3}$ or more explicitly
	\begin{equation}\label{eq:leftInverse}
		\pmatrix{
			\DD_{\theta_1}\Cr_1 & \dots  & \DD_{\theta_N}\Cr_1 \cr
			\vdots              & \ddots & \vdots             \cr
			\DD_{\theta_1}\Cr_{N-3} & \dots  & \DD_{\theta_N}\Cr_{N-3}
		}\cdot
		\pmatrix{
			\DD_{\ccr_1}m_1 & \dots  & \DD_{\ccr_{N-3}}m_1 \cr
			\vdots          & \ddots & \vdots \cr
			\DD_{\ccr_1}m_N & \dots  & \DD_{\ccr_{N-3}}m_N 
		} = \id_{N-3}
	\end{equation}
	where $\id_{N-3}$ is the $(N-3)$-dimensional identity matrix. Thus, $\DD_{\vec{\ccr}}\m$ has a left inverse $\DD_{\vec{\theta}}\vec{\Cr}$ and therefore has in particular full column rank.%This implies that the column vectors $\DD_{\ccr_j}m=(\DD_{\ccr_j}m_1,\dots,\DD_{\ccr_j}m_N)^T$ are linearly independent: the $j$th column from the second matrix above has a non-vanishing dot product only with the $j$th row of the first matrix while any linear combination of the other columns yields a zero dot product with that row.
	
	Secondly, we have 
	\begin{eqnarray}\label{eq:tangential}
		\DD_{\vec{\theta}}\vec{\Cr}\cdot\DD_{\alpha}\m = \DD_{\vec{\theta}}\vec{\Cr}\cdot\DD_{\alphabar}\m = \DD_{\vec{\theta}}\vec{\Cr}\cdot\DD_{\psi}\m = 0
	\end{eqnarray}
	so the column vectors $\DD_{\alpha}\m$, $\DD_{\alphabar}\m$, and $\DD_{\psi}\m$ are orthogonal to the column vectors $\DD_{\theta_j}\vec{\Cr}$ and thus linearly independent of the column vectors $\DD_{\ccr_k}\m$: if any linear combination
	\begin{equation*}
		\vec{y} = a\,\DD_{\alpha}\m + b\,\DD_{\alphabar}\m + c\,\DD_{\psi}\m
	\end{equation*}
	was in the span of the $\DD_{\ccr_k}\m$, \ie, if we would have
	\begin{equation*}
		\vec{y} = \sum_{j=1}^{N-3} c_j \DD_{\ccr_j}\m
	\end{equation*}
	with $c_k\neq0$ for some $k\in\{1,\dots,N-3\}$, we would find
	\begin{eqnarray*}
		0 = \DD_{\vec{\theta}}\Cr_k\cdot\vec{y}
		= \sum_{j=1}^{N-3}c_j \DD_{\vec{\theta}}\Cr_k\cdot\DD_{\ccr_j}\m
		= c_k
	\end{eqnarray*}
	where the first equality follows from \eref{eq:tangential} and the last from \eref{eq:leftInverse}. This contradicts the assumption that $c_k\neq0$.
	
	The three vectors $\DD_{\alpha}\m$, $\DD_{\alphabar}\m$, and $\DD_{\psi}\m$ are linearly independent. To see this, consider the $(N-3)\times3$ matrix $A=\left[\DD_{\alpha}\m,\DD_{\alphabar}\m,\DD_{\psi}\m\right]$ with $\rank A\leq3$. For any matrix, its rank is equal to the largest order square submatrix with nonzero determinant \cite{Gantmacher_2000}. Let the submatrix $\hat{A}$ consist of the first three components of each column of $A$. It is of the form
	\begin{eqnarray*}
	\hat{A}
		= \pmatrix{
		\frac{\displaystyle\rmi}{\displaystyle\rme^{\rmi \psi}-\alpha} &
		\frac{\displaystyle\rmi \rme^{\rmi \psi }}{\displaystyle\alphabar  \rme^{\rmi
				\psi}-1} & \frac{\displaystyle\rme^{\rmi \psi} (\abs{\alpha}^2 -1)}{\displaystyle(\rme^{\rmi \psi}-\alpha)(\alphabar  \rme^{\rmi \psi}-1)} \cr
		\frac{\displaystyle\rmi}{\displaystyle\rme^{\rmi\psi}\zeta_N-\alpha} & \frac{\displaystyle\rmi
			\rme^{\rmi\psi}\zeta_N}{\displaystyle\alphabar \rme^{\rmi\psi}\zeta_N-1}
		& \frac{\displaystyle\rme^{\rmi\psi}\zeta_N(\abs{\alpha}^2 -1)}{\displaystyle( \rme^{\rmi\psi}\zeta_N-\alpha)(\alphabar \rme^{\rmi\psi}\zeta_N-1)} \cr
		\frac{\displaystyle\rmi}{\displaystyle\rme^{\rmi\psi}\zeta_N^2-\alpha} & \frac{\displaystyle\rmi
			\rme^{\rmi\psi}\zeta_N^2}{\displaystyle\alphabar \rme^{\rmi\psi}\zeta_N^2-1}
		& \frac{\displaystyle\rme^{\rmi\psi}\zeta_N^2(\abs{\alpha}^2 -1)}{\displaystyle(\rme^{\rmi\psi}\zeta_N^2-\alpha)(\alphabar  \rme^{\rmi\psi}\zeta_N^2-1)}
		},
	\end{eqnarray*}
%\begin{eqnarray*}
%	\fl\hat{A}
%	= \pmatrix{
%		\frac{\rmi}{-\alpha +\exp(\rmi \psi )} &
%		\frac{\rmi \exp(\rmi \psi )}{-1+\alphabar  \exp(\rmi
%			\psi)} & \frac{\exp(\rmi \psi ) \left(\abs{\alpha}^2 -1\right)}{\left(-\alpha +\exp(\rmi \psi)\right) \left(-1+\alphabar  \exp(\rmi \psi)\right)} \cr
%		\frac{\rmi}{-\alpha +\exp\left(\rmi\psi+\frac{2
%				\pi \rmi}{N}\right)} & \frac{\rmi
%			\exp\left(\rmi\psi+\frac{2 \pi\rmi}{N}\right)}{-1+\alphabar \exp\left(\rmi\psi+\frac{2 \pi \rmi}{N}\right)}
%		& \frac{\left(\abs{\alpha}^2 -1\right) \exp\left(\rmi\psi+\frac{2 \pi \rmi}{N}\right)}{\left(-\alpha +\exp\left(\rmi\psi+
%			\frac{2 \pi \rmi}{N}\right)\right) \left(-1+\alphabar \exp\left(\rmi\psi+
%			\frac{2 \pi \rmi}{N}\right)\right)} \cr
%		\frac{\rmi}{-\alpha +\exp\left(\rmi\psi+\frac{4
%				\pi \rmi}{N}\right)} & \frac{\rmi
%			\exp\left(\rmi\psi+\frac{4 \pi\rmi}{N}\right)}{-1+\alphabar \exp\left(\rmi\psi+\frac{4 \pi \rmi}{N}\right)}
%		& \frac{\left(\abs{\alpha}^2 -1\right) \exp\left(\rmi\psi+\frac{4 \pi \rmi}{N}\right)}{\left(-\alpha +\exp\left(\rmi\psi+\frac{4 \pi \rmi}{N}\right)\right) \left(-1+\alphabar  \exp\left(\rmi\psi+\frac{4 \pi\rmi}{N}\right)\right)}
%	},
%\end{eqnarray*}
	where we set $\zeta_N=\exp(2\pi\rmi/N)$ and has the determinant
	\begin{equation*}
		\det \hat{A} = \frac{X}
		{Y_1Y_2}
	\end{equation*}
	where
	\begin{eqnarray*}
		X = -(1-\zeta_N)^3 (1+\zeta_N)(1-\abs{\alpha}^2)
		\rme^{3\rmi\psi}\zeta_N \\
		Y_1 = (\alpha -\rme^{\rmi \psi })(\alpha-\rme^{\rmi\psi}\zeta_N)(\alpha-\rme^{\rmi\psi}\zeta_N^2) \\
		Y_2 = (1-\alphabar  \rme^{\rmi \psi })	
		(1-\alphabar \rme^{\rmi\psi}\zeta_N)
		(1-\alphabar \rme^{\rmi\psi}\zeta_N^2).
	\end{eqnarray*}
%	\begin{eqnarray*}
%		X = -\left(1-\rme^{\frac{2 \pi \rmi}{N}}\right)^3 \left(1+\rme^{\frac{2\pi \rmi}{N}}\right) \left(1-\abs{\alpha}^2\right)
%		\rme^{3\rmi\psi}\rme^{\frac{2 \pi \rmi}{N}} \\
%		Y_1 = \left(\alpha -\rme^{\rmi \psi }\right)\left(\alpha-\rme^{\rmi\psi}\rme^{\frac{2 \pi \rmi}{N}}\right)\left(\alpha-\rme^{\rmi\psi}\rme^{\frac{4 \pi \rmi}{N}}\right) \\
%		Y_2 = \left(1-\alphabar  \rme^{\rmi \psi }\right)	
%		\left(1-\alphabar \rme^{\rmi\psi}\rme^{\frac{2 \pi \rmi}{N}}\right)
%		\left(1-\alphabar \rme^{\rmi\psi}\rme^{\frac{4 \pi \rmi}{N}}\right).
%	\end{eqnarray*}
	This determinant is nonvanishing: By the triangle inequality, we have
	\begin{eqnarray*}
		\abs{Y_1} \leq (1+\abs{\alpha})^3 \qquad\textrm{and}\qquad
		\abs{Y_2} \leq (1+\abs{\alpha})^3
	\end{eqnarray*}
	while for $N\geq3$ and with $\abs{1\pm\rme^{\rmi\theta}}=\sqrt{2}\sqrt{1\pm\cos\theta}$ and $1-\cos\theta\geq\theta^2/8$ for all $\theta\in\left[-\frac{2\pi}{3},\frac{2\pi}{3}\right]$, we have
	\begin{eqnarray*}
		\abs{X} = \abs{1-\rme^{\frac{2\pi \rmi}{N}}}^3\abs{1+\rme^{\frac{2\pi \rmi}{N}}}\left(1-\abs{\alpha}^2\right)
		\geq 
		\left(\frac{\pi}{N}\right)^3\left(1-\abs{\alpha}^2\right)
	\end{eqnarray*}
	where in the third line we also used $1+\cos\theta\geq1/2$ for all $\theta\in\left[-\frac{2\pi}{3},\frac{2\pi}{3}\right]$. This implies
	\begin{equation*}
		\abs{\det\hat{A}} \geq \left(\frac{\pi}{N}\right)^3\frac{1-\abs{\alpha}^2}{(1+\abs{\alpha})^6}>0
	\end{equation*}%
	so that $\hat{A}$ has full rank 3 and thus $\rank A=3$. Hence, $\DD_{\alpha}\m$, $\DD_{\alphabar}\m$, and $\DD_{\psi}\m$ are linearly independent. From the inverse function theorem for smooth functions, since $\m$ is smooth, so is its inverse \cite{Rudin1964}. This finishes the proof.
\end{proof}

Lemma~\ref{lm:WSChart} implies that we can express the state of \emph{any} system of identical phase variables $\vec{\phi}(t)$ in terms of $\alpha(t)$, $\psi(t)$, and $\vec{\ccr}(t)$ to which we refer to in what follows as \emph{WS-variables}. The equations of motion can be expressed in WS-variables via $\m^{-1}$. From now on we write the Watanabe-Strogatz equations \eref{eq:WSGeneralEqs} in their full form
\begin{numparts}\label{eq:WSFull}
\begin{eqnarray}
	\dot{\alpha} &= \rmi\left(f(Z)\alpha^2+g(Z)\alpha+\fbar(Z)\right) \label{eq:alphaDot}\\
	\dot{\psi} &= f(Z)\alpha+ g(Z) + \fbar(Z)\alphabar \label{eq:psiDot}\\
	\dot{\vec{\ccr}} &= \vec{0}. \label{eq:lambdaDot}
\end{eqnarray}
\end{numparts}
Next, we discuss some properties of the level set $\level{\vec{\ccrt}}$ which will be useful later.

\subsection{The level set of uniform distributions}
On the level set $\level{\vec{\ccrt}}$, we can close \eref{eq:WSFull} by truncating higher-order terms in $N$ and solve the resulting equations. Under the right conditions, this yields a family of periodic orbits whose union forms a NAIM. Interpreting the truncated terms as $C^1$-small perturbations, this NAIM also exists for the original model. We start this section with a discussion of the Kuramoto order parameter \cite{Kuramoto_1975}
%\blue{We now focus on the level set $\level{\vec{\ccrt}}$. On this set, we can ``close'' the equations by truncating higher-order terms in $N$ from \eref{eq:WSFull} so that the resulting equations can be solved. Under the right conditions, this yields a family of periodic orbits whose union forms a NAIM for which the truncated terms can be interpreted as a $C^1$-small perturbation so that the NAIM also exists for the original model. We start this section with a discussion of the Kuramoto order parameter which is given by
\begin{equation*}
	Z \defined \frac{1}{N}\sum_{j=1}^N \rme^{\rmi \phi_j} = \frac{1}{N} \sum_{j=1}^N \M{\alpha}{\psi}\left(\rme^{\rmi \theta_j}\right).
\end{equation*}

The key observation for systems of type \eref{eq:GeneralSystem} is that on the level set $\level{\vec{\ccrt}}$ the order parameter $Z$ can be approximated by $\alpha$ if $N$ is sufficiently large \cite{Pikovsky_2008,Eldering_2021}. Since $f$ and $g$ in \eref{eq:GeneralSystem} only depend on $Z$, replacing $Z$ by $\alpha$ decouples \eref{eq:alphaDot} from $\psi$ which simplifies the problem of finding periodic solution of \eref{eq:WSFull}. To use this fact within the framework of the theory of normally attracting invariant manifolds, we need to show that the resulting perturbation is small in $C^1$-norm. The main result of this section is an estimate of $Z$ and its partial derivatives for large $N$, which is used to estimate the error from truncating higher order terms in $N$ in \eref{eq:WSFull}. We start by writing $Z$ in terms of $\alpha$, $\psi$, and $\vec{\ccr}$. We first define the auxiliary function $Z_{\vec{\theta}}:\D\times\S\times\T^N\to\D$ with
\begin{eqnarray*}
	Z_{\vec{\theta}}(\alpha,\psi,\vec{\theta}) &\defined \frac{1}{N} \sum_{j=1}^N \frac{\alpha+\rme^{\rmi\psi} \rme^{\rmi\theta_j}} {1+\alphabar\,\rme^{\rmi\psi}\rme^{\rmi\theta_j}} \\
	&= \frac{1}{N} \sum_{j=1}^N (\alpha+\rme^{\rmi\psi}\rme^{\rmi\theta_j}) \sum_{k=0}^{\infty} (-\alphabar\,\rme^{\rmi\psi}\rme^{\rmi\theta_j})^k \\
	&= \sum_{k=0}^{\infty} \alpha (-\alphabar\,\rme^{\rmi\psi})^k\frac{1}{N}\sum_{j=1}^N \rme^{\rmi k\theta_j} + \rme^{\rmi\psi} (-\alphabar\,\rme^{\rmi\psi} )^k \frac{1}{N}\sum_{j=1}^N \rme^{\rmi(k+1)\theta_j}.
\end{eqnarray*}
Introducing the symbols
\begin{equation*}%\label{eq:wSymbols}
	\mean{\rme^{\rmi  k\vec{\theta}}} \defined \frac{1}{N}\sum_{j=1}^N \rme^{\rmi  k\theta_j},
\end{equation*}
we arrive at the general expression
\begin{eqnarray*}%\label{eq:MoebiusMeanFieldtheta}
	Z_{\vec{\theta}}(\alpha,\psi,\vec{\theta}) &= \alpha\sum_{k=0}^{\infty}(-\alphabar\,\rme^{\rmi\psi})^k\mean{\rme^{\rmi k\vec{\theta}}} + \rme^{\rmi\psi} \sum_{k=0}^{\infty} (-\alphabar\,\rme^{\rmi\psi} )^k\mean{\rme^{\rmi(k+1)\vec{\theta}}}. \\
	&= \alpha+(1-\abs{\alpha}^2)\,\rme^{\rmi\psi}\sum_{k=1}^{\infty}(-\alphabar\,\rme^{\rmi\psi})^{k-1}\mean{\rme^{\rmi k\vec{\theta}}}.
\end{eqnarray*}
From this, we can define $Z:\D\times\S\times \OCr\to\D$ by setting $\vec{\theta}=\vec{\Theta}(\vec{\ccr})$ in the equation above so that we arrive at
\begin{equation}\label{eq:MoebiusMeanField}
\eqalign{
	Z(\alpha,\psi,\vec{\ccr}) &\defined Z_{\vec{\theta}}(\alpha,\psi,\vec{\Theta}(\vec{\lambda})) \\ &=  \alpha+(1-\abs{\alpha}^2)\,\rme^{\rmi\psi}\sum_{k=1}^{\infty}(-\alphabar\,\rme^{\rmi\psi})^{k-1} \mean{\rme^{\rmi k\vec{\Theta}(\vec{\ccr})}}.}
\end{equation}
For $\vec{\ccr}=\vec{\ccrt}$, the symbol $\mean{\rme^{\rmi k\vec{\Theta}(\vec{\ccr})}}$ and $Z$ can be written in simple form as shown next.
\begin{lemma}\label{lm:AveragedRootsOfUnity}
	Recall that $\vec{\thetat}=\vec{\Theta}(\vec{\ccrt})$ is defined by $\thetat_j = -\pi+2\pi(j-1)/N$ with $j=1,\dots,N$. For $\vec{\thetat}$, we have
	\begin{equation*}
		\mean{\rme^{\rmi k\vec{\thetat}}} = 
		\cases{
		(-1)^k & if $k\in N\mathbb{Z}$ \cr
		0 & else.
		}
	\end{equation*}
\end{lemma}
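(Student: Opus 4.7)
The plan is to reduce the claim to the standard identity that the sum of all $N$-th roots of unity equals zero unless the exponent is a multiple of $N$, in which case each term is $1$ and the sum is $N$.

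First I would substitute the explicit form $\thetat_j = -\pi + \frac{2\pi}{N}(j-1)$ into the definition, obtaining
\begin{equation*}
\mean{\rme^{\rmi k\vec{\thetat}}} = \frac{1}{N}\sum_{j=1}^N \rme^{\rmi k\left(-\pi + \frac{2\pi}{N}(j-1)\right)} = \frac{(-1)^k}{N}\sum_{j=1}^N \zeta_N^{k(j-1)},
\end{equation*}
where I have pulled out the $j$-independent factor $\rme^{-\rmi k\pi} = (-1)^k$ and written $\zeta_N = \rme^{2\pi\rmi/N}$ as in the main text.

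Next I would split into cases according to whether $\zeta_N^k = 1$. If $k \in N\mathbb{Z}$, then $\zeta_N^{k(j-1)} = 1$ for every $j$, so the inner sum equals $N$ and the whole expression collapses to $(-1)^k$. Otherwise $\zeta_N^k \neq 1$ and the inner sum is a finite geometric series,
\begin{equation*}
\sum_{j=1}^N \zeta_N^{k(j-1)} = \sum_{m=0}^{N-1} \left(\zeta_N^k\right)^m = \frac{\zeta_N^{kN} - 1}{\zeta_N^k - 1} = \frac{1-1}{\zeta_N^k - 1} = 0,
\end{equation*}
since $\zeta_N^{kN} = \rme^{2\pi\rmi k} = 1$ for any integer $k$. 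Combining the two cases gives exactly the asserted formula.

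There is really no obstacle here: the statement is just the orthogonality of characters of $\mathbb{Z}/N\mathbb{Z}$, dressed up with the uniform shift by $-\pi$ that produces the alternating sign $(-1)^k$. The only point worth noting explicitly is that $k$ is taken to be an integer (which is guaranteed by the context of the Fourier expansion of $Z$ in \eref{eq:MoebiusMeanField}), so that $\zeta_N^{kN} = 1$ holds unconditionally.
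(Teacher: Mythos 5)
Your proof is correct and follows essentially the same route as the paper: factor out the $j$-independent phase $\rme^{-\rmi\pi k}=(-1)^k$ and then evaluate the remaining sum of powers of $\zeta_N^k$ as a geometric series, which vanishes unless $\zeta_N^k=1$, i.e.\ unless $k\in N\mathbb{Z}$. No gaps.
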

\begin{proof}
	Let $k\in N\mathbb{Z}$, \ie, $k=Nl$ for some integer $l$. Then
	\begin{equation*}
		\rme^{\rmi k\thetat_j}= \rme^{-\rmi\pi k+\frac{2\pi \rmi}{N} N l(j-1)} = \rme^{-\rmi\pi k}=(-1)^k
	\end{equation*}
	for all $j$ and $\mean{\rme^{\rmi k\vec{\thetat}}} = \frac{1}{N}\sum_{j=1}^N (-1)^k = (-1)^k$. On the other hand, let $k\notin N\mathbb{Z}$. We use the well known identity $\sum_{j=1}^{N} z^{j-1} = \frac{z^N-1}{z-1}$ for any $z\neq1$ which yields
	\begin{eqnarray*}
		\mean{\rme^{\rmi k\vec{\thetat}}} = \frac{1}{N} \sum_{j=1}^N \rme^{-\rmi\pi k+\frac{2\pi \rmi}{N}(j-1)k}=0
	\end{eqnarray*}
	since $\rme^{\frac{2\pi \rmi}{N}k}\neq1$ for $k\notin N\mathbb{Z}$.
\end{proof}

Each level set $\level{\vec{\ccr}}\subset\OT$ is diffeomorphic to the space $\D\times\S$ so that we identify $\level{\vec{\ccr}}$ with $\D\times\S\times\{\vec{\ccr}\}$:
\begin{equation*}
\level{\vec{\ccr}}\cong\D\times\S\times\{\vec{\ccr}\}.
\end{equation*}
Consider a smooth function
\begin{eqnarray*}
	\vec{F}:\D\times\S\times\OCr&\to\mathbb{R}^n.
\end{eqnarray*}
The derivative of its restriction $\vec{F}|_{\level{\vec{\ccr}}}$ to the level set $\level{\vec{\ccr}}$ is given by
\begin{equation*}
	\tilde{\DD}\vec{F}\defined\DD\vec{F}|_{\level{\vec{\ccr}}} \equiv \left(\DD_{\alpha}\vec{F},\DD_{\alphabar}\vec{F},\DD_{\psi}\vec{F}\right).
\end{equation*}
Let
\begin{equation*}
	\D_r\defined\set{\alpha\in\mathbb{C}}{|\alpha|<r}.
\end{equation*}
The following lemma gives an estimate to leading order in $N$ for $Z(\alpha,\psi,\vec{\ccr})-\alpha$ on the level set $\level{\vec{\ccrt}}$ of uniform distributions.

\begin{lemma}\label{lm:MeanFieldApproximationtheta}
	Consider $\overline{\D}_r$ for any $0<r<1$ and let
	\begin{eqnarray*}
		\eta:(\alpha,\psi,\vec{\ccr})&\mapsto Z(\alpha,\psi,\vec{\ccr})-\alpha\qquad\textrm{on}\qquad \D\times\S\times\OCr&\to\mathbb{C}
	\end{eqnarray*}
	Then,
	\begin{eqnarray}
		&\sup_{(\alpha,\psi)\in\overline{\D}_r\times\S} \abs{\eta(\alpha,\psi,\vec{\ccrt})} = \mathcal{O}(r^{N-1}) \label{eq:etaAbsBound}\\
		&\sup_{(\alpha,\psi)\in\overline{\D}_r\times\S} \norm{\tilde{\DD}\eta(\alpha,\psi,\vec{\ccrt})} = \mathcal{O}(Nr^{N-2}). \label{eq:etaDifBound}
	\end{eqnarray}
\end{lemma}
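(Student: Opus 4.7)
The strategy is to exploit the fact that at $\vec{\ccr}=\vec{\ccrt}$ the series representation \eref{eq:MoebiusMeanField} of $Z$ collapses to a closed-form geometric-type expression, from which both bounds follow by elementary estimates.

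\textbf{Step 1 (closed form for $\eta$).} Starting from
\begin{equation*}
\eta(\alpha,\psi,\vec{\ccrt}) = (1-\abs{\alpha}^2)\,\rme^{\rmi\psi}\sum_{k=1}^{\infty}(-\alphabar\,\rme^{\rmi\psi})^{k-1}\mean{\rme^{\rmi k\vec{\thetat}}},
\end{equation*}
I apply lemma~\ref{lm:AveragedRootsOfUnity}: only indices $k=Nl$ with $l\geq 1$ contribute, and on those indices $\mean{\rme^{\rmi k\vec{\thetat}}}=(-1)^{Nl}$. Combining this with $(-\alphabar\rme^{\rmi\psi})^{Nl-1}=(-1)^{Nl-1}(\alphabar\rme^{\rmi\psi})^{Nl-1}$, the sign factors multiply to $-1$, so that after extracting one factor of $(\alphabar\rme^{\rmi\psi})^{-1}$ and summing a geometric series in $u:=\alphabar\rme^{\rmi\psi}$ (valid since $\abs{u}\leq r<1$) I obtain the closed form
\begin{equation*}
\eta(\alpha,\psi,\vec{\ccrt}) = -\,(1-\abs{\alpha}^2)\,\frac{\alphabar^{N-1}\rme^{\rmi N\psi}}{1-\alphabar^N\rme^{\rmi N\psi}}.
\end{equation*}

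\textbf{Step 2 (the $C^0$ bound).} On $\overline{\D}_r\times\S$ the denominator satisfies $\abs{1-\alphabar^N\rme^{\rmi N\psi}}\geq 1-r^N>0$, and $1-\abs{\alpha}^2\leq 1$, so
\begin{equation*}
\abs{\eta(\alpha,\psi,\vec{\ccrt})} \;\leq\; \frac{r^{N-1}}{1-r^N} \;=\; \mathcal{O}(r^{N-1}),
\end{equation*}
which gives \eref{eq:etaAbsBound}.

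\textbf{Step 3 (the $C^1$ bound).} I differentiate the closed form using Wirtinger calculus, noting that the quotient factor depends only on $\alphabar$ and $\psi$. The derivative $\partial_\alpha\eta$ comes only from differentiating $1-\alpha\alphabar$, yielding a term of order $r^N/(1-r^N)$. For $\partial_{\alphabar}\eta$ the quotient rule produces a leading contribution $(N-1)\alphabar^{N-2}/(1-\alphabar^N\rme^{\rmi N\psi})^2$, bounded by $\mathcal{O}(Nr^{N-2})$. For $\partial_\psi\eta$ the $\rme^{\rmi N\psi}$ factors bring down a factor $\rmi N$, producing $\partial_\psi\eta=-(1-\abs{\alpha}^2)\,\rmi N\alphabar^{N-1}\rme^{\rmi N\psi}/(1-\alphabar^N\rme^{\rmi N\psi})^2$, again bounded by $\mathcal{O}(Nr^{N-1})$. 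Since each of the three partial derivatives is $\mathcal{O}(Nr^{N-2})$ uniformly in $(\alpha,\psi)\in\overline{\D}_r\times\S$, the norm of the triple $\tilde{\DD}\eta=(\DD_\alpha\eta,\DD_{\alphabar}\eta,\DD_\psi\eta)$ is $\mathcal{O}(Nr^{N-2})$, which is \eref{eq:etaDifBound}.

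The only mildly delicate point is step 1: keeping track of the sign factors arising from $(-1)^{Nl-1}(-1)^{Nl}=-1$ and of the index shift needed to recognize the tail $\sum_{l\geq 1}u^{Nl-1}$ as $u^{N-1}/(1-u^N)$. Once the closed form is written down, steps 2 and 3 are routine bounds that rely only on $r<1$ and the triangle inequality, so the main work is algebraic rather than analytic.
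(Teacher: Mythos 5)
Your proof is correct, and it differs from the paper's in a small but genuinely useful way. The paper stops at the series representation
\[
\eta(\alpha,\psi,\vec{\ccrt}) = -\big(1-\abs{\alpha}^2\big)\,\rme^{\rmi\psi}\sum_{k=1}^{\infty}(\alphabar\,\rme^{\rmi\psi})^{kN-1},
\]
then splits this into two sums, differentiates term-by-term with respect to each of $\alpha$, $\alphabar$, $\psi$, bounds each resulting series by a real power series in $r$, and finally combines the three column bounds via $\norm{A}\leq\sqrt{3}\,\norm{A}_1$. You instead sum the geometric series at the outset, obtaining the closed form
\[
\eta(\alpha,\psi,\vec{\ccrt}) = -\big(1-\abs{\alpha}^2\big)\,\frac{\alphabar^{N-1}\rme^{\rmi N\psi}}{1-\alphabar^{N}\rme^{\rmi N\psi}},
\]
and then extract both the $C^0$ and $C^1$ bounds by the quotient rule and the elementary denominator bound $\abs{1-\alphabar^N\rme^{\rmi N\psi}}\geq 1-r^N$. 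Your sign bookkeeping $(-1)^{Nl-1}(-1)^{Nl}=-1$ and the reindexing $\sum_{l\geq1}u^{Nl-1}=u^{N-1}/(1-u^N)$ are exactly right, and the closed form reproduces the paper's series after re-expansion. What the closed form buys you is a shorter and somewhat more transparent verification of \eref{eq:etaDifBound}: instead of tracking three separate infinite series and their dominant terms, you differentiate one rational function, and the factor of $N$ in the bound visibly comes from the single derivative of $\alphabar^{N-1}$ (or of $\rme^{\rmi N\psi}$). What the paper's route buys is a template that would still work at a general $\vec{\ccr}$ where the moments $\mean{\rme^{\rmi k\vec{\Theta}(\vec{\ccr})}}$ do not vanish and no closed form is available; but since the lemma is stated only at $\vec{\ccrt}$, your simplification loses nothing here. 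One minor point worth spelling out if you write this up: when passing from the three column bounds to a bound on $\norm{\tilde{\DD}\eta}$, say explicitly (as the paper does) that you are using an equivalence of matrix norms, so the reader sees why the constant absorbs the factor of $\sqrt{3}$ rather than the bound silently getting worse.
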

\begin{proof}
	We start with \eref{eq:etaAbsBound}. From \eref{eq:MoebiusMeanField}, $\vec{\thetat}=\vec{\Theta}(\vec{\ccrt})$, and lemma~\ref{lm:AveragedRootsOfUnity}, we infer
	\begin{eqnarray*}
		\eta(\alpha,\psi,\vec{\ccrt}) &= -\left(1-\abs{\alpha}^2\right)\,\rme^{\rmi\psi}\sum_{k=1}^{\infty}(\alphabar\,\rme^{\rmi\psi})^{kN-1} \\
		&= -\sum_{k=1}^{\infty} \alphabar^{kN-1} \rme^{\rmi kN\psi} + \alpha\sum_{k=1}^{\infty} \alphabar^{kN} \rme^{\rmi kN\psi}
	\end{eqnarray*}
	yielding
	\begin{eqnarray*}
		\sup_{(\alpha,\psi)\in\overline{\D}_r\times\S} \abs{\eta(\alpha,\psi,\vec{\ccrt})} 
		&=\sup_{(\alpha,\psi)\in\overline{\D}_r\times\S} \abs{\left(1-\abs{\alpha}^2\right)\,\rme^{\rmi\psi} \sum_{k=1}^{\infty} (\alphabar\,\rme^{\rmi\psi})^{kN-1}}\\
		&\leq \sup_{(\alpha,\psi)\in\overline{\D}_r\times\S} \left(1-\abs{\alpha}^2\right) \sum_{k=1}^{\infty} \abs{\alphabar\,\rme^{\rmi\psi}}^{kN-1} \\
		&\leq\sum_{k=1}^{\infty} r^{kN-1} = \mathcal{O}(r^{N-1}).
	\end{eqnarray*}
	so that \eref{eq:etaAbsBound} holds.
	
	To prove \eref{eq:etaDifBound}, we first compute the entries of $\tilde{\DD}\eta(\alpha,\psi,\vec{\ccrt})$ to leading order in $N$. From this, we can expand $\norm{\tilde{\DD}\eta(\alpha,\psi,\vec{\ccrt})}$ to leading order. For each $x\in\{ \alpha,\alphabar,\psi\}$, we write in a slight abuse of notation $\DD_x\eta = \Re \DD_x\eta+\rmi\,\Im\DD_x\eta$ for the respective column $(\Re \DD_x\eta,\Im\DD_x\eta)^T$ of $\DD\eta$.
	
	For the first column of $\DD\eta(\alpha,\psi,\vec{\ccrt})$, we find
	\begin{eqnarray*}
		\DD_{\alpha}\eta(\alpha,\psi,\vec{\ccrt}) &=
		\sum_{k=1}^{\infty} (\alphabar\,\rme^{\rmi\psi})^{kN} \\
		\abs{\DD_{\alpha}\eta(\alpha,\psi,\vec{\ccrt})}&\leq \sum_{k=1}^{\infty} \abs{\alpha}^{kN} \leq \sum_{k=1}^{\infty} r^{kN}
	\end{eqnarray*}
	for all $(\alpha,\psi)\in\overline{\D}_r\times\S$ and thus
	\begin{eqnarray*}
		\sup_{(\alpha,\psi)\in\overline{\D}_r\times\S}\abs{\Re\DD_{\alpha}\eta(\alpha,\psi,\vec{\ccrt})} &= \mathcal{O}(r^N) \\ \sup_{(\alpha,\psi)\in\overline{\D}_r\times\S}\abs{\Im\DD_{\alpha}\eta(\alpha,\psi,\vec{\ccrt})} &= \mathcal{O}(r^N).
	\end{eqnarray*}
	For the second column, we find
	\begin{eqnarray*}
		\DD_{\alphabar}\eta(\alpha,\psi,\vec{\ccrt}) &= 
		-\sum_{k=1}^{\infty} (kN-1)\,\alphabar^{kN-2}\,\rme^{\rmi kN\psi}
		+ \alpha\sum_{k=1}^{\infty} kN\,\alphabar^{kN-1}\,\rme^{\rmi kN\psi} \\
		\abs{\DD_{\alphabar}\eta(\alpha,\psi,\vec{\ccrt})}&\leq\; \sum_{k=1}^{\infty} \left[(kN-1)\abs{\alpha}^{kN-2} + kN\abs{\alpha}^{kN}\right] \\
		&\leq\; \sum_{k=1}^{\infty} \left[(kN-1)\,r^{kN-2}+kN\,r^{kN}\right]
	\end{eqnarray*}
	for all $(\alpha,\psi)\in\overline{\D}_r\times\S$ and
	\begin{eqnarray*}
		\sup_{(\alpha,\psi)\in\overline{\D}_r\times\S}\abs{\Re\DD_{\alphabar}\eta(\alpha,\psi,\vec{\ccrt})} &= \mathcal{O}(Nr^{N-2}) \\ \sup_{(\alpha,\psi)\in\overline{\D}_r\times\S}\abs{\Im\DD_{\alphabar}\eta(\alpha,\psi,\vec{\ccrt})} &= \mathcal{O}(Nr^{N-2}).
	\end{eqnarray*}
	Finally, we have
	\begin{eqnarray*}
		\DD_{\psi}\eta(\alpha,\psi,\vec{\ccrt})
		&= \sum_{k=1}^{\infty} \rmi kN\big[-\alphabar^{kN-1}+\alpha\alphabar^{kN}\big]\rme^{\rmi kN\psi} \\
		\abs{\DD_{\psi}\eta(\alpha,\psi,\vec{\ccrt})}
		&\leq \sum_{k=1}^{\infty} kN\big[ \abs{\alpha}^{kN-1} + \abs{\alpha}^{kN+1} \big] \\
		&\leq\sum_{k=1}^{\infty} kN\big[ r^{kN-1}+r^{kN+1}\big]
	\end{eqnarray*}
	for all $(\alpha,\psi)\in\overline{\D}_r\times\S$ so that
	\begin{eqnarray*}
		\sup_{(\alpha,\psi)\in\overline{\D}_r\times\S}\abs{\Re\DD_{\psi}\eta(\alpha,\psi,\vec{\ccrt})} &= \mathcal{O}(Nr^{N-1}) \\ \sup_{(\alpha,\psi)\in\overline{\D}_r\times\S}\abs{\Im\DD_{\psi}\eta(\alpha,\psi,\vec{\ccrt})} &= \mathcal{O}(Nr^{N-1}).
	\end{eqnarray*}
	Writing 
	\begin{equation*}
		\norm{A}_1 = \max_{1\leq j\leq n}\sum_{i=1}^m\abs{A_{ij}}
	\end{equation*}
	for the induced 1-norm for any $m\times n$ matrix $A$ and using the inequality $\norm{A}\leq\sqrt{n}\norm{A}_1$ for its Euclidean norm, we arrive at	
	\begin{eqnarray*}
		\norm{\tilde{\DD}\eta(\alpha,\psi,\vec{\ccrt})} \leq \sqrt{3} \max_{x\in\{\alpha,\alphabar,\psi\}}\left( \abs{\Re\tilde{\DD}_x\eta(\alpha,\psi,\vec{\ccrt})} + \abs{\Im\tilde{\DD}_x\eta(\alpha,\psi,\vec{\ccrt})}\right)
	\end{eqnarray*}
	and hence
	\begin{equation*}
	\norm{\tilde{\DD}\eta(\alpha,\psi,\vec{\ccrt})} = \mathcal{O}(Nr^{N-2}),
	\end{equation*}
	uniformly for all $(\alpha,\psi)\in\overline{\D}_r\times\S$ which proves \eref{eq:etaDifBound}.
\end{proof}

We are now ready to give the main result of this subsection which will be used in the proof of theorem~\ref{thm:ManifoldContinuum}.

\begin{lemma}\label{lm:VectorBounds}
	Let $0<r<1$ and consider the vector field $\vec{\tau}:\D\times\S\times\OCr\to\mathbb{R}^3$, defined by
	\begin{equation}\label{eq:VectorFieldOnLevelSet}
		\vec{\tau}(\alpha,\psi,\vec{\ccr}) = \left(G(Z(\alpha,\psi,\vec{\ccr}))-G(\alpha)\right)\cdot \vec{Y}(\alpha,\psi)
	\end{equation}
	where $G:\D\to\mathbb{C}$ and $\vec{Y}:\D\times\S\to\mathbb{R}^3$ are smooth. Then, given $\varepsilon>0$, there exists an $N_0\in\mathbb{N}$ such that for all $N\geq N_0$, there exists a $\delta$-neighborhood $\OCr_{\delta}(\vec{\ccrt})$ of $\vec{\ccrt}$ with
	\begin{eqnarray*}
		\sup_{(\alpha,\psi)\in\overline{\D}_r\times\S}\norm{\vec{\tau}(\alpha,\psi,\vec{\ccr})} +\norm{\tilde{\DD}\vec{\tau}(\alpha,\psi,\vec{\ccr})} &<\varepsilon
	\end{eqnarray*}
	for all $\vec{\ccr}\in\OCr_{\delta}(\vec{\ccrt})$.
\end{lemma}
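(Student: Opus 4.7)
The plan is to first establish the bound at the single point $\vec{\ccr}=\vec{\ccrt}$ by reducing everything to $\eta(\alpha,\psi,\vec{\ccrt})=Z-\alpha$ and its tangential derivatives, which are controlled by lemma~\ref{lm:MeanFieldApproximationtheta}, and then to use smoothness of $\vec{\tau}$ in the parameter $\vec{\ccr}$ together with compactness of $\overline{\D}_r\times\S$ to extend the estimate to a small neighborhood of $\vec{\ccrt}$.

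\textbf{Step 1 (estimate at $\vec{\ccrt}$).} Writing $Z=\alpha+\eta$ and applying the mean-value identity to the smooth map $G:\D\to\mathbb{C}$, I would bound
\begin{equation*}
\abs{G(Z)-G(\alpha)}\;\leq\;\sup_{\zeta\in\overline{\D}_r}\norm{\DD G(\zeta)}\cdot\abs{\eta(\alpha,\psi,\vec{\ccrt})}.
\end{equation*}
For the tangential derivatives, using $\tilde{\DD}Z=\tilde{\DD}\alpha+\tilde{\DD}\eta$ (where $\tilde{\DD}\alpha$ is a constant matrix), I would expand
\begin{equation*}
\tilde{\DD}\bigl(G(Z)-G(\alpha)\bigr)=\DD G(Z)\,\tilde{\DD}\eta+\bigl[\DD G(Z)-\DD G(\alpha)\bigr]\,\tilde{\DD}\alpha,
\end{equation*}
bounding the first term by $\sup\norm{\DD G}\cdot\norm{\tilde{\DD}\eta}$ and the second by $\sup\norm{\DD^2 G}\cdot\abs{\eta}\cdot\norm{\tilde{\DD}\alpha}$. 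Combined with the product rule for $\vec{\tau}=(G(Z)-G(\alpha))\vec{Y}$ and the fact that $\vec{Y}$ and $\tilde{\DD}\vec{Y}$ are uniformly bounded on the compact set $\overline{\D}_r\times\S$, lemma~\ref{lm:MeanFieldApproximationtheta} yields
\begin{equation*}
\sup_{(\alpha,\psi)\in\overline{\D}_r\times\S}\bigl(\norm{\vec{\tau}(\alpha,\psi,\vec{\ccrt})}+\norm{\tilde{\DD}\vec{\tau}(\alpha,\psi,\vec{\ccrt})}\bigr)=\mathcal{O}\bigl(N r^{N-2}\bigr).
\end{equation*}
Since $r<1$, this quantity tends to $0$ as $N\to\infty$, so for a given $\varepsilon>0$ I can choose $N_0\in\mathbb{N}$ such that for every $N\geq N_0$ this supremum is smaller than $\varepsilon/2$.

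\textbf{Step 2 (extension to a $\delta$-neighborhood).} Having fixed such an $N\geq N_0$, the map $(\alpha,\psi,\vec{\ccr})\mapsto\vec{\tau}(\alpha,\psi,\vec{\ccr})$ is smooth on $\D\times\S\times\OCr$ because $Z(\alpha,\psi,\vec{\ccr})$ depends smoothly on $\vec{\ccr}$ through $\vec{\Theta}$ (cf.\ \eref{eq:MoebiusMeanField}), and because $\vec{\ccrt}$ lies in the interior of $\OCr$ (as one sees directly from \eref{eq:DefLambdaTilde}). Fix any closed ball $\overline{\OCr_{\delta_0}(\vec{\ccrt})}\subset\OCr$; then $\vec{\tau}$ and $\tilde{\DD}\vec{\tau}$ are continuous on the compact set $\overline{\D}_r\times\S\times\overline{\OCr_{\delta_0}(\vec{\ccrt})}$ and hence uniformly continuous in $\vec{\ccr}$. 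Consequently there exists $\delta\in(0,\delta_0]$ such that
\begin{equation*}
\sup_{(\alpha,\psi)\in\overline{\D}_r\times\S}\bigl(\norm{\vec{\tau}(\alpha,\psi,\vec{\ccr})}+\norm{\tilde{\DD}\vec{\tau}(\alpha,\psi,\vec{\ccr})}\bigr)<\varepsilon
\end{equation*}
for all $\vec{\ccr}\in\OCr_{\delta}(\vec{\ccrt})$, proving the claim.

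\textbf{Main obstacle.} The only delicate point is that $N$ varies and so does the function $\vec{\tau}$; thus the $\delta$ obtained by uniform continuity a priori depends on $N$. Fortunately the lemma only asserts existence of some $\delta$ for each fixed $N\geq N_0$, so allowing this $N$-dependence is exactly what is needed. The key quantitative input is therefore the geometric decay $\mathcal{O}(Nr^{N-2})$ supplied by lemma~\ref{lm:MeanFieldApproximationtheta}, which is precisely the feature of the uniform level set $\vec{\ccrt}$ (all nontrivial Fourier modes below the $N$-th vanish) that makes the truncation of $Z$ to $\alpha$ valid in $C^1$.
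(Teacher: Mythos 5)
Your proposal is correct and follows essentially the same route as the paper's proof: a mean-value estimate reducing $\vec{\tau}$ and $\tilde{\DD}\vec{\tau}$ at $\vec{\ccrt}$ to the bounds $\mathcal{O}(r^{N-1})$ and $\mathcal{O}(Nr^{N-2})$ on $\eta$ and $\tilde{\DD}\eta$ from lemma~\ref{lm:MeanFieldApproximationtheta}, followed by a continuity-and-compactness argument to pass from $\{\vec{\ccrt}\}$ to a $\delta$-neighborhood (the paper uses an explicit finite subcover where you invoke uniform continuity, which is equivalent). Your closing remark that $\delta$ may depend on $N$ correctly identifies the only subtlety, and it is handled the same way in the paper.
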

\begin{proof}
	\textbf{Step 1:} Let $F:\D\to\mathbb{C}$ be defined as $F(Z)\defined G(Z)-G(\alpha)$. We start by bounding the function $F$ and its partial derivatives $\DD_xF$ with $x\in\{\alpha,\alphabar,\psi\}$. Since $\overline{\D}_r$ is convex and $F(\alpha)=0$, we find by the mean value theorem and with $\eta=Z-\alpha$ that
	\begin{equation}\label{eq:MeanFZ}
	\eqalign{
		F(Z) &= \int_0^1 \eta\cdot\DD_zF|_{z=\alpha+t\eta} + \bar{\eta}\cdot\DD_{\bar{z}}F|_{z=\alpha+t\eta}\dd{t} \\
		\abs{F(Z)} &\leq \abs{\eta}\cdot\int_0^1\abs{\DD_zF}_{z=\alpha+t\eta} + \abs{\DD_{\bar{z}}F}_{z=\alpha+t\eta}\dd{t}.
	}
	\end{equation}
	Since $F$ is smooth and $\overline{\D}_r$ is compact, there then exist constants $M_1,M_2>0$ such that $\abs{\DD_z F}<M_1$ and $\abs{\DD_{\bar{z}} F}<M_2$ for all $z\in\overline{\D}_r$ so that in particular, the integral above is of order $\mathcal{O}(1)$ and
	\begin{equation*}
		\abs{F(Z(\alpha,\psi,\vec{\ccrt}))} = \mathcal{O}(\abs{\eta(\alpha,\psi,\vec{\ccrt})}) = \mathcal{O}(r^{N-1})
	\end{equation*}
	for all $(\alpha,\psi)\in\overline{\D}_r\times\S$ by lemma~\ref{lm:MeanFieldApproximationtheta}. On the other hand, applying the product and chain rule to \eref{eq:MeanFZ}, we find
	\begin{eqnarray*}
		\DD_x F(Z) =&\quad \int_0^1 \DD_z\DD_z F|_{z=\alpha+t\eta}\,\DD_x(\alpha+t\eta)\dd{t}\cdot\eta \\
		&+\int_0^1 \DD_{\bar{z}}\DD_z F|_{z=\alpha+t\eta}\,\DD_x(\alphabar+t\bar{\eta})\dd{t}\cdot\eta \\
%		&+\int_0^1 \DD_z F|_{\alpha+t\eta}\dd{t} \cdot \DD_x\eta \\
		&+\int_0^1 \DD_z \DD_{\bar{z}}F|_{z=\alpha+t\eta}\,\DD_x(\alpha+t\eta)\dd{t}\cdot\bar{\eta} \\
		&+\int_0^1 \DD_{\bar{z}}\DD_{\bar{z}}F|_{z=\alpha+t\eta}\,\DD_x(\alphabar+t\bar{\eta})\dd{t}\cdot\bar{\eta} \\
		&+\int_0^1 \DD_z F|_{\alpha+t\eta}\dd{t} \cdot \DD_x\eta+\int_0^1 \DD_{\bar{z}}F|_{z=\alpha+t\eta}\dd{t}\cdot\DD_x\bar{\eta}
%		&+\int_0^1 \DD_{\bar{z}}F|_{z=\alpha+t\eta}\dd{t}\cdot\DD_x\bar{\eta}
	\end{eqnarray*}
	so that by the same argument as above and again with lemma~\ref{lm:MeanFieldApproximationtheta}, we find with $\abs{\DD_x(\alpha+t\eta(\alpha,\psi,\vec{\ccrt}))} = \mathcal{O}(1)+\mathcal{O}(Nr^{N-2})=\mathcal{O}(1)$
	\begin{eqnarray*}
		\abs{\DD_xF(Z(\alpha,\psi,\vec{\ccrt}))} &\leq \abs{\eta}\cdot \int_0^1\Big[ \abs{\DD_z\DD_zF}\!\,_{z=\alpha+t\eta}\cdot\abs{\DD_x(\alpha+t\eta)} +\\ 
		&\hspace{1.75cm}+ \abs{\DD_{\bar{z}}\DD_z F}\!\,_{z=\alpha+t\eta}\cdot\abs{\DD_x(\alphabar+t\bar{\eta})} + \\
		&\hspace{1.75cm}+ \abs{\DD_z\DD_{\bar{z}}F}\!\,_{z=\alpha+t\eta}\cdot\abs{\DD_x(\alpha+t\eta)} +\\
		&\hspace{1.75cm}+ \abs{\DD_{\bar{z}}\DD_{\bar{z}}F}\!\,_{z=\alpha+t\eta}\cdot\abs{\DD_x(\alphabar+t\bar{\eta})}\Big]\dd{t}\;+ \\
		&\hspace{1.75cm}+\abs{\DD_x\eta}\cdot\int_0^1\abs{\DD_zF}\!\,_{z=\alpha+t\eta}+\abs{\DD_{\bar{z}}F}\!\,_{z=\alpha+t\eta}\dd{t} \\
		&= \mathcal{O}(\abs{\eta(\alpha,\psi,\vec{\ccrt})}) + \mathcal{O}(\abs{\DD_x\eta(\alpha,\psi,\vec{\ccrt})}) = \mathcal{O}(Nr^{N-2}).
	\end{eqnarray*}
	
	\textbf{Step 2:} Since $\vec{Y}(\alpha,\psi)$ does not depend on $N$, we have $\norm{\vec{Y}(\alpha,\psi)} = \mathcal{O}(1)$ and $\norm{\DD_x\vec{Y}(\alpha,\psi)}_1 = \mathcal{O}(1)$ for all $(\alpha,\psi)\in\overline{\D}_r\times\S$ so that
	\begin{equation*}
		\norm{\vec{\tau}(\alpha,\psi,\vec{\ccrt})} = \abs{F(Z(\alpha,\psi,\vec{\ccrt}))}\cdot\norm{\vec{Y}(\alpha,\psi)} = \mathcal{O}(r^{N-1})
	\end{equation*}
	and, by virtue of the triangle inequality and the previous step,
	\begin{eqnarray*}
		\fl\norm{\tilde{\DD}\vec{\tau}(\alpha,\psi,\vec{\ccrt})}_1 &= \max_{x\in\{\alpha,\alphabar,\psi\}} \norm{\DD_xF(Z(\alpha,\psi,\vec{\ccrt}))\cdot\vec{Y}(\alpha,\psi) + F(Z(\alpha,\psi,\vec{\ccrt}))\cdot\DD_x\vec{Y}(\alpha,\psi)}_1 \\
%		&\leq \max_{x\in\{\alpha,\alphabar,\psi\}}\abs{\DD_xF(Z(\alpha,\psi,\vec{\ccrt}))}\cdot\norm{\vec{Y}(\alpha,\psi)}_1 + \abs{F(Z(\alpha,\psi,\vec{\ccrt}))}\cdot\norm{\DD_x\vec{Y}(\alpha,\psi)}_1 \\
		&=\mathcal{O}(Nr^{N-2})
	\end{eqnarray*}
	holds so that
	\begin{equation*}
		\norm{\tilde{\DD}\vec{\tau}(\alpha,\psi,\vec{\ccrt})} \leq\sqrt{3}\norm{\tilde{\DD}\vec{\tau}(\alpha,\psi,\vec{\ccrt})}_1 = \mathcal{O}(Nr^{N-2})
	\end{equation*}
	for all $(\alpha,\psi)\in\overline{\D}_r\times\S$. 
	
	\textbf{Step 3:} From step 2, we infer that there exists an $N_0$ such that $\abs{\vec{\tau}(\alpha,\psi,\vec{\ccr})}<\varepsilon/2$ and $\norm{\tilde{\DD}\vec{\tau}(\alpha,\psi,\vec{\ccr})}<\varepsilon/2$, uniformly on $\overline{\D}_r\times\S\times\{\vec{\ccrt}\}$ for all $N\geq N_0$. By smoothness of $\vec{\tau}$, it follows that for each $(\alpha,\psi,\vec{\ccrt})\in \overline{\D}_r\times\S\times\{\vec{\ccrt}\}$, there exist open neighborhoods $W(\alpha,\psi)\subseteq\D\times\S$ of $(\alpha,\psi)$ and $V_{\delta'}(\vec{\ccrt})\subseteq\OCr$ of $\vec{\ccrt}$ with $\delta'=\delta'(\alpha,\psi)>0$ such that this inequality also holds on $W(\alpha,\psi)\times V_{\delta'}(\vec{\ccrt})$. Covering $\overline{\D}_r\times\S\times\{\vec{\ccrt}\}$ with these open sets, there exists a finite subcover since $\overline{\D}_r\times\S\times\{\vec{\ccrt}\}$ is compact. Setting $\delta\defined\min \delta'$ over this subcover,  we find 
	\begin{equation*}
		\abs{\vec{\tau}(\alpha,\psi,\vec{\ccr})}+ \norm{\tilde{\DD}\vec{\tau}(\alpha,\psi,\vec{\ccr})}<\varepsilon
	\end{equation*}
	for each $(\alpha,\psi,\vec{\ccr})\in \overline{\D}_r\times\S\times V_{\delta}$ which proves the assertion.
\end{proof}

\subsection{Existence of the NAIM}\label{sec:Continuum}

We are now ready to prove our first main result: the theorem on the existence of a continuum of periodic orbits, whose union forms a NAIM.

\begin{theorem}\label{thm:ManifoldContinuum} 
	Consider the system \eref{eq:WSFull}
	where $f:\D\to\mathbb{C}$ and $g:\D\to\mathbb{R}$ are smooth functions of the Kuramoto order parameter $Z$. Further, let the closed equation
	\begin{equation*}%\label{eq:TruncatedAlpha}
		\dot{\alpha} = i \left( f(\alpha)\alpha^2 + g(\alpha)\alpha+ \fbar(\alpha)\right)
	\end{equation*}
	possess a unique stable fixed point $\alpha_0\in\D$ for which $f(\alpha_0)\neq0$.
%	\begin{equation*}
%		f(\alpha_0)\neq0
%	\end{equation*}
%	holds. 

	Then, there exists a $\delta$-neighborhood $\overline{V}_{\delta}(\vec{\ccrt})$ of $\vec{\ccrt}$ such that for sufficiently large $N$ and every $\vec{\ccr}\in \overline{V}_{\delta}(\vec{\ccrt})$, there exists a unique periodic orbit $\mathcal{C}_{\vec{\ccr}}\subset\level{\vec{\ccr}}$ which is exponentially stable in $\level{\vec{\ccr}}$. Additionally, the union
	\begin{equation*}
		\contF_{\delta} \defined \bigcup_{\vec{\ccr}\in \overline{\OCr}_{\delta}}\mathcal{C}_{\vec{\ccr}} \subset \OT
	\end{equation*}
	forms a compact NAIM of dimension $N-2$ with invariant boundary.
\end{theorem}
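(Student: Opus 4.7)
The plan is to write the Watanabe-Strogatz system \eref{eq:WSFull} as a $C^1$-small perturbation of an explicitly integrable truncated system obtained by replacing the Kuramoto order parameter $Z$ by $\alpha$, to identify a natural NAIM for the truncated system, and then to transport that structure to the full system via the persistence theorem~\ref{thm:persistence}. To set up the perturbation, I would decompose the right-hand side of \eref{eq:alphaDot}--\eref{eq:psiDot} as the truncated vector field
\begin{eqnarray*}
	\dot{\alpha} &= \rmi\left(f(\alpha)\alpha^{2} + g(\alpha)\alpha + \fbar(\alpha)\right), \\
	\dot{\psi}   &= f(\alpha)\alpha + g(\alpha) + \fbar(\alpha)\alphabar, \\
	\dot{\vec{\ccr}} &= \vec{0},
\end{eqnarray*}
plus a remainder whose components all have the form $(G(Z)-G(\alpha))\cdot\vec{Y}(\alpha,\psi)$ with $G\in\{f,g,\fbar\}$ and $\vec{Y}$ smooth and independent of $N$. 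Lemma~\ref{lm:VectorBounds} then supplies, for any prescribed $\varepsilon>0$ and any $r\in(0,1)$ with $\abs{\alpha_0}<r$, a threshold $N_0$ and a neighborhood $\OCr_\delta(\vec{\ccrt})$ on which this remainder is $\varepsilon$-small in $C^1$-norm, uniformly on $\overline{\D}_r\times\S\times\OCr_\delta(\vec{\ccrt})$.

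Next I would analyse the truncated system. Its $\alpha$-equation is precisely the closed equation in the hypothesis, so $\alpha_0$ is a stable hyperbolic fixed point; from the fixed-point identity $f(\alpha_0)\alpha_0^{2}+g(\alpha_0)\alpha_0+\fbar(\alpha_0)=0$ together with $f(\alpha_0)\neq0$ and $\abs{\alpha_0}<1$ (which in particular forces $\alpha_0\neq0$), one obtains
\[
	\dot{\psi}|_{\alpha=\alpha_0} = \fbar(\alpha_0)\,\frac{\abs{\alpha_0}^{2}-1}{\alpha_0} \neq 0.
\]
Hence $\contF_0\defined\{\alpha_0\}\times\S\times\overline{\OCr}_\delta(\vec{\ccrt})$ is a compact, smooth, $(N-2)$-dimensional invariant manifold of the truncated flow, foliated by the periodic orbits $\{\alpha_0\}\times\S\times\{\vec{\ccr}\}$. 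Its tangent bundle is spanned by the $\psi$-circle and the $\vec{\ccr}$-directions (on which the flow is neutral), while its normal bundle is the two-dimensional $\alpha$-direction, which is exponentially contracting by hyperbolicity of $\alpha_0$; thus $\contF_0$ meets definition~\ref{def:NAIM} with $b=0$ and $a<0$.

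Third I would apply theorem~\ref{thm:persistence}. The boundary of $\contF_0$ is invariant (since $\dot{\vec{\ccr}}=\vec{0}$) rather than overflowing, so, as noted in the remarks after theorem~\ref{thm:persistence}, I would first modify both vector fields in a collar neighborhood of $\partial\OCr_\delta$ while leaving the interior $\OCr_{\delta/2}(\vec{\ccrt})$ untouched, rendering $\contF_0$ overflowing-invariant for the modified truncated system. Persistence then yields, for the modified full system, a $C^1$-close persistent NAIM diffeomorphic to $\contF_0$. Because $\vec{\ccr}$ is exactly conserved by the original WS-flow \eref{eq:lambdaDot}, the restriction of this persistent NAIM to $\overline{\OCr}_{\delta/2}(\vec{\ccrt})$ is automatically invariant under the unmodified dynamics; after relabelling $\delta/2\mapsto\delta$ this furnishes the desired $\contF_\delta$ with invariant boundary. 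The fibration by level sets is preserved, and each fiber $\mathcal{C}_{\vec{\ccr}}=\contF_\delta\cap\level{\vec{\ccr}}$ is a compact, flow-invariant, one-dimensional curve $C^1$-close to the unperturbed circle $\{\alpha_0\}\times\S\times\{\vec{\ccr}\}$; since $\dot{\psi}$ remains bounded away from zero on $\contF_\delta$, each $\mathcal{C}_{\vec{\ccr}}$ is necessarily a single periodic orbit, and its exponential stability in $\level{\vec{\ccr}}$ follows by restricting the normal attraction of $\contF_\delta$ to that (flow-invariant) level set.

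The main obstacle lies in the bookkeeping at the boundary: carrying out the collar modification so that the interior dynamics is genuinely untouched, and verifying that a single $N_0$ supplied by lemma~\ref{lm:VectorBounds} simultaneously controls all three remainder terms associated with $f$, $g$, and $\fbar$ on the chosen closed disk $\overline{\D}_r$. Beyond this, the argument is essentially a direct application of lemma~\ref{lm:VectorBounds} and theorem~\ref{thm:persistence} together with the short computation of $\dot{\psi}|_{\alpha=\alpha_0}$ above.
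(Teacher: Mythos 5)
Your proposal takes a genuinely different route from the paper's. You apply the NAIM persistence theorem~\ref{thm:persistence} globally to the product $\{\alpha_0\}\times\S\times\overline{\OCr}_\delta(\vec{\ccrt})$ and then read off the periodic orbits as fibers, whereas the paper applies the persistence theorem for hyperbolic periodic orbits \emph{fiber by fiber} inside each level set $\level{\vec{\ccr}}$, citing \cite{Shilnikov2001}, and only afterwards assembles the union into a manifold and checks the NAIM conditions directly in its Steps 4--5. The distinction is not cosmetic, because of which derivatives Lemma~\ref{lm:VectorBounds} actually controls.

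There is a genuine gap where you write that Lemma~\ref{lm:VectorBounds} makes the remainder ``$\varepsilon$-small in $C^1$-norm'' on $\overline{\D}_r\times\S\times\OCr_\delta(\vec{\ccrt})$. The lemma only bounds $\norm{\vec{\tau}}+\norm{\tilde{\DD}\vec{\tau}}$, and by definition $\tilde{\DD}\vec{\tau}=(\DD_\alpha\vec{\tau},\DD_{\alphabar}\vec{\tau},\DD_\psi\vec{\tau})$ excludes the $\vec{\ccr}$-derivatives. Nothing in the paper shows that $\DD_{\vec{\ccr}}\vec{\tau}$ is small; it enters through $\DD_{\vec{\ccr}}Z$, whose leading contribution comes from $\DD_{\vec{\ccr}}\mean{\rme^{\rmi\vec{\Theta}(\vec{\ccr})}}$ and is not exponentially small in $N$. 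Theorem~\ref{thm:persistence} requires $C^1$-closeness in \emph{all} directions on a neighborhood of the candidate NAIM, so it cannot be invoked with only the tangential bound the lemma supplies. The paper sidesteps this precisely by working inside a fixed $\level{\vec{\ccr}}$, where $\vec{\ccr}$ is a frozen parameter and only the $(\alpha,\alphabar,\psi)$-derivatives of the perturbation matter. To salvage your route you would need either a separate estimate on $\DD_{\vec{\ccr}}\vec{\tau}$ or a persistence theorem tailored to the exact conservation of $\vec{\ccr}$ (i.e.\ to the fact that the $\vec{\ccr}$-component of $\vec{\tau}$ vanishes identically); neither is supplied by Theorem~\ref{thm:persistence} as stated. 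A smaller gap: from ``$\dot\psi$ bounded away from zero'' you only get that each connected component of $\contF_\delta\cap\level{\vec{\ccr}}$ is a periodic orbit. Concluding there is a \emph{single} component requires the graph structure of the persistent manifold over $\S\times\overline{\OCr}_\delta$, which the paper records via the immersion $\iota$ in Step~4 and the closing observation that $\contF_\delta=\set{(\alpha(\psi,\vec{\ccr}),\psi,\vec{\ccr})}{\psi\in\S,\,\vec{\ccr}\in\overline{\OCr}_\delta}$.
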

\begin{proof}
	\textbf{Step 1:} Substituting $Z$ by $\alpha$ in \eref{eq:WSFull} yields the closed system
	\begin{numparts}\label{eq:truncatedWS}
	\begin{eqnarray}
		\dot{\alpha} &= i \left( f(\alpha)\alpha^2 + g(\alpha)\alpha+ \fbar(\alpha) \right) \label{eq:WSTruncAlpha}\\
		\dot{\psi} &= f(\alpha)\alpha+ g(\alpha) + \fbar(\alpha)\alphabar \label{eq:WSTruncPsi}\\
		\dot{\vec{\ccr}} &= \vec{0}. \label{eq:WSTruncLambda}
	\end{eqnarray}
	\end{numparts}
	By assumption, $\alpha_0$ is a stable fixed point of \eref{eq:WSTruncAlpha}. If $\Omega \defined f(\alpha_0)\alpha_0+ g(\alpha_0) + \fbar(\alpha_0)\alphabar_0\neq0$, this gives rise to the periodic solution so $(\alpha_0,\Omega\, t,\vec{\ccr})$ for \eref{eq:WSTruncAlpha}-\eref{eq:WSTruncLambda} with an exponentially stable periodic orbit $\mathcal{C}^{\t}_{\vec{\ccr}}$. From the definition of $\Omega$, we then infer
	\begin{eqnarray*}
		\alpha_0\Omega &= f(\alpha_0)\alpha_0^2 + g(\alpha_0)\alpha_0+ \fbar(\alpha_0)\abs{\alpha_0}^2.
	\end{eqnarray*}
	Adding and subtracting $\fbar(\alpha_0)$, this results in
	\begin{equation*}
		\alpha_0 \Omega = \underbrace{f(\alpha_0)\alpha_0^2+g(\alpha_0)\alpha_0+\fbar(\alpha_0)}_{=0 \textrm{ by \eref{eq:WSTruncAlpha}}} + \fbar(\alpha_0)\left(\abs{\alpha_0}^2-1\right)
	\end{equation*}
	so that $\Omega=0$ implies $f(\alpha_0)=0$ and conversely, $f(\alpha_0)\neq0$ implies $\Omega\neq0$.
	
	\textbf{Step 2:} We prove that for any $\delta>0$, the union 
	\begin{equation*}
		\contF^{\t}_{\delta}\defined\bigcup_{\vec{\ccr}\in \overline{\OCr}_{\delta}}\mathcal{C}^{\t}_{\vec{\ccr}}=\{\alpha_0\}\times\S\times\overline{\OCr}_{\delta}
	\end{equation*}
	forms a smooth compact NAIM with invariant boundary for \eref{eq:truncatedWS} in $\D\times\S\times \OCr$ by checking conditions (i)-(iii) of definition~\ref{def:NAIM}. As a product of the three smooth compact manifolds $\{\alpha_0\}$,  $\S$, and $\overline{\OCr}_{\delta}$, it is itself an $(N-2)$-dimensional smooth compact submanifold of $\D\times\S\times \OCr$. It is invariant by construction so that condition (i) is fulfilled.
	
	Let $\vec{p}=(\alpha_0,\psi,\vec{\ccr})\in\contF^{\t}_{\delta}$ denote any point on $\contF$ and consider any tangential vector $\vec{v}\in T_{\vec{p}}\OT$ at $\vec{p}$ with $\vec{v}=(v_{\alpha},v_{\psi},v_{\vec{\ccr}})^T$ where $v_{\alpha}\in \mathbb{R}^2$, $v_{\psi}\in\mathbb{R}^1$, and $v_{\vec{\ccr}}\in\mathbb{R}^{N-3}$ denote the respective tangential components with respect to the variables $\alpha$, $\psi$, and $\vec{\ccr}$. In particular, since the dynamics of $\alpha$ decouples from $\psi$ and $\vec{\ccr}$ in \eref{eq:truncatedWS}, the dynamics of $v_{\alpha}$ decouples from $v_{\psi}$ and $v_{\vec{\ccr}}$ on $\contF_{\delta}^{\t}$ so that $\mathcal{N}$ stays invariant under the linearized flow $\DD\Phi^t$. Then, $T\contF_{\delta}^{\t}$ also stays invariant and (ii) is fulfilled. The flow is not contracting in $v_{\psi}$ and $v_{\vec{\ccr}}$ since $\vec{\ccr}$ stays constant and $\psi=\Omega t$ so that we have $b=0$ while it is exponentially contracting in normal direction $v_{\alpha}$ (with contraction rates $\ev^{\pm}<0$ which are the eigenvalues of the Jacobian for \eref{eq:WSTruncAlpha}. Let $a=\max(\ev^+,\ev^-)<0$. Then $a<0=bm$ for $m>0$ and (iii) is also fulfilled and $\contF_{\delta}^{\t}$ is a NAIM.
		
	\textbf{Step 3:} We proceed by showing that \eref{eq:WSFull} equally possesses a continuous family of periodic orbits. By closing \eref{eq:WSFull} to get equations \eref{eq:truncatedWS}, we truncated the vector field 
	\begin{equation*}%\label{eq:WSTruncTerms1}
		\vec{\tau} = 
		F(Z) \pmatrix{ \rmi\alpha^2 \cr \alpha \cr \vec{0}}
		+G(Z)\pmatrix{ \rmi\alpha \cr 1 \cr \vec{0}}
		+\bar{F}(Z)\pmatrix{ \rmi \cr \alphabar \cr \vec{0}}
	\end{equation*}
	with
	\begin{eqnarray*}
		F(Z) &\defined f(Z)-f(\alpha) \\
		G(Z) &\defined g(Z)-g(\alpha)
	\end{eqnarray*}
	from \eref{eq:WSFull}. Identifying $\level{\vec{\ccr}}\cong\D\times\S\times\{\vec{\ccr}\}$ with the space $\D\times\S$, we truncate
	\begin{equation}\label{eq:WSTruncTerms2}
		\vec{\tau}|_{\level{\vec{\ccr}}} = 
		F(Z) \pmatrix{ \rmi\alpha^2 \cr \alpha}
		+G(Z)\pmatrix{ \rmi\alpha \cr 1 }
		+\bar{F}(Z)\pmatrix{ \rmi \cr \alphabar }
	\end{equation}
	in every level set $\level{\vec{\ccr}}$ for any given $\vec{\ccr}\in\OCr$ where each of the three terms on the right hand side is of the form \eref{eq:VectorFieldOnLevelSet}. Choose $1>r>\abs{\alpha_0}$. In $\D\times\S$, we can identify all $\mathcal{C}_{\vec{\ccr}}^{\t}$ with the stable limit cycle $\{\alpha_0\}\times\S\subset\D_r\times\S$, so there exists an $\varepsilon>0$ such that for every perturbation of the vector field that has $C^1$-norm smaller than $\varepsilon$ in $\D_r\times\S$, the orbits $\mathcal{C}_{\vec{\ccr}}^{\t}$ persist \cite{Shilnikov2001}. But by lemma~\ref{lm:VectorBounds}, there exists an $N_0\in\mathbb{N}$ and a $\delta$-neighborhood $\overline{\OCr}_{\delta}(\vec{\ccrt})$ of $\vec{\ccrt}$ such that for all $N\geq N_0$
	\begin{equation*}
		\abs{\vec{\tau}(\alpha,\psi,\vec{\ccr})}+\norm{\tilde{\DD}\vec{\tau}(\alpha,\psi,\vec{\ccr})} < \varepsilon
	\end{equation*}
	uniformly on $\D_r\times\S\times\overline{\OCr}_{\delta}$ and thus, the $\mathcal{C}_{\vec{\ccr}}^{\t}$ persist, \ie, the full system \eref{eq:WSFull} possesses a periodic orbit $\mathcal{C}_{\vec{\ccr}}\subset\D_r\times\S\times\{\vec{\ccr}\}\subset\level{\vec{\ccr}}$ for every $\vec{\ccr}\in\overline{\OCr}_{\delta}$. 
	
	We can further choose $\delta$ such that the boundary of the union $\contF_{\delta}$ over these orbits is composed of orbits $\mathcal{C}_{\vec{\ccr}}$ with $\vec{\ccr}\in\partial\overline{V}_{\delta}$ and is invariant. We show that $\contF_{\delta}$ is a smooth compact manifold.
	
	\textbf{Step 4:} Note that for $\vec{\ccr}\in\overline{\OCr}_{\delta}$, there exists a smooth immersion $\iota_{\vec{\ccr}}:\mathcal{C}_{\vec{\ccr}}^{\t}\to\D_r\times\S\times\{\vec{\ccr}\}$ whose image $\iota_{\vec{\ccr}}(\mathcal{C}_{\vec{\ccr}}^{\t})=\mathcal{C}_{\vec{\ccr}}$ lies $\varepsilon$-close to $\mathcal{C}_{\vec{\ccr}}^{\t}$ and that $\iota_{\vec{\ccr}}$ depends smoothly on $\vec{\ccr}$ because the truncated terms \eref{eq:WSTruncTerms2} are smooth in $\vec{\ccr}$. Hence, the map
	\begin{eqnarray*}
		\iota:\contF_{\delta}^{\t}&\to\D\times\S\times\OCr \\
		\iota:(\alpha_0,\psi,\vec{\ccr})&\mapsto(\iota_{\vec{\ccr}}(\alpha_0,\psi),\vec{\ccr}),
	\end{eqnarray*}
	is (i) smooth, (ii) one-to-one on its image $\contF_{\delta}$, and (iii) its derivative has full rank $N-2$, in other words, $\iota$ is a smooth immersion and $\contF_{\delta}$ is a compact smooth invariant manifold which is $\mathcal{O}(\varepsilon)$-close to $\contF_{\delta}^{\t}$.
	
	\textbf{Step 5:} We show that $\contF_{\delta}$ is normally hyperbolic: 
	For any fixed $\vec{\ccr}\in\OCr$, let $\Phi^t_{\vec{\ccr}}$ denote the flow on the level set $\level{\vec{\ccr}}$. Then, the flow on $\OT$ is of the form
	\begin{eqnarray*}
		(\alpha,\psi,\vec{\ccr})\mapsto(\Phi^t_{\vec{\ccr}}(\alpha,\psi),\vec{\ccr})
	\end{eqnarray*}
	and is smooth in $\vec{\ccr}$ since the vector field on $\OT$ is smooth. The linearized flow at $\vec{p}=(\alpha,\psi,\vec{\ccr})\in\mathcal{C}_{\vec{\ccr}}\subset\contF_{\delta}\subset\OT$ reads
	\begin{equation*}
		\DD\Phi^t(\vec{p}) = 
		\pmatrix{
		\tilde{\DD}\Phi_{\vec{\ccr}}^t(\vec{p}) & \DD_{\vec{\ccr}}\Phi_{\vec{\ccr}}^t(\vec{p}) \cr
		0 & \id_{N-3}}.
	\end{equation*}
	Let $\mu^{\pm}<0$ denote the two nonzero contraction rates of $\mathcal{C}_{\vec{\ccr}}\subset\level{\vec{\ccr}}$ and $\vec{v}^{\pm}=(v^{\pm}_{\alpha},v^{\pm}_{\psi},\vec{0})$ denote the corresponding eigenvectors of $\DD\Phi^t(\vec{p})$ and let $\mu^0=0$ denote the vanishing contraction rate in tangent direction to $\mathcal{C}_{\vec{\ccr}}$. The remaining $N-3$ eigenvectors of $\DD\Phi^t(\vec{p})$ are also tangent vectors of $\contF_{\delta}$ at $\vec{p}$ and have nonvanishing components in $\vec{\ccr}$-direction since $\contF_{\delta}$ lies transversal to each $\level{\vec{\ccr}}$ that it intersects for sufficiently small $\varepsilon>0$. Since $\vec{\ccr}$ is constant under the flow, the contraction rates in the remaining $N-3$ tangent directions are also zero. Because $\Phi^t$ is smooth in $\vec{\ccr}$, the $\vec{v}^{\pm}$ depend smoothly on $\vec{p}$ so that condition (ii) of definition~\ref{def:NAIM} is readily fulfilled. For the numbers $a$, $b$, and $C$ from condition (iii), we find $b=0$, $a=\max_{\vec{p}\in\contF_{\delta}}(\mu^{\pm}(\vec{p}))<0 = m b$ for $m>0$ and $C=1$ so that $\contF_{\delta}$ is a smooth $m$-normally attracting invariant manifold of \eref{eq:WSFull}.
\end{proof}

We note that $\contF_{\delta}$ is of the form
\begin{equation*}
	\contF_{\delta} = \set{	\alpha(\psi,\vec{\ccr}),\psi,\vec{\ccr}}{\psi\in\S, \vec{\ccr}\in\overline{\OCr}_{\delta}}
\end{equation*}
where $\norm{\alpha(\psi,\vec{\ccr})-\alpha_0}_{C^1}\ll1$ uniformly on $\S\times\overline{\OCr}_{\delta}$.

\subsection{Existence of splay states}

Next, we study the dynamics on the periodic orbit $\mathcal{C}_{\vec{\ccrt}}\subset\level{\vec{\ccrt}}$ of \eref{eq:GeneralSystem}, characterized in theorem~\ref{thm:ManifoldContinuum}. Recall that a \emph{splay state} $\vec{\phi}(t)$ is a $T$-periodic solution for which there exists a $T$-periodic function $\varphi:\mathbb{R}\to\S$ such that
\begin{equation*}
	\phi_j(t) = \varphi\left(t+j\frac{T}{N}\right)
\end{equation*}
holds, \ie, the time series for each of the units $\phi_j$ are copies of each other, shifted multiples of $T/N$ in time. The following assertion holds.
\begin{proposition}\label{prop:SplayState}
	For the continuum $\contF_{\delta}$ from theorem~\ref{thm:ManifoldContinuum}, the periodic solution $(\alpha(t),\psi(t),\vec{\ccrt})$ of \eref{eq:WSFull} with periodic orbit $\mathcal{C}_{\vec{\ccrt}}\subset\level{\vec{\ccrt}}$ yields a splay state $\vec{\phi}(t)$ of \eref{eq:GeneralSystem}.
\end{proposition}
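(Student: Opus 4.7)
The approach is to exploit a discrete $\mathbb{Z}_N$-symmetry of the WS flow that is special to the uniform cross-ratio $\vec{\ccrt}$. Using $\thetat_j=\thetat_1+(j-1)\cdot 2\pi/N$ from \eref{eq:DefThetaTilde} together with the elementary identity $\M{\alpha}{\psi}(z\rme^{\rmi\beta})=\M{\alpha}{\psi+\beta}(z)$, I would first rewrite
\[
\rme^{\rmi\phi_j(t)}=\M{\alpha(t)}{\psi(t)}\!\big(\rme^{\rmi\thetat_j}\big)=\M{\alpha(t)}{\psi(t)+(j-1)\frac{2\pi}{N}}\!\big(\rme^{\rmi\thetat_1}\big),
\]
so that it suffices to show that the rotation $\psi\mapsto\psi+2\pi/N$ of the limit cycle in $\level{\vec{\ccrt}}$ acts as a time translation on $(\alpha(t),\psi(t))$.

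Next I would verify that the WS-equations \eref{eq:alphaDot}--\eref{eq:psiDot} on $\level{\vec{\ccrt}}$ are invariant under this $\mathbb{Z}_N$-shift. From \eref{eq:MoebiusMeanField} combined with lemma~\ref{lm:AveragedRootsOfUnity}, only terms with $k\in N\mathbb{Z}$ survive in the series for $Z(\alpha,\psi,\vec{\ccrt})$, so its $\psi$-dependence enters exclusively through $\rme^{\rmi N\psi}$ and is visibly invariant under $\psi\mapsto\psi+2\pi/N$. Since \eref{eq:alphaDot}--\eref{eq:psiDot} depend on $\psi$ only through $Z$, the whole vector field on $\level{\vec{\ccrt}}$ inherits this symmetry. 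By the uniqueness of $\mathcal{C}_{\vec{\ccrt}}$ from theorem~\ref{thm:ManifoldContinuum}, the shifted solution $(\alpha(t),\psi(t)+2\pi/N)$ must coincide with a time-translate $(\alpha(t+s),\psi(t+s))$ for some $s>0$. Iterating this identity and inserting into the first display yields $\phi_j(t)=\phi_1(t+(j-1)s)$, while one further iteration forces $Ns$ to be an integer multiple $mT$ of the period $T$ of $\mathcal{C}_{\vec{\ccrt}}$, so $s=mT/N$ for some positive integer $m$.

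The remaining step—and the main obstacle—is to rule out $m>1$, which would correspond to a (multi-)cluster state rather than a genuine splay state. I would handle this by proximity to the truncated system \eref{eq:truncatedWS}: on $\level{\vec{\ccrt}}$ the closed system has $\alpha\equiv\alpha_0$ and $\psi=\Omega t+\psi_0$, so its shift satisfies $s=2\pi/(N\Omega)=T/N$ exactly, realising $m=1$. Using the $C^1$-closeness of the full cycle $\mathcal{C}_{\vec{\ccrt}}$ to the truncated one established via lemma~\ref{lm:VectorBounds} in step~3 of the proof of theorem~\ref{thm:ManifoldContinuum}, both $s$ and $T$ depend continuously on the truncation error, so the integer $m$ is constant along this deformation and must remain $m=1$ for sufficiently large $N$. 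Setting $\varphi(t)\defined\phi_1(t)$ then yields $\phi_j(t)=\varphi(t+(j-1)T/N)$, which is precisely the splay state property.
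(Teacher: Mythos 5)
Your proof is correct and reaches the same conclusion, but the final step takes a genuinely different route from the paper. Both arguments identify the same $\mathbb{Z}_N$-equivariance: using lemma~\ref{lm:AveragedRootsOfUnity} to show that on $\level{\vec{\ccrt}}$ the order parameter $Z(\alpha,\psi,\vec{\ccrt})$ depends on $\psi$ only through $\rme^{\rmi N\psi}$, hence the WS vector field \eref{eq:alphaDot}--\eref{eq:psiDot} restricted to $\level{\vec{\ccrt}}$ is invariant under $(\alpha,\psi)\mapsto(\alpha,\psi+2\pi/N)$. The paper then closes the argument by invoking a result from Golubitsky--Stewart \cite{GolubitskyStewart2003} that spatio-temporal symmetries of hyperbolic periodic orbits persist under equivariant perturbations; you instead give a self-contained elementary argument: uniqueness of $\mathcal{C}_{\vec{\ccrt}}$ forces the $\mathbb{Z}_N$ action to map the cycle into itself, equivariance forces it to act as a time shift $s$ with $Ns\in T\mathbb{Z}$, and a continuity/homotopy argument along the straight-line deformation from the truncated vector field (which is also equivariant, and where $s=T/N$ is computed explicitly) forces $m=1$. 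Your version buys a proof that does not rely on an external black-box theorem, at the cost of a slightly longer argument; it would be worth making the deformation parameter explicit (interpolate $\vec{f}_s = \vec{f}_{\mathrm{trunc}} + s(\vec{f}-\vec{f}_{\mathrm{trunc}})$, $s\in[0,1]$, which is equivariant for all $s$ and whose $C^1$-distance to $\vec{f}_{\mathrm{trunc}}$ is bounded by that of $\vec{f}$, so the unique cycle, its period, and the shift vary continuously) rather than speaking loosely of continuity in ``the truncation error''. One small omission: you assert the shift satisfies $s>0$ without comment; this follows since $(\alpha,\psi)\mapsto(\alpha,\psi+2\pi/N)$ has no fixed point on $\D\times\S$ for $N\ge 2$, so it cannot act as the identity on the cycle.
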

\begin{proof}
	Recall that $\thetat_j = -\pi + 2\pi(j-1)/N$. 	For the closed system \eref{eq:truncatedWS}, we found the periodic solution $(\alpha_0,\Omega\,t, \vec{\ccrt})$ with period $T=2\pi/\Omega$. Its phase dynamics $\vec{\phi}^{\t}(t)$ can via the diffeomorphism $\m$ which yields
	\begin{eqnarray*}
		\rme^{\rmi\phi^{\t}_j(t)} \equiv \frac{\alpha_0 + \rme^{\rmi\Omega t + i\thetat_j}}{1 + \alphabar_0\, \rme^{\rmi\Omega t + i\thetat_j}}
		= \frac{\alpha_0 + \rme^{\rmi\Omega\left(t + j\frac{T}{N}\right)+i\thetat_N}}{1 + \alphabar_0\, \rme^{\rmi\Omega\left(t + j\frac{T}{N}\right)+i\thetat_N}}
		\equiv \rme^{\rmi\phi^{\t}_N\left(t+j\frac{T}{N}\right)}
	\end{eqnarray*}
	so that $\phi^{\t}_j(t) = \phi^{\t}_N\left(t + jT/N\right)$ holds and $\vec{\phi}^{\t}(t)$ is in fact a splay state (with $\varphi=\phi_N$). We now assert that the periodic solution $(\alpha(t),\psi(t),\vec{\ccrt})$ for the true system \eref{eq:WSFull} from theorem~\ref{thm:ManifoldContinuum} also yields a splay state $\vec{\phi}(t)$ of \eref{eq:GeneralSystem}. For this to be true, the following condition must hold:
	\begin{equation*}
		\rme^{\rmi\phi_j(t)} \equiv 	\frac{\alpha(t)+\rme^{\rmi\psi(t)+i\thetat_j}}{1+\alphabar(t)\rme^{\rmi\psi(t)+i\thetat_j}} =
		\frac{\alpha\left(t+j\frac{T}{N}\right)+\rme^{\rmi\psi\left(t+j\frac{T}{N}\right)+i\thetat_N}}{1+\alphabar\left(t+j\frac{T}{N}\right)\rme^{\rmi\psi\left(t+j\frac{T}{N}\right)+i\thetat_N}} \equiv \rme^{\rmi\phi_N\left(t+j\frac{T}{N}\right)}
	\end{equation*}
	which is true if the spatio-temporal symmetry
	\begin{eqnarray*}
		\alpha\left(t+j\frac{T}{N}\right) = \alpha(t), \qquad
		\psi\left(t+j\frac{T}{N}\right) = \psi(t) + j\frac{2\pi}{N}
	\end{eqnarray*}
%	\begin{eqnarray*}
%		\alpha\left(t+j\frac{T}{N}\right) &= \alpha(t) \\
%		\psi\left(t+j\frac{T}{N}\right) &= \psi(t) + j\frac{2\pi}{N},\quad j=1,\dots,N
%	\end{eqnarray*}
%	holds for $(\alpha(t),\psi(t),\vec{\ccrt})$.
	with $j=1,\dots,N$ holds for $(\alpha(t),\psi(t),\vec{\ccrt})$.
	To see that this is the case, we note first that \eref{eq:alphaDot} and \eref{eq:psiDot} are equivariant under any transformation that keeps $Z$ and $\alpha$ invariant. On $\level{\vec{\ccrt}}$, the Kuramoto order parameter is given by
	\begin{equation*}
		Z(\alpha,\psi,\vec{\ccrt}) = \alpha- \big(1-\abs{\alpha}^2\big)\sum_{k=1}^{\infty} (\alphabar)^{kN-1}\rme^{\rmi kN\psi}
	\end{equation*}
	so that on $\level{\vec{\ccrt}}$, $Z(\alpha,\psi,\vec{\ccrt})$ and $\alpha$ are invariant under the action of the finite group $\Gamma$ of transformations
	\begin{eqnarray*}
		(\alpha,\psi) &\mapsto \left(\alpha, \psi + j\frac{2\pi}{N}\right)
	\end{eqnarray*}
	with $j=1,\dots,N$. The splay state nature of the $\vec{\phi}(t)$ for the full system is then a consequence of the fact that for hyperbolic orbits, spatio-temporal symmetries are robust under perturbations that leave the system equivariant under the finite group action of the symmetry group of the unperturbed system \cite{GolubitskyStewart2003}.
\end{proof}

In the next section, we discuss the introduction of symmetry-breaking perturbations and how averaging theory can be employed to approximate the dynamics on $\contF_{\delta}$ for such perturbations.

\section{Introducing perturbations}\label{sec:Averaging}

We consider now model \eref{eq:GeneralSystemPerturbed} where we assume that $h$ is smooth and normalized:
\begin{equation*}
	\norm{h}_{L^2} \defined \sqrt{\sum_{n=2}^{\infty} \abs{a_n}^2+\abs{b_{n}}^2} = 1.
\end{equation*}

\subsection{Averaging principle for WS-integrable systems}

The techniques that we employed to prove theorem~\ref{thm:ManifoldContinuum} allow the construction of a NAIM for cross-ratios near $\vec{\ccrt}$. However, normally attracting manifolds can be extended and numerical results indicate that for the classic active rotator model \eref{eq:ActiveRotatorsClassic}, this extension is ``large'' in the sense that periodic orbits exist for all $\vec{\ccr}\in\OCr$ so that $\contF$ intersects every $\level{\vec{\ccr}}$ \cite{Zaks_Tomov_2016,Ronge_Zaks_2021_2}. From now on, we drop the index $\delta$ and consider the largest possible extension $\contF$ of the NAIM of theorem~\ref{thm:ManifoldContinuum} such that $\contF$ still consists of periodic orbits $\mathcal{C}_{\vec{\ccr}}\subset\level{\vec{\ccr}}$, exponentially stable in $\level{\vec{\ccr}}$ and neutrally stable in normal direction to $\level{\vec{\ccr}}$. The following results apply as well but not exclusively to $\contF_{\delta}$ from theorem~\ref{thm:ManifoldContinuum}.

In a slight abuse of notation, we denote by $\contF_{\epsilon}$ the perturbed NAIM for \eref{eq:GeneralSystemPerturbed} with $\epsilon\neq0$ and the original NAIM for \eref{eq:GeneralSystem} as $\contF$. Our main objective is to determine what happens to the infinitely many periodic orbits when $\epsilon\neq0$. To determine robust periodic orbits and their stability, we develop a criterion based on averaging theory \cite{Sanders_Verhulst_Murdock_2007}.

Let $\n=(n_{\alpha},n_{\psi},n_{\vec{\ccr}})=\m^{-1}$ denote the inverse of the smooth diffeomorphism from lemma~\ref{lm:WSChart}. In particular, we have $n_{\ccr}=\vec{\Cr}$. For definiteness, we write
\begin{equation*}
	\vec{h}(\vec{\phi})\defined(h(\phi_1),\dots,h(\phi_N))
\end{equation*}
for the diagonal action of $h$ on $\T^N$. Form the chain rule, we conclude that for every $x\in\{\alpha,\psi,\vec{\ccr}\}$, we have
\begin{equation*}
	\dot{x}=\DD n_x\cdot\dot{\vec{\phi}} = \left(\DD n_x\cdot \vec{h}\right)(\vec{\phi}) \defined \sum_{j=1}^N h(\phi_j)\cdot\DD n_x(\vec{\phi})
\end{equation*}
at the point $\vec{\phi}$ and equation \eref{eq:GeneralSystemPerturbed} can be written in WS-variables as
\begin{eqnarray*}
	\dot{\alpha} &= \rmi\left(f(Z)\alpha^2+g(Z)\alpha+\fbar(Z)\right)  + \epsilon \left(\DD n_{\alpha} \cdot \vec{h} \right) \circ\m(\alpha,\psi,\vec{\ccr}) \\
	\dot{\psi} &= \left(f(Z)\alpha+ g(Z) + \fbar(Z)\alphabar\right) + \epsilon \left(\DD n_{\psi}\cdot \vec{h}\right)\circ\m(\alpha,\psi,\vec{\ccr})\\
	\dot{\vec{\ccr}} &= \epsilon\left(\DD\vec{\Cr}\cdot \vec{h}\right)\circ\m(\alpha,\psi,\vec{\ccr}).
\end{eqnarray*}
We treat the $\mathcal{O}(\epsilon)$-terms above as a perturbation of the WS-integrable system \eref{eq:WSFull}. The perturbation terms in the first two equations are unproblematic in terms of WS-theory since they leave the level sets invariant. Only the last equation makes the system nonintegrable by making the level sets $\level{\vec{\ccr}}$ noninvariant.

The following theorem constitutes our second main result.

\begin{theorem}\label{prop:Averaging}
	For fixed $N$, consider the system \eref{eq:GeneralSystemPerturbed}
	for $\vec{\phi}=(\phi_1,\dots,\phi_N)\in\OT$ where $h$ is smooth and $\norm{h}_{L^2}=1$. For $\epsilon=0$, let
	\begin{equation*}
		\contF = \bigcup_{\vec{\ccr}\in W} \mathcal{C}_{\vec{\ccr}}
	\end{equation*}
	denote the largest extension of the NAIM from theorem~\ref{thm:ManifoldContinuum} in $\OT$ such that $W$ is the largest open set for which for every $\vec{\ccr}\in W\subset\OCr$, there exists a periodic orbit $\mathcal{C}_{\vec{\ccr}}$ which is exponentially stable in $\level{\vec{\ccr}}$ and $\contF$ is a NAIM. For every $\vec{\ccr}\in W$, fix a $\vec{\theta}=\vec{\theta}(\vec{\ccr})\in\mathcal{C}_{\vec{\ccr}}$ and let $\vec{\phi}_{\vec{\ccr}}(t)$ denote the solution of \eref{eq:GeneralSystem} with initial condition $\vec{\phi}_{\vec{\ccr}}(0)=\vec{\theta}$. Then, the following statements hold true:
	\begin{enumerate}
		\item The function
		\begin{equation}\label{eq:AveragedF}
			\vec{F}_h(\vec{\ccr}) \defined \frac{1}{T(\vec{\ccr})} \int_0^{T(\vec{\ccr})} \left(\DD\vec{\Cr}\cdot\vec{h}\right)\circ\vec{\phi}_{\vec{\ccr}}(t) \dd{t},
		\end{equation}
		where $T(\vec{\ccr})$ is the period of $\vec{\phi}_{\vec{\ccr}}$, is continuously differentiable, well-defined, and independent of the choice of $\vec{\theta}(\vec{\ccr})\in\mathcal{C}_{\vec{\ccr}}$.
		
		\item For $N=4$, the function $F_h$ fulfills
		\begin{equation*}
			F_{h}(\ccr)=-F_h(1-\ccr)
		\end{equation*}
		and in particular, we have $F_h(\ccrt)=0$ with $\ccrt=1/2$.
		
		\item For any $N\geq4$,
		\begin{equation*}
			\vec{F}_h(\vec{\ccrt}) = \vec{0}.
		\end{equation*}
	\end{enumerate}
\end{theorem}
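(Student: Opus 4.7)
Assertion~(i) is largely bookkeeping: the integrand $(\DD\vec{\Cr}\cdot\vec{h})\circ\vec{\phi}_{\vec{\ccr}}$ is smooth on $\OT$ and $T(\vec{\ccr})$-periodic, so \eref{eq:AveragedF} is well-defined; a change of base point $\vec{\theta}(\vec{\ccr})\in\mathcal{C}_{\vec{\ccr}}$ merely translates the time variable, which periodicity absorbs; and $C^1$-dependence on $\vec{\ccr}$ follows from the smoothness of $\contF$ together with smooth dependence of the flow on initial data.

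For~(ii), I would exploit the cyclic shift $\sigma:(\phi_1,\phi_2,\phi_3,\phi_4)\mapsto(\phi_2,\phi_3,\phi_4,\phi_1)$. Placing both terms over the common denominator $(\rme^{\rmi\phi_1}-\rme^{\rmi\phi_3})(\rme^{\rmi\phi_2}-\rme^{\rmi\phi_4})$ reduces the identity $\Cr(\vec{\phi})+\Cr(\sigma\vec{\phi})=1$ to a short algebraic verification, so $\sigma$ carries $\mathcal{C}_{\ccr}$ to $\mathcal{C}_{1-\ccr}$ with equal period (the unperturbed vector field being permutation-equivariant). Differentiating the identity in $\vec{\phi}$ gives $\partial_j\Cr|_{\sigma\vec{\phi}}=-\partial_{j+1}\Cr|_{\vec{\phi}}$; combined with $\vec{h}(\sigma\vec{\phi})=\sigma\vec{h}(\vec{\phi})$ and a shift of summation index, this yields the pointwise identity $(\DD\Cr\cdot\vec{h})(\sigma\vec{\phi})=-(\DD\Cr\cdot\vec{h})(\vec{\phi})$. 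Reparameterizing the orbit at $1-\ccr$ as $t\mapsto\sigma\vec{\phi}_{\ccr}(t)$ (allowed by~(i)) and substituting into \eref{eq:AveragedF} gives $F_h(1-\ccr)=-F_h(\ccr)$; evaluating at $\ccrt=1/2$ concludes.

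For~(iii), the strategy is the same, but the cyclic shift no longer acts as simply on the cross-ratio space. By Möbius invariance of $\vec{\Cr}$, the relation $\vec{\Cr}\circ\sigma=\tilde{\vec{c}}_1\circ\vec{\Cr}$ defines a smooth self-map $\tilde{\vec{c}}_1:\OCr\to\OCr$ fixing $\vec{\ccrt}$. Differentiating along $\level{\vec{\ccrt}}$ and proceeding as in~(ii) yields
\begin{equation*}
(\DD\vec{\Cr}\cdot\vec{h})(\sigma\vec{\phi})=A_1\,(\DD\vec{\Cr}\cdot\vec{h})(\vec{\phi}),\qquad A_1\defined\DD\tilde{\vec{c}}_1(\vec{\ccrt}),
\end{equation*}
for every $\vec{\phi}\in\level{\vec{\ccrt}}$. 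Setting $\vec{u}(t)\defined(\DD\vec{\Cr}\cdot\vec{h})(\vec{\phi}_{\vec{\ccrt}}(t))$, the splay symmetry $\vec{\phi}_{\vec{\ccrt}}(t+T/N)=\sigma\vec{\phi}_{\vec{\ccrt}}(t)$ of proposition~\ref{prop:SplayState} iterates to $\vec{u}(t+kT/N)=A_1^k\vec{u}(t)$, and splitting $[0,T]$ into $N$ equal subintervals gives
\begin{equation*}
T\,\vec{F}_h(\vec{\ccrt})=\left(\sum_{k=0}^{N-1}A_1^k\right)\int_0^{T/N}\vec{u}(s)\dd{s}=M\cdot\vec{I}.
\end{equation*}

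The main obstacle is to show $M=0$. From $\sigma^N=\id$ one has $A_1^N=I$, and hence $(A_1-I)M=0$; it is therefore sufficient that $1$ not be an eigenvalue of $A_1$. The plan is to decompose the complexified tangent space $T_{\vec{\thetat}}\OT\otimes\mathbb{C}=\mathbb{C}^N$ into $\sigma$-eigenspaces $V_j=\mathrm{span}(1,\zeta^j,\zeta^{2j},\dots,\zeta^{(N-1)j})$ with $\zeta\defined\rme^{2\pi\rmi/N}$, and then locate the three $\MG$-orbit directions at $\vec{\thetat}$ inside this decomposition: evaluated at $(0,0,\vec{\ccrt})$, the partial derivatives $\partial_\psi\m=(1,\dots,1)$, $\partial_\alpha\m=-\rmi\,\rme^{-\rmi\vec{\thetat}}$, $\partial_{\bar\alpha}\m=\rmi\,\rme^{\rmi\vec{\thetat}}$ (computed as in the proof of lemma~\ref{lm:WSChart}) belong to $V_0$, $V_{N-1}$, $V_1$ respectively. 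Consequently $T_{\vec{\ccrt}}\OCr\otimes\mathbb{C}\cong V_2\oplus\dots\oplus V_{N-2}$, on which $\sigma$ has eigenvalues $\zeta^2,\dots,\zeta^{N-2}$, none equal to $1$; so $A_1-I$ is invertible, $M=0$, and $\vec{F}_h(\vec{\ccrt})=\vec{0}$.
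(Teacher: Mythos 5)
Parts (i) and (ii) of your proposal are essentially identical to the paper's argument: same use of $T$-periodicity for well-posedness, same cyclic shift $\sigma$ (opposite orientation, which is immaterial), same identity $\Cr_{1,2,3,4}\circ\sigma = 1-\Cr_{1,2,3,4}$ carrying $\mathcal{C}_{\ccr}$ to $\mathcal{C}_{1-\ccr}$, and the same pointwise sign flip of the integrand.

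Part (iii) is where you depart from the paper, and your route is genuinely different and correct. The paper switches to the consecutive cross-ratios $\Cr_{k,k+1,k+2,k+3}$, uses the splay symmetry to show all components $(F_h)_k(\vec{\ccrt})$ coincide, then sums over all $N$ cyclic choices and shows by explicit differentiation on $\level{\vec{\ccrt}}$ that the resulting integrand cancels pairwise. You instead define the induced map $\tilde{\vec{c}}_1 = \vec{\Cr}\circ\sigma\circ\vec{\Theta}$ on cross-ratio space, linearize at $\vec{\ccrt}$ to get $A_1$ with $A_1^N = I$, and reduce $T\,\vec{F}_h(\vec{\ccrt}) = \bigl(\sum_{k=0}^{N-1}A_1^k\bigr)\vec{I}$. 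The whole problem collapses to showing $1\notin\mathrm{spec}(A_1)$, which you settle by a Fourier/representation-theoretic decomposition of $\mathbb{C}^N$ into $\sigma$-eigenspaces $V_j$ and by checking that the three Möbius directions $\partial_\psi\m$, $\partial_\alpha\m$, $\partial_{\bar\alpha}\m$ at $(0,0,\vec{\ccrt})$ span exactly $V_0\oplus V_1\oplus V_{N-1}$ (which is $\ker\DD\vec{\Cr}|_{\vec{\thetat}}$), so $A_1$ acts on the quotient with eigenvalues $\zeta^2,\dots,\zeta^{N-2}$. This is a cleaner, more conceptual explanation of \emph{why} the average vanishes: the cyclic symmetry forces the geometric-series matrix $M$ to be zero because $\sigma$ has no invariant vector transversal to the Möbius orbit. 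It is also more coordinate-robust, since it does not depend on the particular choice of functionally independent cross-ratios, whereas the paper must change to the consecutive cross-ratios \eref{eq:MMS_cross_ratios} before its cancellation works; in return, the paper's calculation is elementary (no complexification, no quotient-space reasoning). One small remark: if you wanted to tighten the exposition, you should record that $\DD\vec{\Cr}|_{\sigma\vec{\thetat}} = \DD\vec{\Cr}|_{\vec{\thetat}}$ (because $\sigma\vec{\thetat}$ is a global rotation of $\vec{\thetat}$ and $\vec{\Cr}$ is rotation-invariant with $\DD(\text{rotation})=\id$), which is what lets you read off $A_1$ as the quotient action of the permutation matrix $P$ on $\mathbb{C}^N/(V_0\oplus V_1\oplus V_{N-1})$; as written this step is implicit.
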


We discuss the significance of the function $\vec{F}_h$ for studying the asymptotic dynamics of the perturbed system \eref{eq:GeneralSystemPerturbed} and the implications from statements 2. and 3. above for the existence of robust splay states in it in the supplementary material.%\ref{app:AsymtoticDynamicsThroughF}.

\subsection{Proof of Theorem~\ref{prop:Averaging}}

\subsubsection{Proof of statement 1.}

Let $\vec{\phi}_{\vec{\ccr}}$ be the periodic solution of \eref{eq:GeneralSystemPerturbed} with period $T(\vec{\ccr})>0$ and initial condition $\vec{\phi}_{\vec{\ccr}}(0)=\vec{\theta}=\vec{\theta}(\vec{\ccr})\in\mathcal{C}_{\vec{\ccr}}$. Then, the average of $\DD\vec{\Cr}\cdot\vec{h}$ over $\mathcal{C}_{\vec{\ccr}}$ in the right hand side of \eref{eq:AveragedF} exists. For any $\vec{\theta}'\in\mathcal{C}_{\vec{\ccr}}$, let $(\alpha'(t),\psi'(t),\vec{\ccr})$ be the solution of \eref{eq:WSFull} with $\m(\alpha'(0),\psi'(0),\vec{\ccr})=\vec{\theta}'$ and set $\vec{\phi}_{\vec{\ccr}}'(t)\defined\m(\alpha'(t),\psi'(t),\vec{\ccr})$. Then, there exists a $\tau=\tau(\vec{\theta}')$ such that $\vec{\phi}_{\vec{\ccr}}'(t) = \vec{\phi}_{\vec{\ccr}}(t+\tau)$. But since we average over a full period, the integral is invariant under any shift $\tau$ and thus is independent of the choice of $\vec{\theta}$. $\vec{F}_h$ is continuously differentiable because both $\vec{\Cr}$ and $\vec{h}$ are smooth and $\vec{\phi}_{\vec{\ccr}}$ and $T(\vec{\ccr})$ depend smoothly on the system parameter $\vec{\ccr}$ \cite{Shilnikov2001}.

\subsubsection{Proof of statement 2.}
\begin{proof}
	For $N=4$, the level sets of $\Cr$ in $\OT$ are parameterized by a single cross-ratio $\Cr(\vec{\theta})\defined\Cr_1(\vec{\theta})\defined\Cr_{1,2,3,4}(\vec{\theta})$. Consider the cyclic permutation
	\begin{equation*}
		\sigma = \pmatrix{ 
		1&2&3&4 \cr 
		4&1&2&3 
		}
	\end{equation*}
	and set for any $\vec{\theta}=(\theta^1,\theta^2,\theta^3,\theta^4)\in\OT$
	\begin{eqnarray*}
		\sigma\vec{\theta} = (\theta^{\sigma(1)},\theta^{\sigma(2)},\theta^{\sigma(3)},\theta^
		{\sigma(4)}) = (\theta^4,\theta^1,\theta^2,\theta^3) \in \OT.
	\end{eqnarray*}
	For fixed $\vec{\theta}=\vec{\theta}(\ccr)\in\mathcal{C}_{\ccr}$, let%\footnote{For definiteness, we write in this proof solutions of \eref{eq:GeneralSystem} explicitly with their initial condition.}
	\begin{equation*}
		\vec{\phi}_{\ccr}(t,\vec{\theta}) =  \left(\phi_{\ccr}^1(t,\vec{\theta}),\phi_{\ccr}^2(t,\vec{\theta}),\phi_{\ccr}^3(t,\vec{\theta}),\phi_{\ccr}^4(t,\vec{\theta}) \right)
	\end{equation*}
	denote the solution of the unperturbed system \eref{eq:GeneralSystem} with initial condition $\vec{\phi}_{\ccr}(0)=\vec{\theta}$. In the same spirit as above, we set
	\begin{eqnarray*}
		\sigma\vec{\phi}_{\ccr}(t,\sigma\vec{\theta}) = \left(\phi_{\ccr}^4(t,\vec{\theta}),\phi_{\ccr}^1(t,\vec{\theta}),\phi_{\ccr}^2(t,\vec{\theta}),\phi_{\ccr}^3(t,\vec{\theta}) \right) 
	\end{eqnarray*}
	for the cyclic permutation of $\vec{\phi}_{\ccr}(t,\vec{\theta})$ which again is a solution of \eref{eq:GeneralSystem} since the functions $f$ and $g$ in it depend solely on $Z$ which is invariant under permutations of phases. The periodic orbit of this new solution does in general not lie in $\mathcal{L}_{\ccr}(\Cr)$ because permutations of the components of any $\vec{\theta}$ transform the cross-ratios $\Cr(\vec{\theta})$ \cite{Ahlfors_1953}. Indeed, for $\vec{\theta}\in\mathcal{L}_{\ccr}(\Cr)$, we have $\sigma\vec{\theta}\in\mathcal{L}_{1-\ccr}(\Cr)$ because
	\begin{equation}\label{eq:CrossRatioTransform}
		\eqalign{
		\Cr(\sigma\vec{\theta}) &= \Cr_{1,2,3,4}(\sigma\vec{\theta}) \\
		&= \frac{\big(\rme^{\rmi \theta_4}-\rme^{\rmi \theta_3}\big)\big(\rme^{\rmi \theta_1}-\rme^{\rmi \theta_2}\big)}{\big(\rme^{\rmi \theta_1}-\rme^{\rmi \theta_3}\big)\big(\rme^{\rmi \theta_4}-\rme^{\rmi \theta_2}\big)} \\
		&= \Cr_{4,1,2,3}(\vec{\theta}) \equiv 1-\Cr_{1,2,3,4}(\vec{\theta}) \\
		&= 1-\Cr(\vec{\theta}).
	}
	\end{equation}
	Since  $\sigma\vec{\phi}_{\ccr}(0,\vec{\theta})=\sigma\vec{\theta}=\vec{\phi}_{1-\ccr}(0,\sigma\vec{\theta})$, we have
	\begin{equation}\label{eq:EquivalentOrbits}
		\sigma\vec{\phi}_{\ccr}(t,\vec{\theta}) = \vec{\phi}_{1-\ccr}(t,\sigma\vec{\theta})
	\end{equation}
	by uniqueness of solutions and it follows that the periodic orbit of $\sigma\vec{\phi}_{\ccr}$ is $\mathcal{C}_{1-\ccr}$. Note that from this we also read
	\begin{equation}\label{eq:Period}
		T(\vec{\ccr})=T(1-\vec{\ccr}),
	\end{equation}
	\ie, the orbits $\mathcal{C}_{\ccr}$ and $\mathcal{C}_{1-\ccr}$ have the same period. From \eref{eq:CrossRatioTransform}, we read
	\begin{eqnarray*}
		\left(\DD\Cr_{1,2,3,4}\cdot \vec{h}\right)(\sigma\vec{\theta}) &= \DD_{\vartheta_1}\Cr_{1,2,3,4}(\vec{\vartheta})\cdot h(\vartheta_1) + \DD_{\vartheta_2}\Cr_{1,2,3,4}(\vec{\vartheta})\cdot h(\vartheta_2) + \\ &\quad+\DD_{\vartheta_3}\Cr_{1,2,3,4}(\vec{\vartheta})\cdot h(\vartheta_3) + \DD_{\vartheta_4}\Cr_{1,2,3,4}(\vec{\vartheta})\cdot h(\vartheta_4)\Big|_{\vec{\vartheta}=\sigma\vec{\theta}} \\
		&=\DD_{\theta_4}\Cr_{4,1,2,3}(\vec{\theta})\cdot h(\theta_4) + \DD_{\theta_1}\Cr_{4,1,2,3}(\vec{\theta})\cdot h(\theta_1) + \\ &\quad+\DD_{\theta_2}\Cr_{4,1,2,3}(\vec{\theta})\cdot h(\theta_2) + \DD_{\theta_3}\Cr_{4,1,2,3}(\vec{\theta})\cdot h(\theta_3) \\
		&=\;\left( \DD\Cr_{4,1,2,3}\cdot\vec{h} \right)(\vec{\theta})
	\end{eqnarray*}
	and find, using \eref{eq:EquivalentOrbits}, \eref{eq:Period}, and \eref{eq:CrossRatioTransform},
	\begin{eqnarray*}
		F_h(1-\ccr) &= \frac{1}{T(1-\ccr)} \int_0^{T(1-\ccr)}\left( \DD\Cr_{1,2,3,4}\cdot \vec{h} \right)\circ\vec{\phi}_{1-\ccr}(t,\sigma\vec{\theta})\dd{t} \\
		&= \frac{1}{T(\ccr)}\int_0^{T(\ccr)} \left( \DD\Cr_{1,2,3,4}\cdot \vec{h} \right)\circ\sigma\vec{\phi}_{\ccr}(t,\vec{\theta})\dd{t} \\
		&=\frac{1}{T(\ccr)}\int_0^{T(\ccr)}\left(\DD\Cr_{4,1,2,3}\cdot \vec{h} \right)\circ\vec{\phi}_{\ccr}(t,\vec{\theta}) \dd{t} \\
		&=-\frac{1}{T(\ccr)}\int_0^{T(\ccr)}\left(\DD\Cr_{1,2,3,4}\cdot \vec{h}\right)\circ\vec{\phi}_{\ccr}(t,\vec{\theta})\dd{t} \\
		&=-F_h(\ccr)
	\end{eqnarray*}
%	We arrive at
%	\begin{equation*}
%		-F_h(\ccr) = F_h(1-\ccr)
%	\end{equation*}
%	for the averaged vector field in $\ccr$ and 
	which in particular yields
	\begin{equation*}
		F_h(1/2) = 0.
	\end{equation*}
	Since $\ccrt=1/2$ by \eref{eq:DefLambdaTilde} for $N=4$, $F_h$ indeed vanishes at $\ccrt$.
\end{proof}

\subsubsection{Proof of statement 3.}

Before we come to the case of general $N$, it is instructional to show for $N=4$ again, that $F_h(\ccrt)=\vec{0}$. This is done through the following calculation, using the facts that the average over a full period of $\mathcal{C}_{\ccrt}$ is invariant under shifts in time, that any splay state is of the form $\phi_j(t)=\varphi(t+jT/N)$ with \emph{some} $T$-periodic function $\varphi$ such that for a splay, a shift in time by $T(\ccrt)/N$ is equivalent to a cyclic permutation of units, and equation \eref{eq:CrossRatioTransform}:
\begin{eqnarray*}
	F_h(\ccrt) &= \frac{1}{T(\ccrt)}\int_0^{T(\ccrt)} \left(\DD\Cr_{1,2,3,4}\cdot \vec{h}\right) \circ\vec{\phi}_{\ccrt}(t)\dd{t} \\&=\frac{1}{T(\ccrt)}\int_0^{T(\ccrt)} \left( \DD\Cr_{1,2,3,4}\cdot \vec{h}\right)\circ\vec{\phi}_{\ccrt}\left(t-\frac{T(\ccrt)}{4}\right)\dd{t} \\
	&=\frac{1}{T(\ccrt)}\int_0^{T(\ccrt)} \left( \DD\Cr_{1,2,3,4}\cdot \vec{h}\right)\circ\sigma\vec{\phi}_{\ccrt}(t)\dd{t} \\
	&=\frac{1}{T(\ccrt)}\int_0^{T(\ccrt)} \left( \DD\Cr_{4,1,2,3}\cdot \vec{h}\right)\circ\vec{\phi}_{\ccrt}(t)\dd{t} \\
	&=\frac{1}{T(\ccrt)}\int_0^{T(\ccrt)} \left( -\DD\Cr_{1,2,3,4}\cdot \vec{h}\right)\circ\vec{\phi}_{\ccrt}(t)\dd{t} \\
	&=-F_h(\ccrt).
\end{eqnarray*} 
Note that we do not use the fact that the splay state lies in $\mathcal{L}_{\ccrt}(\Cr)$. This means that the integral above always vanishes for \emph{any} splay state of $N=4$ units \emph{regardless} of whether the governing equations of motion are WS-integrable or whether its orbit lies in $\mathcal{L}_{\ccrt}(\Cr)$. However, this is not the case if one considers $N>4$ where one has to explicitly make use of the fact that the splay state lies in $\level{\vec{\ccrt}}$ as we show below.

Before we come to the proof, we need to discuss our choice \eref{eq:CrossRatios} for the cross-ratio coordinates $\vec{\Cr}$ on $\OT$. The purpose of these coordinates was to parameterize the level sets $\level{\vec{\ccr}}$ that partition $\OT$ and $\contF$ such that for every $\vec{\theta}\in\OT$, $\vec{\ccr}=\vec{\Cr}(\vec{\theta})$ uniquely determines in which set $\level{\vec{\ccr}}$ the point $\vec{\theta}$ lies. But \eref{eq:CrossRatios} is not the only possible choice. In fact, every set of $N-3$ functionally independent cross-ratios $\Cr_{p,q,r,s}$ will do the job and indeed, the authors of \cite{Marvel_Mirollo_Strogatz_2009} used the cross-ratios
\begin{equation}\label{eq:MMS_cross_ratios}
	\Cr_{k,k+1,k+2,k+3}(\vec{\theta}) = \frac{\left(\rme^{\rmi \theta_k}-\rme^{\rmi \theta_{k+3}}\right)\left(\rme^{\rmi \theta_{k+1}}-\rme^{\rmi \theta_{k+2}}\right)} {\big(\rme^{\rmi \theta_k}-\rme^{\rmi \theta_{k+2}}\big)\big(\rme^{\rmi \theta_{k+1}}-\rme^{\rmi \theta_{k+3}}\big)} 
\end{equation}
with $k=1,\dots,N-3$ to parameterize the partition of $\OT$ in terms of level sets. While our previous choice \eref{eq:CrossRatios} was more suitable for the proof of thorem~\ref{thm:ManifoldContinuum}, the cross-ratio functions \eref{eq:MMS_cross_ratios} are suited for the analysis of splay states since they capture the spatio-temporal relation between any four consecutive phases $(\phi_k(t),\phi_{k+1}(t),\phi_{k+2}(t),\phi_{k+3}(t))$ for the splay state:
\begin{eqnarray*}
	\phi_{k+j}(t)=\phi_k(t+jT(\vec{\ccrt})/N), \qquad j=1,2,3.
\end{eqnarray*}
%\begin{eqnarray*}
%	\phi_{k+1}(t)=\phi_k(t+T(\vec{\ccrt})/N) \\
%	\phi_{k+2}(t)=\phi_k(t+2T(\vec{\ccrt})/N) \\	
%	\phi_{k+3}(t)=\phi_k(t+3T(\vec{\ccrt})/N).
%\end{eqnarray*}
Let us therefore from now define the cross-ratio functions $\vec{\Cr}$ as
\begin{equation}\label{eq:NewCrossRatios}
	\eqalign{
	\vec{\Cr}(\vec{\theta}) &\defined\left(\Cr_1(\vec{\theta}),\dots,\Cr_{N-3}(\vec{\theta})\right) \\
	\Cr_k(\vec{\theta})&\defined\Cr_{k,k+1,k+2,k+3}(\vec{\theta}) = \frac{\big(\rme^{\rmi \theta_k}-\rme^{\rmi \theta_{k+3}}\big)\big(\rme^{\rmi \theta_{k+1}}-\rme^{\rmi \theta_{k+2}}\big)} {\big(\rme^{\rmi \theta_k}-\rme^{\rmi \theta_{k+2}}\big)\big(\rme^{\rmi \theta_{k+1}}-\rme^{\rmi \theta_{k+3}}\big)}
	}
\end{equation}
with $k=1,\dots,N-3$. In these new cross-ratio coordinates, the leaf of uniform distributions $\level{\vec{\ccrt}}$ is determined by
\begin{equation*}
	\vec{\ccrt}\defined\vec{\Cr}(\vec{\thetat}) = \left(\ccrt_1,\dots,\ccrt_1\right)
\end{equation*}
where $\ccrt_1$ is given through \eref{eq:DefLambdaTilde} with $k=1$.
For any given periodic orbit $\mathcal{C}_{\vec{\ccr}}\subset\mathcal{L}_{\vec{\ccr}}(\vec{\Lambda})$, hyperbolic fixed points of the averaged system
\begin{equation*}
	\dot{\vec{\ccr}}=\vec{F}_h(\vec{\ccr})
\end{equation*}
where $\vec{F}_h(\vec{\ccr})=((F_h)_1(\vec{\ccr}),\dots,(F_h)_{N-3}(\vec{\ccr}))$ is defined by
\begin{equation*}
	(F_h)_k(\vec{\ccr})\defined \frac{1}{T(\vec{\ccr})} \int_0^{T(\vec{\ccr})} \left( \DD\Cr_{k,k+1,k+2,k+3}\cdot \vec{h} \right)\circ\vec{\phi}_{\vec{\ccr}}(t) \dd{t}
\end{equation*}
for $k=1,\dots,N-3$, correspond to periodic orbits of the original system. With these remarks, we are ready to prove the last statement of theorem~\ref{prop:Averaging}.

\begin{proof}
	Consider the cyclic permutation
	\begin{equation*}
		\sigma = \pmatrix{ 
		1 & 2 & \dots  & N \cr 
		N & 1 & \dots  & N-1 
		}
	\end{equation*}
	and as in the previous section, let for fixed $\vec{\theta}\in\mathcal{C}_{\vec{\ccrt}}$
	\begin{equation*}
		\vec{\phi}_{\vec{\ccrt}}(t) = \left( \phi_{\vec{\ccrt}}^1(t),\dots,\phi_{\vec{\ccrt}}^N(t) \right)
	\end{equation*}
	denote the the splay state solution of \eref{eq:GeneralSystem} with initial condition $\vec{\phi}_{\vec{\ccrt}}(0)=\vec{\theta}$ and set 
	\begin{eqnarray*}
		\sigma\vec{\phi}_{\vec{\ccrt}}(t) \defined \left( \phi_{\vec{\ccrt}}^{\sigma(1)}(t),\phi_{\vec{\ccrt}}^{\sigma(2)},\dots,\phi_{\vec{\ccrt}}^{\sigma(N)}(t) \right).
	\end{eqnarray*}
	Using that for splay states $\phi_{\vec{\ccrt}}^{k+1}(t)=\phi_{\vec{\ccrt}}^k(t+T(\vec{\ccrt})/N)$ holds, we then find
	\begin{equation}\label{eq:SplayStateSpatioTemporal}
		\eqalign{
		\sigma\vec{\phi}_{\vec{\ccrt}}(t) &= \left( \phi_{\vec{\ccrt}}^{\sigma(1)}(t),\phi_{\vec{\ccrt}}^{\sigma(2)},\dots,\phi_{\vec{\ccrt}}^{\sigma(N)}(t) \right) \\
		&=\left( \phi_{\vec{\ccrt}}^{N}(t),\phi_{\vec{\ccrt}}^1(t),\dots,\phi_{\vec{\ccrt}}^{N-1}(t) \right) \\
		&=\left(\phi_{\vec{\ccrt}}^1\left(t-\frac{T(\vec{\ccrt})}{N}\right),\dots,\phi_{\vec{\ccrt}}^N\left(t-\frac{T(\vec{\ccrt})}{N}\right)\right) \\
		&=\vec{\phi}_{\vec{\ccrt}}\left(t-\frac{T(\vec{\ccrt})}{N}\right).
		}
	\end{equation}
	Note that for the choice \eref{eq:NewCrossRatios} for the cross-ratios, we have
	\begin{equation}\label{eq:NewCrossRatioTransform}
		\eqalign{
		\Cr_{k+1,k+2,k+3,k+4}(\sigma\vec{\theta}) &= \frac{\big(\rme^{\rmi \theta_{\sigma(k+1)}}-\rme^{\rmi \theta_{\sigma(k+4)}}\big) \big(\rme^{\rmi \theta_{\sigma(k+2)}}-\rme^{\rmi \theta_{\sigma(k+3)}}\big)}{\big(\rme^{\rmi \theta_{\sigma(k+1)}}-\rme^{\rmi \theta_{\sigma(k+3)}}\big) \big(\rme^{\rmi \theta_{\sigma(k+2)}}-\rme^{\rmi \theta_{\sigma(k+4)}}\big)} \\
		&= \frac{\big(\rme^{\rmi \theta_{k}}-\rme^{\rmi \theta_{k+3}}\big) \big(\rme^{\rmi \theta_{k+1}}-\rme^{\rmi \theta_{k+2}}\big)}{\big(\rme^{\rmi \theta_{k}}-\rme^{\rmi \theta_{k+2}}\big) \big(\rme^{\rmi \theta_{k+1}}-\rme^{\rmi \theta_{k+3}}\big)} \\
		&= \Cr_{k,k+1,k+2,k+3}(\vec{\theta})
		}
	\end{equation}
	for all $\vec{\theta}\in\OT$. Because of the spatio-temporal symmetry \eref{eq:SplayStateSpatioTemporal} of the splay state and using \eref{eq:NewCrossRatioTransform}, we can for $k=1,\dots,N$ (where we set $N+1\equiv1$, $N+2\equiv2$, and $N+3\equiv3$) write
	\begin{eqnarray*}
		(F_h)_{k+1}(\vec{\ccrt}) &\defined \frac{1}{T(\vec{\ccrt})} \int_0^{T(\vec{\ccrt})} \left( \DD\Cr_{k+1,k+2,k+3,k+4}\cdot \vec{h} \right)\circ\vec{\phi}_{\vec{\ccrt}}(t)\dd{t} \\
		&=\frac{1}{T(\vec{\ccrt})} \int_0^{T(\vec{\ccrt})} \left( \DD\Cr_{k+1,k+2,k+3,k+4}\cdot \vec{h} \right)\circ\vec{\phi}_{\vec{\ccrt}}\left(t-\frac{T(\vec{\ccrt})}{N}\right)\dd{t} \\
		&=\frac{1}{T(\vec{\ccrt})} \int_0^{T(\vec{\ccrt})} \left( \DD\Cr_{k+1,k+2,k+3,k+4}\cdot \vec{h} \right)\circ\sigma\vec{\phi}_{\vec{\ccrt}}(t)\dd{t} \\
		&=\frac{1}{T(\vec{\ccrt})} \int_0^{T(\vec{\ccrt})} \left( \DD\Cr_{k,k+1,k+2,k+3}\cdot \vec{h} \right)\circ\vec{\phi}_{\vec{\ccrt}}(t)\dd{t} \\
		&=(F_h)_{k}(\vec{\ccrt}).
	\end{eqnarray*}
	This means that in particular, the $N-3$ components (\ie, $k=1,\dots,N-3$) of $\vec{F}_h(\vec{\ccrt})$ coincide. We also have for every $j=1,\dots,N-3$
	\begin{eqnarray*}
		(F_h)_j(\vec{\ccrt}) &= \frac{1}{N} \sum_{k=1}^N (F_h)_k(\vec{\ccrt}) \\
		&= \frac{1}{NT(\vec{\ccrt})} \int_0^{T(\vec{\ccrt})} \left( \sum_{k=1}^N \DD\Cr_{k,k+1,k+2,k+3}\cdot\vec{h} \right) \circ\vec{\phi}_{\vec{\ccrt}}(t)\dd{t} \\
		&= \frac{1}{NT(\vec{\ccrt})} \int_0^{T(\vec{\ccrt})} \sum_{k=1}^N \Big( \DD_{\theta_{k}}\Cr_{k,k+1,k+2,k+3}(\vec{\theta})\cdot h(\theta_{k}) + \\
		&\hspace{3.45cm}+ \DD_{\theta_{k+1}}\Cr_{k,k+1,k+2,k+3}(\vec{\theta})\cdot h(\theta_{k+1}) + \\
		&\hspace{3.45cm}+ \DD_{\theta_{k+2}}\Cr_{k,k+1,k+2,k+3}(\vec{\theta})\cdot h(\theta_{k+2}) + \\
		&\hspace{3.45cm}+\DD_{\theta_{k+3}}\Cr_{k,k+1,k+2,k+3}(\vec{\theta})\cdot h(\theta_{k+3})\Big)\Big|_{\vec{\theta} = \vec{\phi}_{\vec{\ccrt}}(t)}\dd{t}.
	\end{eqnarray*}
	Rearranging this sum by collecting all terms that contain derivatives with respect to a given $\theta_k$ yields	
	\begin{equation}\label{eq:vanishingIntegral}
		\eqalign{
		(\hat{F}_h)_j(\vec{\ccrt})&= \frac{1}{NT(\vec{\ccrt})} \int_0^{T(\vec{\ccrt})} \sum_{k=1}^N \DD_{\theta_k}\Big(\Cr_{k,k+1,k+2,k+3}(\vec{\theta}) + \\
		&\hspace{4cm}+\Cr_{k-1,k,k+1,k+2}(\vec{\theta})+ \\
		&\hspace{4cm}+\Cr_{k-2,k-1,k,k+1}(\vec{\theta})+ \\
		&\hspace{4cm}+
		\Cr_{k-3,k-2,k-1,k}(\vec{\theta})\Big)\cdot h(\theta_k) \Big|_{\vec{\theta}=\vec{\phi}_{\vec{\ccrt}}(t)}\dd{t}.
		}
	\end{equation}
	All terms in the sum in \eref{eq:vanishingIntegral} vanish so the integral yields zero. To see this, observe that for any $\vec{\theta}\in\mathcal{L}_{\vec{\ccrt}}(\vec{\Lambda})$, there exists $(\alpha,\psi)\in\D\times\S$ such that $\rme^{\rmi \vec{\theta}}=\M{\alpha}{\psi}\left(\rme^{\rmi \vec{\thetat}}\right)$ with $\vec{\thetat}$ from \eref{eq:DefThetaTilde}, yielding
	\begin{eqnarray*}
		\DD_{\theta_k} \Cr_{k-3,k-2,k-1,k}(\vec{\theta}) &= \rmi\rme^{\rmi \theta_k} \frac{\left( \rme^{\rmi \theta_{k-3}} - \rme^{\rmi \theta_{k-2}} \right) \left( \rme^{\rmi \theta_{k-2}} - \rme^{\rmi \theta_{k-1}} \right)}{\left( \rme^{\rmi \theta_{k-3}} - \rme^{\rmi \theta_{k-1}} \right)\left( \rme^{\rmi \theta_{k-2}} - \rme^{\rmi \theta_{k}} \right)^2} \\
		&= \frac{\rmi\rme^{\rmi \frac{2\pi(k-1)}{N}-\rmi\psi}\left( \rme^{\rmi \frac{2\pi k}{N}+\rmi\psi} - \rme^{\rmi \frac{2\pi}{N}}\alpha \right) \left( \rme^{\rmi \frac{2\pi}{N}}-\rme^{\rmi \frac{2\pi k}{N}+\rmi\psi} \alphabar \right)} {\left( 1-\rme^{\rmi \frac{2\pi}{N}}\right)\left(1+\rme^{\rmi \frac{2\pi}{N}}\right)^3\left(1-\abs{\alpha}^2 \right)} \\
		&= -\rmi\rme^{\rmi \theta_k} \frac{\left( \rme^{\rmi \theta_{k+3}} - \rme^{\rmi \theta_{k+2}} \right) \left( \rme^{\rmi \theta_{k+2}} - \rme^{\rmi \theta_{k+1}} \right)}{\left( \rme^{\rmi \theta_{k+3}} - \rme^{\rmi \theta_{k+1}} \right)\left( \rme^{\rmi \theta_{k+2}} - \rme^{\rmi \theta_{k}} \right)^2} \\
		&= -\DD_{\theta_k}\Cr_{k,k+1,k+2,k+3}(\vec{\theta})
	\end{eqnarray*}
	so that the first and last term in each summand in \eref{eq:vanishingIntegral} cancel out. We also have
	\begin{eqnarray*}
		\fl\DD_{\theta_k} \Cr_{k-2,k-1,k,k+1}(\vec{\theta}) &= \rmi\rme^{\rmi \theta_k} \frac{\left( \rme^{\rmi \theta_{k-1}} - \rme^{\rmi \theta_{k-2}} \right) \left( \rme^{\rmi \theta_{k+1}} - \rme^{\rmi \theta_{k-2}} \right)}{\left( \rme^{\rmi \theta_{k-1}} - \rme^{\rmi \theta_{k+1}} \right)\left( \rme^{\rmi \theta_{k}} - \rme^{\rmi \theta_{k-2}} \right)^2} \\
		&= -\frac{\rmi\left( 1+\rme^{\rmi \frac{2\pi}{N}}+\rme^{\rmi \frac{4\pi}{N}}\right)\left( 1-\rme^{-\rmi\frac{2\pi (k-1)}{N}-\rmi\psi}\alpha \right) \left( \rme^{\rmi \frac{2\pi}{N}}-\rme^{\rmi \frac{2\pi k}{N}+\rmi\psi}\alphabar \right)} {\left( 1-\rme^{\rmi \frac{2\pi}{N}} \right) \left( 1+\rme^{\rmi \frac{2\pi}{N}} \right)^3\left(1-\abs{\alpha}^2\right)} \\
		&= -\rmi\rme^{\rmi \theta_k} \frac{\left( \rme^{\rmi \theta_{k-1}} - \rme^{\rmi \theta_{k+2}} \right) \left( \rme^{\rmi \theta_{k+1}} - \rme^{\rmi \theta_{k+2}} \right)}{\left( \rme^{\rmi \theta_{k-1}} - \rme^{\rmi \theta_{k+1}} \right)\left( \rme^{\rmi \theta_{k}} - \rme^{\rmi \theta_{k+2}} \right)^2} \\
		&=-\DD_{\theta_k} \Cr_{k-1,k,k+1,k+2}(\vec{\theta})
	\end{eqnarray*}
	so that the second and third term in the summand also cancel and the integrands for all components of $\vec{F}_h(\vec{\ccrt})$ vanish. This finishes the proof of statement 3. of theorem~\ref{prop:Averaging}.
\end{proof}

%%%%%%%%%%%%%%%%%%%%%%%%%%%%%%%%%%%%%%%%%%%%%%%%%%%%%%%%%%%%%%
%%%%%%%%%%%%%%%%%%%%%%%%%%%%%%%%%%%%%%%%%%%%%%%%%%%%%%%%%%%%%%
%%%%%%%%%%%%%%%%%%%%%%%%%%%%%%%%%%%%%%%%%%%%%%%%%%%%%%%%%%%%%%
%%%%%%%%%%%%%%%%%%%%%%%%%%%%%%%%%%%%%%%%%%%%%%%%%%%%%%%%%%%%%%

\section{Application: repulsively coupled identical active rotators}\label{sec:ARWS}

Finally, we apply theorems~\ref{thm:ManifoldContinuum} and \ref{prop:Averaging} to our original motivational model of identical classic and generalized active rotators \eref{eq:ActiveRotatorsClassic} and \eref{eq:ActiveRotatorPerturbed}, starting with the classic case.

\subsection{Classic active rotators}

Recall that in order for \eref{eq:ActiveRotatorsClassic} to describe a system of classic active rotators and not a full blown oscillators, $\abs{\omega}<1$ holds. First, we determine the NAIM from theorem~\ref{thm:ManifoldContinuum}.

We start by writing down \Eref{eq:ActiveRotatorsClassic} in terms of WS-variables. Comparing with \eref{eq:GeneralSystem}, we have $g(Z)=\omega$ and $f(Z)=\frac{\rmi}{2}\left(1+\kappa\Zbar\right)$ so that in WS-variables, we have
\begin{eqnarray*}
	\dot{\alpha} &= -\frac{1}{2}(1+\kappa\Zbar)\alpha^2 + \rmi\omega\alpha+ \frac{1}{2}(1+\kappa Z) \\
	\dot{\psi} &= \frac{\rmi}{2}(1+\kappa\Zbar)\alpha + \omega - \frac{\rmi}{2}(1+\kappa Z)\alphabar \\
	\dot{\vec{\ccr}} &= \vec{0}.
\end{eqnarray*}
Replacing $Z$ by $\alpha$ then yields the closed (truncated) equation
\begin{eqnarray}
	\dot{\alpha} = -\frac{1}{2}(1+\kappa\alphabar)\alpha^2+\rmi\omega\alpha +\frac{1}{2}(1+\kappa\alpha) \label{eq:truncatedAlphaDot}
%	\dot{\psi} &=\omega - \Im\alpha\\ %\label{eq:truncatedPsiDot} \\
%	\dot{\vec{\ccr}} &= \vec{0}. %\label{eq:truncatedLambdaDot}
\end{eqnarray}
for $\dot{\alpha}$. We proceed by determining the fixed point $\alpha_0$ for \eref{eq:truncatedAlphaDot} in theorem~\ref{thm:ManifoldContinuum}. Setting $\alpha=r\rme^{\rmi \beta}$ and subsequently $\dot{\alpha}=\dot{r}\rme^{\rmi \beta}+\rmi\dot{\beta}r\rme^{\rmi \beta}$ yields
\begin{eqnarray*}
	\fl\dot{r}+ \rmi\dot{\beta}r 
	&= \left(-\frac{1}{2}r^2\cos\beta- \frac{\kappa}{2}r^3+\frac{1}{2}\cos\beta+\frac{\kappa}{2}r\right)+ \rmi\left(-\frac{1}{2}r^2\sin\beta+\omega r-\frac{1}{2}\sin\beta\right).
\end{eqnarray*}
The fixed point condition implies
\begin{eqnarray*}
	(r^2-1)\cos\beta+(r^2-1)\kappa r = 0\qquad\textrm{and}\qquad
	\frac{1}{2}(r^2+1)\sin\beta = \omega r
\end{eqnarray*}
and thus, since $r<1$,
\begin{equation}\label{eq:trigbeta}
	\eqalign{
	\cos\beta = -\kappa r\qquad \textrm{and}\qquad
	\sin\beta = \frac{2\omega r}{(1+r^2)}.
	}
\end{equation}
Eliminating the trigonometric terms and setting $x=r^2$ yields a cubic equation in $x$:
\begin{equation}\label{eq:cubic_equation}
	0 = \kappa^2x^3 + (2\kappa^2-1)x^2+(\kappa^2+4\omega^2-2)x-1.
\end{equation}
We make the following claim:

\begin{lemma}
	The cubic equation \eref{eq:cubic_equation} has for $\omega^2<1$ exactly one real root $x\in(0,1)$ if $\kappa^2>1-\omega^2$ and no real roots in $(0,1)$ if $\kappa^2<1-\omega^2$.
	\label{lm:cubic_equation}
\end{lemma}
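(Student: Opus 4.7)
The plan is to reformulate the cubic so that the number of roots in $(0,1)$ follows from a one-line monotonicity argument. Setting $x=r^2$, equation \eref{eq:cubic_equation} is precisely what one obtains by applying the Pythagorean identity $\cos^2\beta+\sin^2\beta=1$ to the relations \eref{eq:trigbeta} and clearing denominators by $(1+x)^2$. Equivalently, writing
\begin{equation*}
	Q(x)\defined\frac{4\omega^2 x}{(1+x)^2},\qquad L(x)\defined 1-\kappa^2 x,
\end{equation*}
the cubic $P(x)$ vanishes if and only if $Q(x)=L(x)$.

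The first step is to differentiate: $Q'(x)=4\omega^2(1-x)/(1+x)^3>0$ on $(0,1)$ whenever $\omega\neq0$, while $L'(x)=-\kappa^2\leq0$. Therefore, excluding the degenerate pair $\omega=\kappa=0$, the difference $D\defined Q-L$ is strictly increasing on $(0,1)$, so $P$ has \emph{at most} one root in that interval. Next, I read off the endpoint values $D(0)=-1$ and $D(1)=\omega^2+\kappa^2-1$. The intermediate value theorem, combined with strict monotonicity, then produces exactly one root in $(0,1)$ when $\kappa^2>1-\omega^2$, while if $\kappa^2<1-\omega^2$ the strictly increasing $D$ is still negative at $x=1$ and hence has no root in $(0,1)$.

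The only real subtlety is the two degenerate sub-cases that fall outside the monotonicity argument. For $\omega=0$ I would factor directly: $P(x)=(x+1)^2(\kappa^2 x-1)$, whose unique root in $(0,1)$ is $x=1/\kappa^2$, present precisely when $\kappa^2>1=1-\omega^2$. For $\kappa=0$ the cubic degenerates to $x^2+(2-4\omega^2)x+1=0$ with discriminant $16\omega^2(\omega^2-1)<0$ under the standing hypothesis $\omega^2<1$, so no real roots exist at all, consistent with $\kappa^2=0<1-\omega^2$. These cases are easy but worth isolating explicitly.
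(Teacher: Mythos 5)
Your proof is correct. It shares the same algebraic core as the paper's argument --- both exploit the factorization $P(x)=(1+x)^2(\kappa^2x-1)+4\omega^2x$, i.e.\ that clearing the $(1+x)^2$ denominator in the Pythagorean identity for \eref{eq:trigbeta} reproduces \eref{eq:cubic_equation} --- but the monotonicity argument is run on a different function. The paper solves for $\kappa^2=1/x-4\omega^2/(1+x)^2$ and shows that the right-hand side is a strictly decreasing bijection of $(0,1)$ onto $(1-\omega^2,\infty)$; establishing that it decreases requires the hypothesis $\omega^2<1$ together with the auxiliary estimate $(1+x)^3/(8x^2)>1$ on $(0,1)$. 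You instead fix the parameters and show that $D=Q-L$ is strictly increasing; since $Q'\geq0$ and $-L'=\kappa^2\geq0$ termwise, this needs no bound on $\omega$ and no auxiliary lemma, only the exclusion of $\omega=\kappa=0$. The endpoint values $D(0)=-1$ and $D(1)=\kappa^2+\omega^2-1$ then yield the dichotomy directly, so your version is marginally cleaner and in fact proves slightly more (the conclusion does not use $\omega^2<1$ except through the statement itself). Two minor remarks: only the doubly degenerate case $\omega=\kappa=0$ actually escapes your monotonicity argument --- the cases $\omega=0,\kappa\neq0$ and $\omega\neq0,\kappa=0$ are already covered by it, so your separate treatment of them is redundant though harmless; and in the $\kappa=0$ sub-case the discriminant $16\omega^2(\omega^2-1)$ is zero rather than negative when $\omega=0$, but that point is already disposed of by your $\omega=0$ factorization, so nothing is missing.
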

\begin{proof}
	We solve \eref{eq:cubic_equation} for $\kappa^2$ and find
	\begin{equation*}
		\kappa^2 = \frac{1}{x} - \frac{4\omega^2}{(1+x)^2}.
	\end{equation*}
	We show that for any $\omega^2<1$ the map $(0,1)\ni x\mapsto 1/x - 4\omega^2/(1+x)^2\in(1-\omega^2,\infty)$ is a bijection from which the claim follows because bijectivity implies conversely that for any $\omega^2<1$ and $\kappa^2>1-\omega^2$ there exists a unique $x\in(0,1)$ that solves \eref{eq:cubic_equation} and there is no such $x$ for $\kappa^2<1-\omega^2$.
	\\
	Injectivity: The map is differentiable in $(0,1)$ so that
	\begin{eqnarray*}
		\diff{\kappa^2}{x} = -\frac{1}{x^2} + \frac{8\omega^2}{(1+x)^3} &< 0  \quad
		\Leftrightarrow\quad \frac{(1+x)^3}{8x^2} &> \omega^2.
	\end{eqnarray*}
	But $\omega^2<1<(1+x)^3/8x^2$ holds because $(1+x)^3/8x^2$ is strictly monotonically decreasing in $(0,1)$ since $\dd{}/\dd{x}\left((1+x)^3/8x^2\right)=(x-2)(1+x)^2/8x^3<0\;\forall x\in(0,1)$ and its infimum is $\lim_{x\to1}(1+x)^3/8x^2=1$. Hence $\kappa^2$ as a function of $x$ is strictly monotonically decreasing and therefore injective.
	\\
	Surjectivity: Because $\kappa^2$ is as a function of $x$ continuous and strictly monotonically decreasing, we find that its image $(a,b)$ is given by
	\begin{eqnarray*}
		a &= \lim_{x\to1}\frac{1}{x}-\frac{4\omega^2}{(1+x)^2} = 1-\omega^2 \qquad \textrm{and}\qquad
		b &= \lim_{x\to0}\frac{1}{x}-\frac{4\omega^2}{(1+x)^2} = \infty
	\end{eqnarray*}
	so that the map is indeed surjective and thus bijective.
\end{proof}

Using \eref{eq:trigbeta}, we write
\begin{equation*}%\label{eq:Alpha0}
	\alpha_0=r(\cos\beta+ i\sin\beta) = -\kappa r^2 + 2\rmi \frac{\omega r^2}{1+r^2}
\end{equation*}
and find from lemma~\ref{lm:cubic_equation} that as long as $\omega\neq0$
\begin{eqnarray*}
	f(\alpha_0) &= \frac{\rmi}{2}\left(1+\kappa\alphabar_0\right)
	= \frac{\omega\kappa r^2}{1+r^2} + \frac{\rmi}{2}\left(1-\kappa^2r^2\right)
	\neq0
\end{eqnarray*}
since $\kappa^2>1-\omega^2>0$ and so we have
\begin{equation*}%\label{eq:psi_dot_of_r}
	\dot{\psi} = \omega - \Im\alpha_0 = \omega\left( 1 - \frac{2r^2}{1+r^2} \right) = \omega \frac{1-r^2}{1+r^2} \revdefined \Omega\neq0.
\end{equation*}
Note that for $\omega=0$, the contours $\mathcal{C}_{\vec{\ccr}}$ and therefore the manifold $\contF_{\delta}$ consist of fixed points with two stable and $N-2$ neutral directions.

We determine the stability of $\alpha_0$. Again, we treat $\alpha$ and $\alphabar$ as independent variables in $\D$ so that the dynamics in $\D$ is given by
\begin{eqnarray*}
	\dot{\alpha} &= -\frac{1}{2}(1+\kappa\alphabar)\alpha^2+\rmi\omega\alpha+\frac{1}{2}(1+\kappa\alpha).
\end{eqnarray*}
Determining the Jacobian of the right hand side of this equation yields
\begin{equation}\label{eq:alphaJacobian}
	\frac{\partial(\dot{\alpha},\dot{\alphabar})}{\partial(\alpha,\alphabar)} = 
	\pmatrix{
	-(1+\kappa\alphabar)\alpha+\rmi\omega+\frac{\kappa}{2} & -\frac{\kappa}{2}\alpha^2 \cr
	-\frac{\kappa}{2}\alphabar^2 & -(1+\kappa\alpha)\alphabar -\rmi\omega+\frac{\kappa}{2}
	}.
\end{equation}
At the fixed point $\alpha_0=r\rme^{\rmi \beta}$, using equations \eref{eq:trigbeta}, the eigenvalues of \eref{eq:alphaJacobian} read
\begin{equation}\label{eq:alpha_eigenvalues}
	\ccr_{\pm} = \frac{\kappa}{2} \pm \sqrt{\frac{\kappa^2}{4}x^2 - \omega^2\left(\frac{1-x}{1+x}\right)^2}.
\end{equation}
It follows that $\mathrm{sign}\,\Re\,\ccr_{\pm}=\mathrm{sign}\,\kappa$:  We have two cases, that of $\ccr_{\pm}\in\mathbb{R}$ and that of $\ccr_{\pm}\notin\mathbb{R}$. In the first case, the radicand in \eref{eq:alpha_eigenvalues} is nonnegative and we have
\begin{equation*}
	\frac{\kappa^2}{4}>\frac{\kappa^2}{4}x^2-\omega^2\left(\frac{1-x}{1+x}\right)^2
\end{equation*}
since $x\in(0,1)$ and the subtraction of a nonnegative term and hence $\mathrm{sign}\,\Re\ccr_{\pm}=\mathrm{sign}\,\kappa$. In the second case, the radicand is negative and the statement holds trivially.

To conclude, the above results on the existence and stability of $\alpha_0$ show that the system \eref{eq:ActiveRotatorsClassic} fulfills the requirements of theorem~\ref{thm:ManifoldContinuum} if $0<\abs{\omega}<1$ and $\kappa<-\sqrt{1-\omega^2}$. This establishes the final result of this section:

\begin{theorem}\label{thm:ManifoldContinuumAR} 
	Let  $\vec{\phi}(t)=(\phi_1(t),\dots,\phi_N(t))\in \OT$ obey
	\eref{eq:ActiveRotatorsClassic} with parameters $0<\omega^2<1$ and $\kappa<-\sqrt{1-\omega^2}$. Then, for every $\delta>0$, there exists an $N_0\in\mathbb{N}$ such that for all $N\geq N_0$, there exists a closed $\delta$-neighborhood $\overline{\OCr}_{\delta}=\overline{\OCr}_{\delta}(\vec{\ccrt})\subset \OCr$ of $\vec{\ccrt}$, where for every $\vec{\ccr}\in \overline{\OCr}_{\delta}$, there exists a unique periodic orbit $\mathcal{C}_{\vec{\ccr}}\subset\level{\vec{\ccr}}$ which is exponentially stable in $\level{\vec{\ccr}}$. The union
	\begin{equation*}
		\contF(\delta) \defined \bigcup_{\vec{\ccr}\in \overline{\OCr}_{\delta}}\mathcal{C}_{\vec{\ccr}} \subset \OT
	\end{equation*}
	forms a compact normally attracting manifold of dimension $N-2$. Further, the periodic solution $\vec{\phi}_{\vec{\ccrt}}(t)$ with orbit $\mathcal{C}_{\vec{\ccrt}}\subset\contF_{\delta}$ is a splay state.
\end{theorem}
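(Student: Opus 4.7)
The proof is a direct application of theorem~\ref{thm:ManifoldContinuum} and proposition~\ref{prop:SplayState}, with the three required hypotheses essentially already verified in the preceding discussion. My plan is therefore to pair each verified ingredient with the abstract hypothesis it fulfills, and then invoke the two abstract results.

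First, I identify the classic active rotator model~\eref{eq:ActiveRotatorsClassic} with a system of the form~\eref{eq:GeneralSystem} via $g(Z)=\omega$ and $f(Z)=\frac{\rmi}{2}(1+\kappa\Zbar)$, so that the closed truncated $\alpha$-equation is~\eref{eq:truncatedAlphaDot}. Theorem~\ref{thm:ManifoldContinuum} then requires two things of the truncated system: (a) a unique stable fixed point $\alpha_0\in\D$, and (b) $f(\alpha_0)\neq 0$.

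For (a), the fixed-point conditions reduce via~\eref{eq:trigbeta} to the cubic~\eref{eq:cubic_equation} in $x=r^2$. Under the standing hypothesis $\kappa^2>1-\omega^2$ (which is precisely $\kappa<-\sqrt{1-\omega^2}$ combined with $\omega^2<1$), lemma~\ref{lm:cubic_equation} supplies a unique root $x\in(0,1)$, and the relations~\eref{eq:trigbeta} then determine $\beta$ uniquely modulo $2\pi$, giving a unique $\alpha_0=r\rme^{\rmi\beta}\in\D$. Exponential stability follows from the eigenvalue formula~\eref{eq:alpha_eigenvalues} together with the sign analysis $\mathrm{sign}\,\Re\,\ccr_{\pm}=\mathrm{sign}\,\kappa<0$, which splits into the real-eigenvalue case (where the radicand is bounded above by $\kappa^2/4$, so $\kappa/2+\sqrt{\cdot}<0$) and the complex-conjugate case (where $\Re\,\ccr_{\pm}=\kappa/2<0$ directly). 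For (b), the explicit formula $f(\alpha_0)=\omega\kappa r^2/(1+r^2)+\frac{\rmi}{2}(1-\kappa^2 r^2)$ has nonzero real part as soon as $\omega\neq 0$, which is guaranteed by $0<\omega^2$.

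With (a) and (b) established, theorem~\ref{thm:ManifoldContinuum} directly delivers, for sufficiently large $N$, the NAIM $\contF_{\delta}$ together with the uniqueness and exponential stability of each $\mathcal{C}_{\vec{\ccr}}\subset\level{\vec{\ccr}}$ and the dimension count $N-2$. The final assertion that $\vec{\phi}_{\vec{\ccrt}}(t)$ is a splay state then follows at once from proposition~\ref{prop:SplayState} applied to the orbit $\mathcal{C}_{\vec{\ccrt}}\subset\level{\vec{\ccrt}}$. No substantial obstacle remains at this stage: all the genuine work was packaged into lemma~\ref{lm:cubic_equation} (ruling out spurious roots in $(0,1)$ via a monotonicity argument), into theorem~\ref{thm:ManifoldContinuum} itself (whose $C^1$-perturbation estimates handle the $N\to\infty$ limit), and into proposition~\ref{prop:SplayState} (which extracts the spatio-temporal symmetry from $\Gamma$-equivariance). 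Conditional on those pieces, theorem~\ref{thm:ManifoldContinuumAR} reduces to the bookkeeping above.
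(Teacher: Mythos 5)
Your proposal is correct and follows exactly the paper's route: the paper's proof of this theorem is literally the one-line remark that it is a corollary of theorem~\ref{thm:ManifoldContinuum} and proposition~\ref{prop:SplayState}, with the hypotheses verified in the preceding discussion via lemma~\ref{lm:cubic_equation}, the eigenvalue analysis \eref{eq:alpha_eigenvalues}, and the explicit computation of $f(\alpha_0)$. You have simply made that bookkeeping explicit, which matches the paper's intent.
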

\begin{proof}
	This is a corollary of theorem~\ref{thm:ManifoldContinuum} and proposition~\ref{prop:SplayState}.
\end{proof}

\subsection{Generalized active rotators}

\begin{figure}
	\centering
	\includegraphics[width=\linewidth]{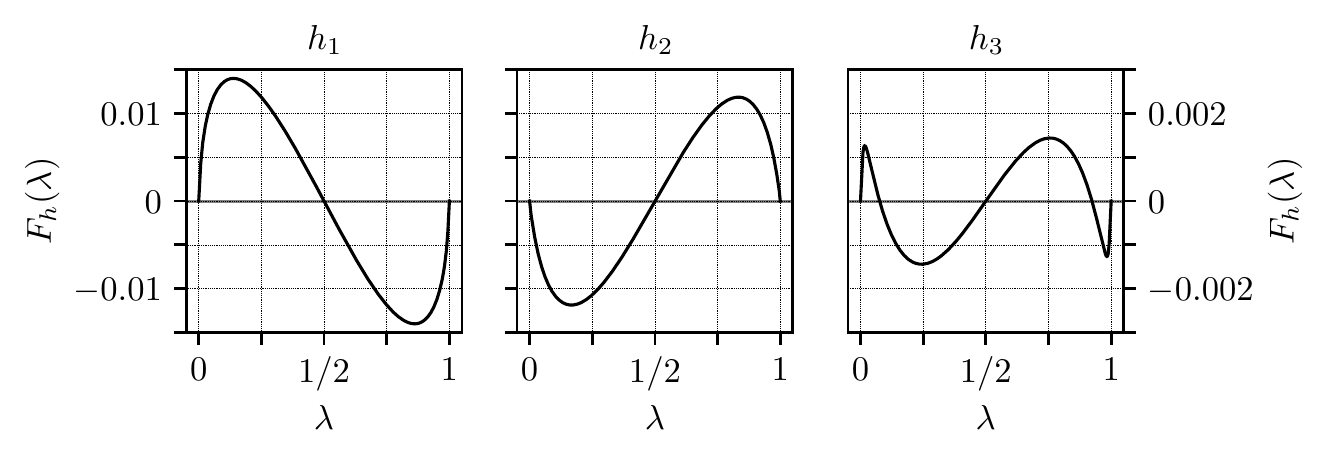}
	\caption{Three examples for $F_h(\ccr)$ for $N=4$ Active Rotators obeying \eref{eq:GeneralSystemPerturbed} with $\omega=0.8$ and $\kappa=-0.7$. The three choices for $h$ are $h_1:\phi\mapsto\sin2\phi$, $h_2:\phi\mapsto-\cos2\phi$, and $h_3=(3/5) h_1+(4/5) h_2$. Note the additional zeros for the case of $h_3$.}
	\label{fig:averagedperturbations4}
\end{figure}

We apply the averaging method from \sref{sec:Averaging} to the generalized active rotator model \eref{eq:ActiveRotatorPerturbed} to determine hyperbolic orbits. We focus on the case of $N=4$ rotators for which we are dealing with a single cross-ratio $\ccr=\Cr_{1,2,3,4}(\vec{\theta})$ and one-dimensional averaged dynamics $\dot{\ccr}=F_h(\ccr)$.

Concerning the hyperbolicity of the splay state, \fref{fig:averagedperturbations4} illustrates how $F_h$ vanishes at $\ccrt$ in accordance with statement 2. of theorem~\ref{prop:Averaging} and while $\DD F_h(\ccrt)\neq0$ so that $\ccrt$ becomes a hyperbolic fixed point for the averaged dynamics and $\mathcal{C}_{\epsilon,\ccrt}$ a robust periodic orbit. In the figure, we plot $F_h$ for three different choices of $h$, the last one being a linear combination of the former two. The splay state persists under all three perturbations. Note that from the numerical results, we have
\begin{equation*}
	\lim_{\ccr\to0}F_h(\ccr) = \lim_{\ccr\to1}F_h(\ccr) = 0
\end{equation*}
which, if we include these limit cases, indicates the existence of additional periodic orbits which must be clustered. Indeed, at least for the generalized active rotator model \eref{eq:ActiveRotatorPerturbed} with $N=4$, the values $\ccr=0$ and $\ccr=1$ correspond to periodic two-cluster states \cite{Ronge_Zaks_2021, Ronge_Zaks_2021_2} where the ensemble splits in two clusters of two units each. From \fref{fig:averagedperturbations4}, we read that these two-cluster states are also exponentially (un)stable periodic solutions, depending on the sign of $\epsilon$. Note that for the perturbation function $h_3$ we find two further zeros which indicate the existence of additional periodic orbits besides the splay state which are of broken spatio-temporal symmetry \cite{Ronge_Zaks_2021_2}. The graph of $F_{h_3}$ is a linear combination of the two graphs of from the first two Panels in \fref{fig:averagedperturbations4} just like $h_3$ is a linear combination of $h_1$ and $h_2$. This relation and its consequences on whether splay states are robust under generic perturbations are further elaborated in the supplementary material.

\section{Conclusion and outlook}
Families of periodic orbits are a common occurrence in Watanabe-Strogatz integrable systems. For the case that all involved common fields only depend on the Kuramoto order parameter, we showed that the union of these orbits further possesses the structure of a normally attracting invariant manifold and that one of the orbits is a splay state. To determine which of the periodic orbits survive for the perturbed system, we developed a criterion by applying the method of averaging to the dynamics on the NAIM.

One of the orbits that formed the invariant manifold of the unperturbed system features splay state dynamics due to equivariance under cyclic permutations. We showed that averaging over the splay state orbit of the unperturbed system yields a fixed point in the averaged dynamics for the perturbed system. Whether it also hyperbolic depends on the specific setup of the perturbed system and in particular on the choice for the perturbation function.

Applying our results to the active rotator model by Shinomoto and Kuramoto, our observations are two-fold: (i) We determined the critical coupling strength below which the model possesses the family of periodic orbits. (ii) Applying the averaging principle to the perturbed or generalized active rotator model, numerical evidence indicates that averaging over the unperturbed splay state indeed results in a hyperbolic fixed point for the averaged dynamics and thus the splay state is generically hyperbolic for ensembles of identical generalized active rotators. It also illustrates the versatility of the proposed procedure as an easy-to-implement tool to study perturbations to WS-integrable systems.

In restricting attention to the case of solely $Z$-dependent common fields $f$ and $g$, we excluded, \eg, systems of identically driven elements with external forcing \cite{Baibolatov_2009} or ensembles that consist of subpopulations \cite{Hong_2011}, both of which are WS-integrable. The question thus arises whether, and if, how the results, presented here, may be generalized to such and other WS-integrable systems.

\ack
We thank Zheng Bian, Jaap Eldering, and Edmilson Roque dos Santos for valuable discussions and comments. This work has been performed within the scope of the IRTG 1740/TRP 2015/50122-0 and funded by the DFG and FAPESP. TP was also supported in part by FAPESP Cemeai Grant No. 2013/07375-0, by Serrapilheira Institute (Grant No. Serra-1709-16124) and is a Newton Advanced Fellow of the Royal Society NAF\textbackslash R1\textbackslash180236.

%\appendix

\section*{References}

\includepdf[pages=-]{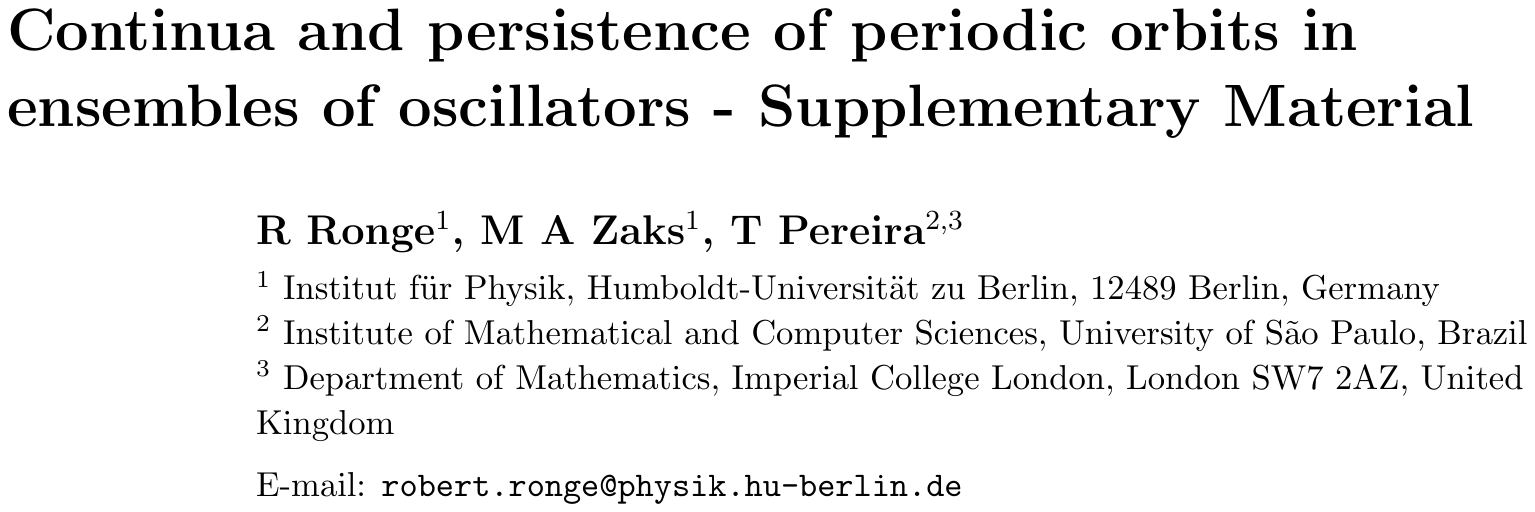}

\end{document}